\CompileMatrices\SelectTips{cm}{12}
\theoremstyle{plain}
\newtheorem{Thm}{\sc Theorem}[section]
\newtheorem{Theorem}[Thm]{\sc Theorem}
\newtheorem{Corollary}[Thm]{\sc Corollary}
\newtheorem*{Corollary*}{\sc Corollary}
\newtheorem{Proposition}[Thm]{\sc Proposition}
\newtheorem*{Proposition*}{\sc Proposition}
\newtheorem{Lemma}[Thm]{\sc Lemma}
\theoremstyle{definition}
\newtheorem{Definition}[Thm]{Definition}
\theoremstyle{remark}
\newtheorem{Remark}[Thm]{Remark}
\newtheorem{Remarks}[Thm]{Remarks}
\newtheorem*{Example*}{Example}
\newtheorem*{Remark*}{Remark}
\newcommand{\EE}{{\mathbb E}}
\newcommand{\cS}{{\cal S}}
\newcommand{\id}{{\mathop{\rm id}}}
\newcommand{\ZZ}{{\mathbb Z}}
\newcommand{\FF}{{\mathbb F}}
\newcommand{\Flag}{\mathop{\rm Flag}}
\newcommand{\Spec}{\mathop{\rm Spec}}
\newcommand{\gr}{\mathop{{\rm gr}}}
\renewcommand{\O}{{\cal O}}
\newcommand{\sO}{{\mathcal O}}
\newcommand{\A}{{\cal A}}
\newcommand{\U}{{\cal U}}
\newcommand{\NN}{{\mathbb N}}
\newcommand{\PP}{{\mathbb P}}
\newcommand{\QQ}{{\mathbb Q}}
\newcommand{\CC}{{\mathbb C}}
\newcommand{\M}{{\cal M}}
\newcommand{\ti}{\tilde}
\newcommand{\tors}{\mathop{{\rm tors}}}
\newcommand{\Hom}{{\mathop{{\rm Hom}}}}
\newcommand{\cHom}{{\mathop{{\cal H}om}}}
\newcommand{\GL}{\mathop{\rm GL}}
\newcommand{\Pic}{{\mathop{\rm Pic\, }}}
\mathchardef\mhyp="2D
\newcommand{\LL}{{\mathbb L}}
\newcommand{\ml}[2]{\begin{multline}\label{#1}#2 \end{multline}}
\newcommand{\ga}[2]{\begin{gather}\label{#1}#2 \end{gather}}
\begin{document}

\markboth {\rm Monodromy}{On $p$-curvature conjecture}

\title{On a positive equicharacteristic variant of the
$p$-curvature conjecture}
\author{H\'el\`ene Esnault, Adrian Langer}
\date{April 15, 2012}

\maketitle


\begin{abstract}
Our aim is to formulate and prove a weak form in equal characteristic $p>0$ of
the $p$-curvature conjecture.   We also show the existence of a counterexample
to a strong form
of it.
\end{abstract}

\let\thefootnote\relax\footnote{The first author is supported by  the SFB/TR45
and the ERC Advanced Grant 226257. The second author is supported by the Bessel
Award of the Humboldt Foundation and a Polish MNiSW grant
(contract number N N201 420639). }

\section*{Introduction}
If $(E,\nabla)$ is a vector bundle with an algebraic integrable
connection over a smooth complex variety $X$, then it is defined
over a smooth scheme $S$ over $\Spec \ZZ[\frac{1}{N}]$ for some
positive integer $N$,  so $(E,\nabla)=(E_S, \nabla_S)\otimes_S
\CC$ over $X=X_S\otimes_S \CC$ for a geometric generic point
$\QQ(S)\subset \CC$. Grothendieck-Katz's $p$-curvature conjecture
predicts that if for all closed points $s$ of some non-trivial
open $U\subset S$, the $p$-curvature of $(E_S,\nabla_S)\times_S s$
is zero, then $(E,\nabla)$ is trivialized by a finite \'etale
cover of $X$ (see e.g. \cite[Conj.3.3.3]{An}). Little is known
about it. N. Katz proved it for Gau{\ss}-Manin connections
\cite{Ka},  for $S$ finite over $\Spec \ZZ[\frac{1}{N}]$ (i.e., if
$X$ can be defined over a number field), D. V. Chudnovsky and G.
V. Chudnovsky in \cite{CC} proved it in the rank $1$ case and Y.
Andr\'e in \cite{An} proved it in case the Galois differential Lie
algebra of $(E, \nabla)$ at the generic point of $S$ is solvable
(and for extensions of connections satisfying the conjecture).
More recently, B. Farb and M. Kisin \cite{FK} proved it for
certain locally symmetric varieties $X$. In general, one is
lacking methods to think of the problem.

\medskip

Y. Andr\'e in \cite[II]{An} formulated the following  equal
characteristic $0$ analog of the conjecture:  if $X\to S$ is a
smooth morphism of smooth connected varieties defined over a
characteristic $0$ field $k$, then if $(E_S,\nabla_S)$ is a
relative  integrable connection such that for all closed points
$s$ of some non-trivial open $U\subset S$, $(E_S,\nabla_S)\times_S
s$ is trivialized by a finite \'etale cover of $X\times_S s$, then
$(E,\nabla)|_{X_{\bar \eta}}$ should be  trivialized by a finite
\'etale cover, where $\bar \eta$ is a geometric generic point and
$X_{\bar \eta}=X\times_S \bar \eta$. So the characteristic $0$
analogy to {integrable} connections is simply { integrable}
connections, and to the $p$-curvature condition is the
trivialization of the connection by a finite \'etale cover.
 He proved it \cite[Prop.~7.1.1]{An}, using Jordan's theorem  and
Simpson's moduli of flat connections.

\medskip

It is tempting to formulate an equal characteristic $p>0$  analog of Y.
Andr\'e's theorem. A main feature of integrable connections over a field $k$ of
characteristic
$0$ is
that they form an abelian, rigid, $k$-linear tensor category. In
characteristic $p>0$, the category of bundles with an integrable  connection is
only $\mathcal{O}_{X^{(1)}}$-linear, where
$X^{(1)}$ is the relative Frobenius twist of $X$, and the notion is too weak.
On the other hand, in characteristic $0$, the category of
bundles with a flat connection is the same as the category of ${\mathcal
O}_X$-coherent $\mathcal{D}_{X}$-modules. In characteristic $p>0$,
${\mathcal
O}_X$-coherent $\mathcal{D}_{X}$-modules over a smooth variety $X$ defined over
 a field $k$ form an abelian, rigid, $k$-linear tensor category (see
\cite{Gi}).  It is equivalent to the category of stratified
bundles. It bares strong analogies with the category of bundles
with an integrable  connection in characteristic $0$. For example,
if $X$ is projective smooth over an algebraically closed field,
the triviality of the \'etale fundamental group forces all such
${\mathcal O}_X$-coherent $\mathcal{D}_{X}$-modules to be trivial
(\cite{EM}).

\medskip

So we raise the {\bf question 1}: let $f: X\to S$ be a smooth
projective morphism of smooth connected varieties,  defined over
an algebraically closed characteristic $p>0$ field, let
$(E,\nabla)$ be a stratified bundle relative to $S$, such
that for all closed point $s$ of some non-trivial open $U\subset
S$, the stratified bundle $(E,\nabla)|_{X_s}$ is trivialized by a
finite \'etale cover of $X_s:=X\times_S s$. Is it the case that
the stratified bundle $(E,\nabla)|_{X_{\bar \eta}}$ is trivialized
by a finite \'etale cover of $X_{\bar \eta}$?.

\medskip

In this form, this is not true. Y. Laszlo \cite{Ls} constructed a one
dimensional non-trivial family of bundles over a curve over $\FF_2$ which is
fixed by the square of Frobenius, as  a (negative) answer to a question of J. de
Jong concerning the behavior of representations of the \'etale fundamental
group over a finite field $\FF_q, \  q=p^a$, with values in $GL(r, \FF((t)))$,
where $\FF\supset \FF_2$ is a finite extension. In fact, Laszlo's example yields
also a counter-example to the question as stated above. We explain this in
Sections \ref{s:strat} and  \ref{s:laszlo} (see
Corollary~\ref{counterexample}).  We remark that
if $E$ is a bundle on $X$, such that the  bundle $E|_{X_s}$ is
stable, numerically flat (see Definition~\ref{numflat})  and moves in the
moduli, then $E_{\bar \eta}$ cannot be trivialized by a finite \'etale cover
(see Proposition~\ref{Esnault}).
In contrast, we show that  if the family $X\to S$ is trivial (as it is in
Laszlo's example), thus $X=Y\times_k S$, if $k$ is algebraically closed, and if
$(F^n_{Y}\times {\rm identity}_s)^*(E)|_{Y\times_k s} \cong E|_{Y\times_k s}$ for all closed points
$s$ of
some non-trivial open in $S$ and some fixed natural number $n$, then the moduli
points of $E|_{Y\times_k s}$ are constant (see Proposition~\ref{reason}). Here
$F_Y: Y\to Y$ is the absolute Frobenius of $Y$. In Laszlo's example, one does
have  $(F^2_{Y}\times {\rm
identity}_s)^*(E)|_{Y\times_k s} \cong E|_{Y\times_k s}$ but only over $k=\FF_2$
(i.e., $S$ is also defined over $\FF _2$).
 When one extends the family to the algebraic closure of $\FF_2$, to go from
the
absolute Frobenius over $\FF_2$, that is the relative Frobenius over $k$, to
the absolute one, one needs to replace the power  $2$ with a higher power
$n(s)$, which depends on the field of definition
of $s$, and is not bounded.

\medskip

So we modify question 1 in {\bf question 2}:  let $f: X\to S$ be a
smooth projective morphism of smooth connected varieties,  defined
over an algebraically closed characteristic field $k$ of
characteristic $p>0$, let $E$ be a bundle such that for all closed
points $s$ of  some non-trivial open $U\subset S$, the  bundle
$E|_{X_s}$ is trivialized by a finite  Galois \'etale cover of
$X_s:=X\times_S s$ of order prime to $p$.
 Is it the case that the
 bundle $E|_{X_{\bar \eta}}$ is
trivialized
by a finite \'etale cover of $X_{\bar \eta}$?.

\medskip

The answer is nearly yes: it is the case if $k$ is  not
algebraic over its prime field (Theorem~\ref{GK-etale} 2)).  If
 $k=\bar{\FF}_p$, it might be wrong (Remarks~\ref{Remarks}
2), but what remains true is that there exists a finite \'etale
cover of $X_{\bar \eta}$ over which the pull-back of $E$ is a
direct sum of line bundles (Theorem~\ref{GK-etale} 1)). The idea
of the proof is borrowed from  the proof of Y. Andr\'e's theorem
\cite[Thm~7.2.2]{An}. The assumption on the degrees of the Galois
covers of $X_s$ trivializing $E|_{X_s}$ is necessary (as follows
from  Laszlo's example) and it allows us to apply Brauer-Feit's
theorem \cite[Theorem]{BF} in place of Jordan's theorem  used by
Andr\'e. However, there is no direct substitute for Simpson's
moduli spaces of flat bundles. Instead, we use the moduli spaces
constructed in \cite{La1} and we carefully analyze subloci
containing the points of interest, that is the numerically flat
bundles. The necessary material needed on moduli is gathered in
Section \ref{moduli}.

\medskip

Finally we raise the general {\bf question 3}: let $f: X\to S$ be a smooth
projective
morphism
of smooth connected varieties,  defined over an
algebraically closed characteristic $p>0$ field, let $(E, \nabla)$ be a
stratified bundle relative to $S$, such that
for all closed points $s$ of  some non-trivial
open $U\subset S$, the   bundle $E|_{X_s}$ is trivialized
by a finite  Galois \'etale cover of $X_s:=X\times_S s$ of order prime to $p$.
 Is it the case that the
 bundle $E|_{X_{\bar \eta}}$ is
trivialized
by a finite \'etale cover of $X_{\bar \eta}$?.

\medskip

 We give the following not quite complete answer. If the rank of
$E$ is $1$, (in which case the assumption on the degrees of the
Galois covers is {\it automatically fulfilled}), then the answer
is yes provided {$S$ is projective}, and for any $s\in U$,
$\Pic^\tau(X_s)$ is reduced (see
Theorem~\ref{stratified-line-bundles}). The proof relies on (a
variant of) an idea of M. Raynaud \cite{Ra}, using the height
function associated to a symmetric line bundle (that is the reason
for  our assumption on $S$) on the abelian scheme and its dual, to
show that an infinite Verschiebung-divisible point has height
equal to $0$ (Theorem~\ref{thm:height}) . If
$E$ has any rank, then the answer is yes if $k$ is not
{$\bar{\FF}_p$} (Theorem~\ref{stratified-vb} 2)).  In general,
there is a prime to $p$-order Galois cover of $X_{\bar \eta}$ such
that the pull-back of $E$ becomes a sum of stratified line bundles
(Theorem~\ref{stratified-vb} 1)).

\medskip

{\it Acknowledgements:} The first author thanks  Michel Raynaud
for the fruitful discussions in November  2009, which are
reflected in \cite{Ra} and in Section~\ref{Raynaud}. The first
author thanks Johan de Jong for a beautiful discussion in November
2010 on the content of \cite{EM}, where she suggested question $1$
to him, and where he replied that Laszlo's example should
contradict this, and that this should be better understood. The
second author would like to thank Stefan Schr\"oer for destroying
his naive hopes concerning N\'eron models of Frobenius twists of
an abelian variety. We thank Damian R\"ossler for discussions on
$p$-torsion on abelian schemes over functions fields. {We thank
the referee of a first version of the article. He/she explained to
us that the dichotomy in Theorem~\ref{GK-etale} 2) and in
Theorem~\ref{stratified-vb} 2) should be $\bar{\FF}_p$ or not
rather that countable or not, thereby improving our result.}

\section{Preliminaries on relative stratified sheaves} \label{s:strat}

Let $S$ be a scheme of characteristic $p$ (i.e., $\sO_S$ is an
$\FF _p$-algebra). By $F_S^r :S\to S$ we denote the $r$-th
\emph{absolute Frobenius morphism} of $S$ which corresponds to the
$p^r$-th power mapping on $\sO _S$.

If $X$ is an $S$-scheme, we denote  by $X^{(r)}_S$ the fiber
product of $X$ and $S$ over the $r$-th
Frobenius morphism of $S$. If it is clear with respect to which structure $X$ is
considered, we simplify the notation to
 $X^{(r)}$. Then
the $r$-th absolute Frobenius
morphism of $X$ induces the \emph{relative Frobenius morphism}
$F^r_{X/S}: X\to X^{(r)}$. In particular, we have the following
commutative diagram:
$$ \xymatrix{ &X\ar[rd]\ar@(ur,ul)[rr]^{\mbox{\scriptsize
$F_{X}^r$}}\ar[r]_{\mbox{\tiny $F_{X/S}^r$}}&
{X^{(r)}}\ar[d] \ar[r]_{W_X^r} & X\ar[d]\\
&&S\ar[r]_{\mbox{\scriptsize $F_S^r$}}&S\\}$$ which defines
$W_{X/S}^r: X^{(r)}\to X$.

Making $r=1$ and replacing   $X$ by $X^{(i)}$, this induces the
similar diagram
$$ \xymatrix{ &X^{(i)} \ar[rd]\ar@(ur,ul)[rr]^{\mbox{\scriptsize
$F_{X^{(i)}}$}}\ar[r]_{\mbox{\tiny $F_{X^{(i)}/S}$}}&
{X^{(i+1)}}\ar[d] \ar[r]_{W_{X^{(i)}}} & X^{(i)}\ar[d]\\
&&S\ar[r]_{\mbox{\scriptsize $F_S$}}&S\\}$$

We assume that $X/S$ is smooth. A \emph{relative stratified sheaf}
on $X /S$ is a sequence $\{E_i, \sigma_i\} _{i\in \NN}$ of locally
free coherent $\sO_{X^{(i)}}$-modules $E_i$ on $X ^{(i)}$ and
isomorphisms $\sigma _i: F_{X ^{(i)}/S}^*E_{i+1}\to E_i$ of
$\sO_{X^{(i)}}$-modules. A \emph{morphism of relative stratified
sheaves} $\{\alpha _i\}:\{E_i, \sigma_i\}\to \{E_i', \sigma_i'\}$
is a sequence of $\sO_{X^{(i)}}$-linear maps $\alpha_i: E_i\to
E_i'$ compatible with the $\sigma_i$, that  is  such that $\sigma
_i'\circ F^*_{X^{(i)}/S}\alpha _{i+1}=\alpha _i \circ \sigma_i$.

This forms a category ${\sf Strat}(X/S)$, which is contravariant
for morphisms $\varphi: T\to S$: to $\{E_i,\sigma_i\} \in {\sf
Start}(X/S)$ one assigns $\varphi^*\{E_i,\sigma_i\}\in {\sf
Strat}(X\times_S T/T)$ in the obvious way: $\varphi$ induces
$1_{X^{(i)}}\times \varphi: X^{(i)}\times_S T \to X^{(i)}$ and  $
(\varphi^*\{E_i,\sigma_i\})_i= \{(1_{X^{(i)}}\times \varphi)^*E_i,
      (1_{X^{(i)}}\times \varphi)^*(\sigma_i)      \}.     $

If $S=\Spec k$ where $k$ is a field, ${\sf Strat}(X/k)$ is an
abelian, rigid, tensor category. Giving a rational point $x\in
X(k)$ defines a fiber functor via $\omega_x: {\sf Strat}(X/k)\to
{\sf Vec}_k, \ \omega_x( \{E_i, \sigma_i\}    )= (E_0)|_{x}$ in
the category of finite dimensional  vector spaces over $k$, thus a
$k$-group scheme $\pi({\sf Strat}(X/k), \omega_x)={\rm
Aut}^{\otimes}(\omega_x)$. Tannaka duality implies that ${\sf
Strat}(X/k)$ is equivalent via $\omega_x$ to the representation
category of $\pi({\sf Strat}(X/k), \omega_x)$ with values in ${\sf
Vec}_k$. For any object $\EE:=\{E_i, \sigma_i\} \in {\sf
Strat}(X/k)$, we define its {\it monodromy group}
 to be the $k$-affine group scheme $\pi(\langle \EE \rangle, \omega_x)$, where
$\langle \EE \rangle \subset {\sf Strat}(X/k)$ is the full
subcategory spanned by $\EE$. This is the image of  $\pi({\sf
Strat}(X/k), \omega_x)$ in $GL(\omega_x(\EE))$
(\cite[Proposition~2.21 a)]{DM}). We denote by
$\mathbb{I}_{X/k}\in {\sf Strat}(X/k)$ the trivial object, with
$E^i=\sO_{X^{(i)}}$ and $\sigma_i={\rm Identity}$.
\begin{Lemma} \label{lem:finiteness}
With the notation above
\begin{itemize}
\item[1)] If  $h:  Y\to X$ is a finite \'etale cover such that
$h^*\EE$ is trivial, then $h_*\mathbb{I}_{Y/k}$ has finite
monodromy group and one has a faithfully flat homomorphism
$\pi(\langle h_* \mathbb{I}_{Y/k}\rangle, \omega_x) \to
\pi(\langle \EE \rangle, \omega_x) $. Thus in particular, $\EE$
has finite monodromy group as well.
\item[2)] If  $\EE\in {\sf Strat}(X/k)$ has finite monodromy group,
then there  exists a  $\pi(\langle \EE \rangle, \omega_x)$-torsor
$h: Y\to X$  such that $h^*\EE$ is trivial in ${\sf Strat}(Y/k)$.
Moreover, one has an isomorphism $\pi(\langle h_*
\mathbb{I}_{Y/k}\rangle, \omega_x) { \xrightarrow{\cong}}
\pi(\langle \EE \rangle, \omega_x) $.
\end{itemize}
\end{Lemma}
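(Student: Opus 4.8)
The plan is to argue entirely within the Tannakian formalism, turning both assertions into statements about inclusions of full tensor subcategories $\langle\,\cdot\,\rangle\subset{\sf Strat}(X/k)$ and feeding them into the dictionary of \cite[Prop.~2.21]{DM}, by which such an inclusion corresponds to a faithfully flat homomorphism of the associated monodromy groups. The single mechanism used throughout is that for a finite \'etale cover $g\colon Y\to X$ the unit $\O_X\to g_*\O_Y$ is injective and the projection formula $g_*g^*\F\cong\F\otimes g_*\mathbb{I}_{Y/k}$ holds; the only non-formal input, needed in part 2), is recorded in the last paragraph.

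For part 1), triviality of $h^*\EE$ means $h^*\EE\cong\mathbb{I}_{Y/k}^{\oplus\rk\EE}$, so applying $h_*$ and the projection formula exhibits $\EE\hookrightarrow h_*h^*\EE\cong(h_*\mathbb{I}_{Y/k})^{\oplus\rk\EE}$ via the injective unit. Hence $\EE\in\langle h_*\mathbb{I}_{Y/k}\rangle$, and \cite[Prop.~2.21]{DM} yields the asserted faithfully flat homomorphism $\pi(\langle h_*\mathbb{I}_{Y/k}\rangle,\omega_x)\to\pi(\langle\EE\rangle,\omega_x)$. To see the source is finite I would pass to the Galois closure $g\colon\tilde Y\to X$ with finite group $G$: since $Y\times_X\tilde Y$ is a disjoint union of copies of $\tilde Y$, the cover $g$ also trivializes $h_*\mathbb{I}_{Y/k}$, so the same subobject argument gives $h_*\mathbb{I}_{Y/k}\in\langle g_*\mathbb{I}_{\tilde Y/k}\rangle$. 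As $\tilde Y\to X$ is a $G$-torsor, descent identifies the stratified bundles it trivializes with ${\rm Rep}_k(G)$, under which $g_*\mathbb{I}_{\tilde Y/k}$ is the regular representation $k[G]$, a generator; thus $\pi(\langle g_*\mathbb{I}_{\tilde Y/k}\rangle,\omega_x)=G$ is finite, whence so are $\pi(\langle h_*\mathbb{I}_{Y/k}\rangle,\omega_x)$ and, via the surjection, $\pi(\langle\EE\rangle,\omega_x)$.

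For part 2), write $\Gamma=\pi(\langle\EE\rangle,\omega_x)$, a finite $k$-group scheme with $\langle\EE\rangle\simeq{\rm Rep}_k(\Gamma)$ by Tannaka duality. I would produce the torsor as $h\colon Y=\underline{{\rm Isom}}^{\otimes}(\omega_x\otimes_k\O_X,\ \omega_0)\to X$, where $\omega_0\colon\langle\EE\rangle\to\mathrm{Bun}(X)$ sends $\{E_i,\sigma_i\}$ to $E_0$ and $\omega_x\otimes_k\O_X$ is the constant fibre functor; this is a torsor under $\underline{{\rm Aut}}^{\otimes}(\omega_x\otimes_k\O_X)=\Gamma\times_k X$, and over it the two fibre functors become tautologically isomorphic, so $h^*\EE$ is trivial in ${\sf Strat}(Y/k)$. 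Since $Y\to X$ is a $\Gamma$-torsor, $h_*\mathbb{I}_{Y/k}$ is the object corresponding to the regular representation $k[\Gamma]$, whose monodromy is all of $\Gamma$; combining this with the faithfully flat homomorphism of part 1) applied to this $h$, the resulting surjection $\Gamma\to\Gamma$ of finite group schemes of equal order is an isomorphism, giving $\pi(\langle h_*\mathbb{I}_{Y/k}\rangle,\omega_x)\xrightarrow{\cong}\pi(\langle\EE\rangle,\omega_x)$.

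The main obstacle, and the only genuinely geometric point, is that in part 2) the $\Gamma$-torsor $h$ must be finite \emph{\'etale}, not merely finite flat: for a general finite $k$-group scheme an fppf torsor need not be \'etale. The hard part is therefore to know that the monodromy group scheme of a stratified bundle is smooth, so that finiteness forces $\Gamma$ to be \'etale over $k$ and $Y\to X$ to be a finite \'etale cover. This smoothness (due to dos Santos) is precisely what separates ${\sf Strat}(X/k)$ in characteristic $p$ from a purely formal Tannakian category and underlies the equivalence between finite monodromy and \'etale trivializability; it is the step I would justify with the greatest care.
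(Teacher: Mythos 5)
Your argument is correct and its skeleton is the paper's: the unit-plus-projection-formula embedding $\EE\hookrightarrow h_*h^*\EE\cong(h_*\mathbb{I}_{Y/k})^{\oplus r}$ feeding into \cite[Proposition~2.21~a)]{DM} for the faithfully flat homomorphism, a Tannakian construction of the trivializing torsor in 2), dos Santos' \'etaleness of finite stratified monodromy group schemes (\cite[Proposition~13]{dS}) as the one non-formal input — which you rightly flag as the crucial step making $Y\to X$ \'etale rather than merely finite flat — and an order/faithfulness count to upgrade the faithfully flat map to an isomorphism. Two genuine differences are worth recording. In 1), the paper never leaves $\langle h_*\mathbb{I}_{Y/k}\rangle$: it asserts that every rank-$r'$ object of that category embeds in $(h_*\mathbb{I}_{Y/k})^{\oplus r'}$ "by the same argument" and then applies the finiteness criterion \cite[Proposition~2.20~a)]{DM}; you instead pass to the Galois closure $g:\tilde Y\to X$ and identify the $g$-trivialized subcategory with ${\rm Rep}_k(G)$, in which $g_*\mathbb{I}_{\tilde Y/k}$ is the regular representation. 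Your detour buys robustness: the paper's shortcut implicitly needs $h$ itself to trivialize every object of $\langle h_*\mathbb{I}_{Y/k}\rangle$, which for non-Galois $h$ is exactly what your Galois-closure step supplies. In 2), your scheme $\underline{\rm Isom}^{\otimes}(\omega_x\otimes_k\sO_X,\omega_0)$ is the same object as the paper's Nori-style construction, namely $\Spec_X$ of the algebra object $\mathbb{A}$ corresponding to $k[G]$ under Tannaka duality; the one point where the paper is more careful is the triviality of $h^*\EE$ \emph{as a stratified bundle}: your tautological isomorphism only sees the level-zero functor $\omega_0$, i.e. trivializes $h^*E_0$, whereas the paper uses the stratification maps $\tau_i$ of $\mathbb{A}$ to identify $\Spec_{X^{(i)}}A^i$ with $Y^{(i)}$ and then runs the subobject argument ($h^*\EE\subset(h^*h_*\mathbb{I}_{Y/k})^{\oplus n}\cong$ trivial, and subobjects of trivial objects in a Tannakian category are trivial) to trivialize the whole tower; you should add this propagation step, but it is available verbatim in your setup, so this is a gloss rather than a gap.
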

\begin{proof}
We first prove 2).  Assume $\pi(\langle \EE \rangle, \omega_x)=:G$
is a finite group scheme over $k$. One applies Nori's method
\cite[Chapter~I,~II]{No}: the regular representation of $G$ on the
affine $k$-algebra $k[G]$ of regular function defines the Artin
$k$-algebra $k[G]$ as a $k$-algebra object of the representation
category of $G$ on finite dimensional $k$-vector spaces, (such
that $k\subset k[G]$ is the maximal trivial subobject). Thus by
Tannaka duality, there is an object $\mathbb{A}=(A^i, \tau_i)\in
{\sf Strat}(X/k)$, which is  an $\mathbb{I}_{X/k}$-algebra
object, (such that $\mathbb{I}_{X/k} \subset \mathbb{A}$ is the
maximal trivial subobject). We define $h_i: Y_i=\Spec_{X^{(i)}}
A^i \to X^{(i)}$. Then the isomorphism $\tau_i$ yields an
$\sO_{X^{(i)}}$-isomorphism between $Y^{(i)}\xrightarrow{h^{(i)}}
X^{(i)}$ and  $Y_i\xrightarrow{h_{i}} X^{(i)}$, (see, e.g.,
\cite[Expos\'e~XV,~\S~1,~Proposition 2]{SGA5}), and via this
isomorphism, $\mathbb{A}$ is isomorphic to $h_*\mathbb{I}_{Y/k}$.
On the other hand, $\omega_x(\EE)$ is a sub $G$-representation  of
$k[G]^{\oplus n}$ for some $n\in \mathbb{N}$, thus $\EE\subset
\mathbb{A}^{\oplus n}$ in ${\sf Strat}(X/k)$, thus there is an
inclusion $\EE\subset (h_* \mathbb{I}_{Y/k})^{\oplus n}$ in ${\sf
Strat}(X/k)$, thus $h^*\EE\subset (h^*h_*
\mathbb{I}_{Y/k})^{\oplus n}$ in ${\sf Strat}(Y/k)$. Since
$(h^*h_* \mathbb{I}_{Y/k})$ is isomorphic to $\oplus_{{\rm
length}_k k[G]} \mathbb{I}_{Y/k}$ in ${\rm Strat}(Y/k)$  (recall
that by \cite[Proposition~13]{dS}, $G$ is an \'etale group
scheme), then $h^*\EE$ is isomorphic to $\oplus_r
\mathbb{I}_{Y/k}$, where $r$ is the rank of $\EE$. This shows the
first part of the statement, and shows the second part as well:
indeed, $\EE$ is then a subobject of $\oplus_r
h_*\mathbb{I}_{Y/k}$, thus $\langle \EE\rangle\subset \langle
h_*\mathbb{I}_{Y/K}\rangle$ is a full subcategory. { One applies
\cite[Proposition~2.21~a)]{DM} to show that the induced
homomorphism $   \pi(\langle h_* \mathbb{I}_{Y/k}\rangle,
\omega_x) \to  \pi(\langle \EE \rangle, \omega_x)=G $ is
faithfully flat. So $ \pi(\langle h_* \mathbb{I}_{Y/k}\rangle,
\omega_x)$ acts on $\omega_x(h_*\mathbb{I}_Y)=k[G]$ via its
quotient $G$ and the regular representation $G\subset GL(k[G])$.
Thus the homomorphism is an isomorphism.}

We show 1). Assume  that there is a finite \'etale cover $h: Y\to
X$ such that $h^*\EE$ is isomorphic  in ${\sf Strat}(Y/k)$ to
$\oplus_r \mathbb{I}_{Y/k}$ where $r$ is the rank of $\EE$. Then
$\EE\subset \oplus_r h_*\mathbb{I}_{Y/k}$, thus $\pi(\langle
h_*\mathbb{I}_{Y/k} \rangle , \omega_x) \to \pi(\langle \EE
\rangle, \omega_x) $ is faithfully flat \cite[loc. cit.]{DM},
 so we are reduced to showing that { $\langle
h_*\mathbb{I}_{Y/k}\rangle $ has finite
monodromy.  But, by the same argument as on $\EE$,  any of its objects of rank
$r'$   lies in $\oplus_{r'} h_*\mathbb{I}_{Y/k}$.}
So we apply \cite[Proposition~2.20~a)]{DM}
to conclude that the monodromy of $h_*\mathbb{I}_{Y/k}$ is finite.
\end{proof}

\begin{Corollary} \label{cor:fin_bc}
With the notations as in \ref{lem:finiteness}, if $\EE\in {\sf
Strat}(X/k)$ has finite monodromy group, then for any field
extension $K\supset k$,  $\EE\otimes K \in {\sf Strat}(X\otimes
K/K) $ has finite monodromy group.

\end{Corollary}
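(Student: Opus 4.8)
The plan is to reduce the statement to the trivialization characterization of finite monodromy established in Lemma~\ref{lem:finiteness}, together with the elementary observation that \emph{being trivialized by a finite \'etale cover is stable under extension of the ground field}. Throughout I fix the rational point $x\in X(k)$ used to define the fiber functor $\omega_x$, and after base change I work with its image $x_K\in (X\otimes K)(K)$, which defines $\omega_{x_K}$. First I would apply part 2) of Lemma~\ref{lem:finiteness}: since $\EE$ has finite monodromy group, there is a finite \'etale cover $h\colon Y\to X$ with $h^*\EE\cong \oplus_r\mathbb{I}_{Y/k}$ in ${\sf Strat}(Y/k)$, where $r=\rk\EE$.

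Next I would base change along $\varphi\colon\Spec K\to\Spec k$. Write $X_K=X\otimes K$, $Y_K=Y\otimes K$ and $h_K=h\otimes K\colon Y_K\to X_K$. Since finite \'etale morphisms are stable under base change, $h_K$ is again a finite \'etale cover. The base-change functor $\varphi^*\colon{\sf Strat}(X/k)\to{\sf Strat}(X_K/K)$ described in the excerpt is exactly $\EE\mapsto\EE\otimes K$, and I claim it commutes with pullback along $h$, i.e. $\varphi^*(h^*\EE)\cong h_K^*(\varphi^*\EE)$. Granting this,
$$h_K^*(\EE\otimes K)\cong \varphi^*(h^*\EE)\cong \varphi^*(\oplus_r\mathbb{I}_{Y/k})\cong \oplus_r\mathbb{I}_{Y_K/K},$$
the last isomorphism holding because $\varphi^*$ sends the trivial object to the trivial object by its very definition (it pulls $\sO_{Y^{(i)}}$ back to $\sO_{Y_K^{(i)}}$ and identities to identities). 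Thus $\EE\otimes K$ is trivialized by the finite \'etale cover $h_K$.

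Finally I would invoke part 1) of Lemma~\ref{lem:finiteness}, now over the field $K$ with fiber functor $\omega_{x_K}$: since $h_K^*(\EE\otimes K)$ is trivial, the lemma yields a faithfully flat homomorphism $\pi(\langle (h_K)_*\mathbb{I}_{Y_K/K}\rangle,\omega_{x_K})\to\pi(\langle \EE\otimes K\rangle,\omega_{x_K})$ with finite source, whence $\EE\otimes K$ has finite monodromy group, as claimed.

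The one point requiring care -- and the only genuine obstacle -- is the compatibility $\varphi^*(h^*\EE)\cong h_K^*(\varphi^*\EE)$. Both functors are defined level by level by ordinary pullback of the locally free sheaves $E_i$, so this reduces to checking that the squares relating $Y^{(i)},X^{(i)}$ to their base changes $Y_K^{(i)},X_K^{(i)}$ are Cartesian and that the relative Frobenius morphisms $F_{X^{(i)}/k}$ commute with $\varphi$; these hold because forming the Frobenius twist $(-)^{(i)}$ and the relative Frobenius $F_{-/S}$ are compatible with base change in $S$. This is routine, but it must be spelled out carefully so that the trivializing isomorphisms $\sigma_i$ are transported compatibly, i.e. so that the level-wise isomorphisms genuinely assemble into an isomorphism of relative stratified sheaves over $K$.
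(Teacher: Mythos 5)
Your proof is correct and is precisely the argument the paper intends: the Corollary is stated without proof immediately after Lemma~\ref{lem:finiteness} because it follows by applying part 2) to obtain a trivializing finite \'etale cover, base changing that cover to $K$ (using that \'etale covers, Frobenius twists and relative Frobenius morphisms are compatible with base change of the ground field), and then applying part 1) over $K$. Your explicit attention to the compatibility $\varphi^*(h^*\EE)\cong h_K^*(\varphi^*\EE)$ is exactly the routine verification the paper leaves implicit.
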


Let $E$ be an $\sO _{X}$-module. We say that \emph{$E$ has a
stratification relative to $S$} if there exists a relative
stratified sheaf $\{E_i, \sigma_i\}$ such that $E_0=E$.

 Let us consider the special case $S =\Spec k$, where $k$
is a perfect field, and $X/k$ is smooth.
 An \emph{(absolute) stratified sheaf} on
$X$ is a sequence $\{E_i, \sigma_i\} _{i\in \NN}$ of coherent
$\sO_{X}$-modules $E_i$ on $X$ and isomorphisms $\sigma _i:
F_{X}^*E_{i+1}\to E_i$ of $\sO_{X}$-modules.

As $k$ is perfect, the $W_{X^{(i)}}$ are isomorphisms, thus giving
an absolute stratified sheaf is equivalent to giving a stratified
sheaf relative to $\Spec k$.

 We now go back to the general case and we assume that $S$
is an integral $k$-scheme, where $k$ is a field. Let us set
$K=k(S)$ and let $\eta : \Spec K\to S$ be the generic point of
$S$. Let us fix an algebraic closure $\bar K$ of $K$ and let $\bar
\eta$ be the corresponding generic geometric point of $S$.

By contravariance, a relative stratified sheaf $\{E_i, \sigma_i\}$
on $X /S$ restricts
 to a relative stratified sheaf
$\{E_i, \sigma_i\}|_{X_s} $ in fibers $X_s$ for  $s$ a point of
$S$. We are interested in the relation between $\{E_i,
\sigma_i\}|_{X_{\bar \eta}} $ and $\{E_i, \sigma_i\}|_{X_s} $ for
closed points $s\in |S|$. More precisely, we want to understand
under which assumptions the finiteness of $\langle \{E_i,
\sigma_i\}|_{X_s}\rangle$  for all closed points $s\in |S|$
implies the finiteness of $\langle \{E_i, \sigma_i\}|_{X_{\bar
\eta}}\rangle$. { Recall that finiteness} of $\EE \subset {\sf
Strat}(X_s)$ means that all objects of $\langle \EE \rangle$ are
subquotients in ${\sf Strat}(X_s)$ of { direct sums of a single
object,  } which is equivalent to saying that after the choice of
a rational point, the monodromy group of $\EE$ is finite
(\cite[Proposition~2.20 (a)]{DM}).

\medskip

Let $X$ be a smooth variety defined over $\FF_q$ with $q=p^r$. For
all $n\in \NN\setminus \{0\}$, one has the commutative diagram
\ga{1}{\xymatrix{\ar[dr] X \ar@(ur,ul)[rr]^{(F_X^r)^n=F_X^{rn}}
\ar[r]_{F^{rn}_{X/\FF_q}}
& \ar[d] X^{(rn)} \ar[r]_{W^{rn}_{X/\FF_{q}}} & \ar[d] X\\
& \Spec \FF_q \ar[r]_{F_{\FF_{q}}^{rn}=\id} & \Spec \FF_{q}} }
which allows us to identify $X^{(rn)}$ with $X$ (as an $\FF
_q$-scheme).

Let $S$ be an $\FF_q$ connected scheme, with field of constants
$k$, i.e. $k$ is the normal closure of $\FF_q$ in $H^0(S, \sO_X)$.
We define $X_S:=X\times_{\FF_q} S$.
\begin{Proposition} \label{prop1}
 Let $E$ be a vector bundle on $X_S$. Assume that there exists a positive
integer $n$ such that we have an isomorphism \ga{2}{ \tau:
((F^r\times_{\FF_q}\id_S)^n)^*E\simeq E .} Then $E$ has a natural
stratification $\EE _{\tau}=\{E_i, \sigma _i \}, \ E_0=E$ relative
to $S$.

\end{Proposition}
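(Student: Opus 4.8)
The plan is to recognize that the hypothesis \eqref{2} is exactly a periodicity of $E$ under the \emph{relative} Frobenius of $X_S/S$, and then to turn it into a stratification by the standard Frobenius-descent bookkeeping. The only genuine geometric input is the comparison of the intrinsic relative Frobenius with the geometric Frobenius $F^r\times_{\FF_q}\id_S$; everything after that is formal.

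First I would make the identification $X_S^{(rn)}\cong X_S$ explicit and compatible with the Frobenius tower. Since $X$ is defined over $\FF_q$, diagram \eqref{1} gives $F_{\FF_q}^{rn}=\id$, so that $X_S^{(rn)}=X_S\times_{S,F_S^{rn}}S$ is canonically identified with $X_S$. Under this identification I expect the $rn$-fold relative Frobenius $F^{rn}_{X_S/S}=F_{X_S^{(rn-1)}/S}\circ\cdots\circ F_{X_S/S}$ to become $(F^r\times_{\FF_q}\id_S)^n$, while $W^{rn}_{X_S/S}$ becomes $\id_X\times F_S^{rn}$; the verification is the factorization $F_{X_S}^{rn}=W^{rn}_{X_S/S}\circ F^{rn}_{X_S/S}$ of the absolute $q^n$-power Frobenius of $X_S$, read on functions as $f\otimes g\mapsto f^{q^n}\otimes g^{q^n}=(\id_X\times F_S^{rn})\circ(F_X^{rn}\times\id_S)$. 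Thus $\tau$ may be read as an isomorphism $(F^{rn}_{X_S/S})^*E\xrightarrow{\cong}E$. The same computation with $X_S^{(i)}$ in place of $X_S$ produces compatible identifications $X_S^{(i+rn)}\cong X_S^{(i)}$ for every $i$, under which the whole tower of relative Frobenii is $rn$-periodic.

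Next I would build the $E_i$. Writing $\Phi_j:=F_{X_S^{(j)}/S}\colon X_S^{(j)}\to X_S^{(j+1)}$, I set $E_0=E$ and, for $0<i<rn$, $E_i:=(\Phi_{rn-1}\circ\cdots\circ\Phi_i)^*E$ on $X_S^{(i)}$, then extend $rn$-periodically by $E_{i+rn}:=E_i$ through the identifications above. Each $\Phi_j$ is finite and faithfully flat ($X/S$ being smooth), so every $E_i$ is again locally free of the same rank. For the gluing data, associativity of pullback gives $\Phi_i^*E_{i+1}=(\Phi_{rn-1}\circ\cdots\circ\Phi_i)^*E=E_i$ canonically whenever $i\not\equiv0\pmod{rn}$, so I take $\sigma_i=\id$ there; for $i\equiv0$ the same associativity identifies $\Phi_i^*E_{i+1}$ with $(F^{rn}_{X_S/S})^*E$ via the first paragraph, and I take $\sigma_i=\tau$. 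These $\sigma_i$ are isomorphisms, are compatible with the periodic extension, and satisfy $E_0=E$, so $\EE_\tau:=\{E_i,\sigma_i\}$ is the desired relative stratified sheaf.

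The step I expect to be the main obstacle is the first one: verifying that the canonical identification $X_S^{(rn)}\cong X_S$ coming from $X$ being defined over $\FF_q$ intertwines $F^{rn}_{X_S/S}$ with $(F^r\times_{\FF_q}\id_S)^n$, and that these identifications are compatible along the whole tower, so that the periodic extension of the $E_i$ and $\sigma_i$ is well defined and the cocycle-type compatibility $\sigma_i'\circ F^*_{X_S^{(i)}/S}\alpha_{i+1}=\alpha_i\circ\sigma_i$ holds automatically. This is precisely where the hypothesis that the family $X_S=X\times_{\FF_q}S$ is constant and defined over $\FF_q$ enters; once it is in place, the construction of $\EE_\tau$ requires no further geometric input.
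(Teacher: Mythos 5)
Your construction is correct and is essentially the paper's own proof: the paper likewise uses the identification $X^{(rn)}\cong X$ (realized by $W^{rn}_{X/\FF_q}$, valid since $F^{rn}_{\FF_q}=\id$) to set $E_{rn}=(W^{rn}_{X/\FF_q}\times_{\FF_q}\id_S)^*E$, defines the intermediate $E_i$ by pulling back along the relative Frobenii with identity isomorphisms $\sigma_i$, inserts $\tau$ exactly at the indices divisible by $rn$, and extends $rn$-periodically by induction. The only immaterial difference is bookkeeping: you phrase the twists intrinsically as $X_S^{(i)}$ over $S$, whereas the paper writes them as $X^{(i)}\times_{\FF_q}S$ with $X^{(i)}$ twisted over $\FF_q$, the two being the same since relative Frobenius commutes with the base change $S\to\Spec\FF_q$.
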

\begin{proof}
  We define
\ga{3}{  E_{rn}=  (W^{rn}_{X/\FF_{q}}\times _{\FF _q} \id_S)^*E .
} Then we use the factorization \ga{4}{\xymatrix{\ar[drrrr]
X\ar[r]^{F_{X/\FF_q}} & \ar[drrr]
X^{(1)}\ar[r]^{F_{X^{(1)}/\FF_q}} & \cdots \ar[r]&
X^{(rn-1)}\ar[dr]
\ar[r]^{F_{X^{(rn-1)}/\FF_q}} & \ar[d] X^{(rn)}\\
 & & & & \Spec \FF_q
} } of $F_{X/\FF _q}^{rn}$ and we define \ga{5}{
E_{nr-1}=(F_{X^{(rn-1)}/\FF_q}\times_{\FF_q} \id_S)^*E_{rn},
\ldots, E_1=(F_{X^{(1)}/\FF_q}\times_{\FF_q} \id _S)^*E_2} with
identity isomorphisms $\sigma_{nr-1},\dots, \sigma _1$. Then we
use the isomorphism $\tau$ to define \ga{6}{\sigma _0: E\simeq
(F_{X/\FF_q}\times_{\FF _q} \id _S)^*E_1.} Assume we constructed
the bundles $E_i$ on $X^{(i)}$ for all $i\le arn $ for some
integer $a\ge 1$.

We now replace the diagram \eqref{1} by the diagram
\ga{7}{\xymatrix{\ar[dr] X^{(arn)}
\ar@(ur,ul)[rr]^{(F_{X^{(arn)}}^r)^n}
\ar[r]_{F^{rn}_{X^{(arn)}/\FF_q}}
& \ar[d] X^{((a+1)rn)} \ar[r]_{W^{rn}_{X^{(arn)}/\FF_{q}}} & \ar[d] X^{(arn)}\\
&\Spec \FF_q \ar[r]_{F_{\FF_{q}}^{rn}=1} & \Spec \FF_{q}} } We
then define \ga{8}{E_{(a+1)rn}=
(W^{rn}_{X^{(arn)}/\FF_{q}}\times_{\FF_q}\id_S)^*E_{arn} } (which
is equal to $E$ under identification of $X^{(arn)}$ with $X$).
Then we use the factorization \ga{9}{\xymatrix{\ar[drrrr]
X^{(arn)}\ar[r]^{F_{X^{(arn)}/\FF_q}} & \ar[drrr]
X^{(arn+1)}\ar[r]^{F_{X^{(arn+1)}/\FF_q}} & \cdots \ar[r]&
X^{((a+1)rn-1)}\ar[dr]
\ar[r]^{F_{X^{((a+1)rn-1)}/\FF_q}} & \ar[d] X^{((a+1)rn)}\\
 & & & & \Spec \FF_q
} } of $F_{X^{(arn)}/\FF _q}^{rn}$ to define \ml{10}{
E_{(a+1)rn-1}=(F_{X^{((a+1)rn-1)}/\FF_q}\times_{\FF_q}
\id_S)^*E_{(a+1)rn}, \ldots,\\
E_{arn+1}=(F_{X^{(arn+1)}/\FF_q}\times_{\FF_q} \id_S)^*E_{arn+2} }
with identity isomorphisms $\sigma_{(a+1)nr-1},\dots, \sigma
_{arn+1}$. Then we again use $\tau$ to define
\ga{11}{\sigma_{arn}: E_{arn}\simeq
(F^{rn}_{X^{(arn)}/\FF_q})^*E_{arn+1}.}
\end{proof}

\medskip

The above construction and \cite[Proposition 1.7]{Gi} imply

\begin{Proposition} \label{prop3}
Assume in addition to \eqref{2} that  $X$ is proper and
$\FF_q\subset k\subset \bar \FF_q$. Fix a rational point $x\in
X_S(k)$. Then for any closed point $s\in |S|$, the Tannaka group
scheme $\pi(\EE _{\tau _s}, \omega_{x\otimes_k k(s)})$ of $\EE
_{\tau _s}:=\EE_{\tau}|_{X_s}$ over the residue field $k(s)$ of
$s$ is finite.
\end{Proposition}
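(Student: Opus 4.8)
The plan is to identify the restricted stratification $\EE_{\tau_s}$ with the stratified bundle attached to a Frobenius-periodic vector bundle on the proper smooth $k(s)$-variety $X_s$, and then to quote Gieseker's criterion \cite[Proposition 1.7]{Gi}. Since $k(s)$ is a subfield of $\bar\FF_q$ it is perfect, so by the equivalence recorded above (the $W_{X^{(i)}}$ being isomorphisms over a perfect field) the relative stratification $\EE_{\tau_s}$ on $X_s/k(s)$ is the same datum as an absolute stratified bundle on $X_s$. Hence it suffices to produce, for $E_s:=E|_{X_s}$, an isomorphism $(F_{X_s}^N)^*E_s\simeq E_s$ with the absolute Frobenius of $X_s$ for some $N$, to check that the construction of Proposition~\ref{prop1} applied to it reproduces $\EE_{\tau_s}$, and to invoke Gieseker's result.

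First I would treat the case $k(s)=\FF_{q^m}$ finite. The heart of the matter is converting the partial (``relative over $S$'') Frobenius of \eqref{2} into the absolute Frobenius of the fibre. Restricting $\tau$ gives $((F_X^r\times\id)^n)^*E_s\simeq E_s$, where $F_X^r\times\id$ is the $q$-power Frobenius in the $X$-direction only. Because the arithmetic $q$-Frobenius of $\FF_{q^m}/\FF_q$ has order $m$, the $m$-th power $(F_X^r\times\id)^m$ restricted to $X_s$ equals the absolute Frobenius $F_{X_s}^{rm}$. Composing $\tau$ with its successive partial-Frobenius pullbacks $m$ times therefore yields $(F_{X_s}^{rmn})^*E_s\simeq E_s$, i.e. $E_s$ is Frobenius-periodic with $N=rmn$; inspecting the periodic construction of Proposition~\ref{prop1} I would verify that the absolute stratified bundle built from this isomorphism is exactly $\EE_{\tau_s}$. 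Then \cite[Proposition 1.7]{Gi}, which uses the properness of $X_s$, gives that the monodromy group of $\EE_{\tau_s}$ over $k(s)$ is finite.

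For the remaining case $k=\bar\FF_q$, where $k(s)=\bar\FF_q$ is infinite and $F_X^r\times\id$ is no longer a power of the absolute Frobenius of $X_s$, I would first descend. The finitely presented pair $(E_s,\tau_s)$ on $X\times_{\FF_q}\bar\FF_q$ is defined over some finite subfield $\FF_{q^m}\subset\bar\FF_q$, say by $(E_0,\tau_0)$; the previous step applied to $(E_0,\tau_0)$ shows that $\EE_{\tau_0}$ has finite monodromy over $\FF_{q^m}$. Since the construction of Proposition~\ref{prop1} commutes with base field extension, $\EE_{\tau_s}\cong\EE_{\tau_0}\otimes_{\FF_{q^m}}\bar\FF_q$, so Corollary~\ref{cor:fin_bc} yields finite monodromy over $k(s)=\bar\FF_q$ as well.

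I expect the main obstacle to be the bookkeeping in the first step: one must track the twists $X^{(i)}$ and the relative Frobenii $F_{X^{(i)}/\FF_q}$ through the fibrewise restriction and confirm that the absolute-Frobenius periodicity isomorphism one extracts induces precisely $\EE_{\tau_s}$, and not some twist of it. Concretely, the identifications $X^{(arn)}\cong X$ used in Proposition~\ref{prop1} rest on $F_{\FF_q}^{rn}=\id$, whereas over $\FF_{q^m}$ the relevant exponent is governed by the nontrivial arithmetic Frobenius $F_{\FF_{q^m}}^{rn}$, which is the identity only when $m\mid n$. This is exactly the discrepancy between the relative Frobenius over $k$ and the absolute Frobenius highlighted in the introduction, and it is what forces the factor $m$ (equivalently, the unbounded exponent) in the period $N=rmn$.
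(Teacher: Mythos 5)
Your reduction steps are sound and match the paper's strategy: descending to a finite field and invoking Corollary~\ref{cor:fin_bc} to reduce to a finite residue field, and then converting the partial Frobenius periodicity \eqref{2} into an absolute one, $(F_{X_s}^{rmn})^*E_s\simeq E_s$, with the period multiplied by the residue degree $m$, is exactly the arithmetic heart of the paper's proof (there it is packaged as the comparison of $\EE_{\tau}$ with $\EE_{\tau^{m}}$, $m=ab$). The genuine gap is in your final step. Gieseker's Proposition~1.7 in \cite{Gi} is a \emph{rigidity} statement, and that is how this paper uses it: on a proper variety, stratified bundles whose underlying bundles are termwise isomorphic are isomorphic as stratified bundles (in particular, a stratified bundle all of whose terms are trivial is the trivial stratified object); properness enters through finite-dimensionality of the Hom spaces. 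It is not a finiteness criterion, and citing it cannot turn the isomorphism $(F_{X_s}^{rmn})^*E_s\simeq E_s$ into finiteness of the Tannaka group of $\EE_{\tau_s}$.

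What is missing, and what constitutes the substance of the finiteness, is the \'etale-trivialization step followed by the Tannakian step. From Frobenius-fixedness of the whole stratification one trivializes $E_s$ \emph{and all the Frobenius descents} $E_i$ on the Lang torsor, a finite \'etale cover $h:Y\to X_s$ (this is the Lange--Stuhler mechanism, cf.\ Proposition~\ref{La-St} and \cite{LSt}), so that $h^*\EE_{\tau_s}$ is the trivial stratified bundle; then Lemma~\ref{lem:finiteness}~1) --- a stratified bundle trivialized by a finite \'etale cover has finite monodromy --- gives the conclusion. Neither ingredient appears in your write-up. Note also that the place where Gieseker's Proposition~1.7 genuinely earns its keep is your ``bookkeeping'' step: the stratification built from the composite isomorphism twists every $rmn$ steps while $\EE_{\tau_s}$ twists every $rn$ steps, and producing an isomorphism between them by hand requires exhibiting the intermediate comparison maps as Frobenius pullbacks (or arguing via shift-periodicity of the construction); the paper sidesteps this by observing that the two stratifications have termwise isomorphic underlying bundles and invoking rigidity. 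In short, your architecture is right, but the citations are swapped: rigidity belongs where you see only bookkeeping, and Lange--Stuhler plus Lemma~\ref{lem:finiteness} are needed where you cite rigidity.
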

\begin{proof}
The bundle $E$ is base changed of a bundle $E^0$ defined over
$X\times_{\FF_q} S_0$ for some form $S_0$ of $S$ defined over a
finite extension $\FF_{q^a}$ of $\FF_q$ such that $x$ is base
change of an $\FF_{q^a}$-rational point $x_0$ of $X\times_{\FF_q}
S_0$. We can also assume that $\tau$ comes by base change from
$\tau _0: ((F^r\times_{\FF_q}\id_{S_0})^n)^*E^0\simeq E^0$.
Proposition~\ref{prop1} yields then a relative stratification
$\EE^0_{\tau_0}=(E_i^0, \sigma_i^0)$ of $E^0$ defined over
$\FF_{q^a}$, with $E_i=E_i^0\otimes_{\FF_q^a} k$. A closed point
$s$ of $S=S_0\otimes_{\FF_q^a} k$ is a base change of some closed
point $s_0$ of $S_0$ of degree $b$ say over $\FF_{q^a}$.  By
Corollary~\ref{cor:fin_bc} we just have to show  that
$\pi(\EE_{(\tau_0) _{s_0}}, \omega_{x_0 \otimes_{\FF_{q^a}}
k(s_0)})$ is finite. So we assume that $k=\FF_{q^a}, \ S=S_0, \
s=s_0$. The underling bundles of  $\EE_{\tau}$ and $\EE_{\tau
^{m}}$ are by construction all isomorphic for $m=ab$. Thus by
\cite[Proposition~1.7]{Gi}, $\EE_{\tau}\simeq \EE_{\tau ^{m}}$ in
${\sf Strat}(X/k)$. But this implies that
$F^{mn}_{X\times_{\FF_{q^a}} s}(\EE_{\tau _s})\cong \EE _{\tau _s}
$. Thus $E$ is algebraically trivializable on the Lang torsor $h:
Y\to X\times_{\FF_{q^a}} \FF_{q^m}$ and the bundles $E_i$ are
trivializable on $Y\times_{X\times_{\FF_{q^a}} \FF_{q^m}}
X^{(i)}=Y^{(i)}/\FF_{q^m}$. Thus the stratified bundle $h^*\EE
_{\tau}$ on $Y$ relative to $\FF_{q^m}$ is trivial. We apply
Lemma~\ref{lem:finiteness} to finish the proof.
\end{proof}

\section{\'Etale trivializable bundles}

Let $X$ be a smooth projective variety over an algebraically
closed field $k$. Let $F_X: X\to X$ be the absolute Frobenius
morphism.

A locally free sheaf on $X$ is called \emph{\'etale trivializable}
if there exists a finite \'etale covering of $X$ on which $E$
becomes trivial.

Note that if $E$ is \'etale trivializable then it is numerically
flat {(see Definition~\ref{numflat} and the subsequent discussion)}. In
particular, stability and semistability for such bundles
are independent of a polarization (and Gieseker and slope
stability and semistability are equivalent). More precisely, such
$E$ is stable if and only if it does not contain any locally free
subsheaves of smaller rank and degree $0$ (with respect to some or equivalently
to any polarization).

\begin{Proposition} \emph{(see \cite{LSt})} \label{La-St}
If there exists a positive integer $n$ such that  $(F_X^n)^*E\simeq E$ then
$E$ is \'etale
trivializable. Moreover, if $k=\bar \FF _p$ then $E$ is \'etale trivializable if
and only
if there exists a positive integer $n$ and an isomorphism
$(F_X^n)^*E\simeq E$.
\end{Proposition}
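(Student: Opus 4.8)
The plan is to prove both directions of the equivalence, treating the general field $k$ and the special case $k = \bar{\FF}_p$ separately.

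\medskip

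For the first statement, suppose $(F_X^n)^* E \simeq E$ for some $n > 0$. The idea is to produce an explicit finite \'etale cover trivializing $E$ by means of a Lang-torsor construction applied to the Frobenius. The isomorphism $(F_X^n)^* E \simeq E$ means that $E$ is a fixed point of the $n$-th power of the Frobenius pullback operation on the moduli of bundles (or, more concretely, that $E$ descends along a suitable cover). First I would recall that $(F_X^n)^* E \simeq E$ allows one to equip $E$ with the structure of a sheaf with a Frobenius-semilinear automorphism of period $n$; such data is precisely what is needed to descend $E$ to a representation of the \'etale fundamental group via the theory of Lang's theorem. Concretely, the isomorphism endows the total space of the frame bundle of $E$ with a Frobenius-equivariant structure, and taking the fixed locus (a Lang-type torsor under a finite group, since $\GL_r$ over a finite field has finite Lang kernel) produces a finite \'etale cover $h : Y \to X$ over which $h^* E$ becomes trivial. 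This is essentially the argument already carried out in the proof of Proposition~\ref{prop3}, where the isomorphism $F^{mn}_{X}(\EE_{\tau_s}) \cong \EE_{\tau_s}$ is used to trivialize $E$ on a Lang torsor; I would cite \cite{LSt} for the precise statement and adapt that mechanism here.

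\medskip

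For the converse in the special case $k = \bar{\FF}_p$, suppose $E$ is \'etale trivializable, so there is a finite \'etale cover $h : Y \to X$ with $h^* E$ trivial. Since $X$ is defined over $\bar{\FF}_p$ and is of finite type, both $X$, the cover $Y \to X$, and the bundle $E$ are all defined over some finite field $\FF_q \subset \bar{\FF}_p$. The point is that $E$ being \'etale trivializable corresponds, via the Tannakian/monodromy dictionary of Lemma~\ref{lem:finiteness}, to a continuous representation of $\pi_1^{\et}(X)$ with \emph{finite} image, and any such representation factors through a finite quotient. Over a finite field $\FF_q$, the geometric Frobenius acts on this finite set of isomorphism classes (the representation has finite image, hence finitely many Frobenius twists), so some power of the absolute Frobenius fixes the isomorphism class of $E$: that is, $(F_X^n)^* E \simeq E$ for some $n$. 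The key mechanism is that the Frobenius pullback $F_X^*$ permutes the finite set of \'etale trivializable bundles with bounded invariants defined over a fixed finite field, and a permutation of a finite set has finite order.

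\medskip

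The main obstacle I expect is the converse direction, and specifically the finiteness input: one must argue that the set of isomorphism classes of bundles through which $E$ and all its Frobenius pullbacks range is genuinely finite, so that $F_X^*$ acts with finite orbit on this set. This is where the hypothesis $k = \bar{\FF}_p$ is essential and cannot be dropped — over a non-algebraic field there are continuous families and the Frobenius need not return to the starting isomorphism class, which is exactly the phenomenon exploited in Laszlo's counterexample. I would handle this by descending everything to a finite field $\FF_q$, observing that \'etale trivializable bundles are numerically flat with vanishing Chern classes, hence their isomorphism classes form a finite set over any fixed $\FF_q$ (boundedness of semistable bundles with fixed numerical invariants over a finite field, together with finiteness of $\FF_q$-points of the relevant moduli), and then invoking that an endomorphism of a finite set has a power equal to the identity. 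The delicate bookkeeping is ensuring the relevant moduli locus is truly finite rather than merely bounded, which is where I would lean on the moduli-theoretic results of \cite{La1} referenced in the introduction.
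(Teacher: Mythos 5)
The paper gives no proof of this proposition --- it is quoted from Lange--Stuhler \cite{LSt} --- so your attempt must be measured against the standard argument. Your first direction follows that standard mechanism correctly, if only in outline: an isomorphism $(F_X^n)^*E\simeq E$ makes the frame bundle of $E$ into a $\GL_r$-torsor fixed under $(F_X^n)^*$, and the Lang-torsor construction (this is exactly Proposition~\ref{Deligne} applied with $G=\GL_r$, as the paper itself notes after that proposition) produces a finite \'etale $\GL_r(\FF_{p^n})$-torsor $Y\to X$ on which $E$ becomes trivial. No objection there.

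The converse, however, has a genuine gap. Your argument rests on the claim that $F_X^*$ \emph{permutes} the finite set of isomorphism classes of \'etale trivializable rank-$r$ bundles defined over $\FF_q$, and later on the assertion that ``an endomorphism of a finite set has a power equal to the identity''. The latter is false for non-bijective self-maps, and bijectivity is precisely what you have not proved: Frobenius pullback is \emph{not} injective on isomorphism classes of bundles in general. For instance, on an ordinary elliptic curve over $\bar\FF _p$ one has $F_X^*L\simeq L^{\otimes p}$, so any nontrivial $p$-torsion line bundle $L$ satisfies $F_X^*L\simeq F_X^*\O_X$ while $L\not\simeq \O_X$. Without injectivity, finiteness of your set only yields eventual periodicity, $(F_X^{c+m})^*E\simeq (F_X^{c})^*E$ for some $c\geq 0$ and $m>0$, which does not return you to $E$ itself. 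Injectivity does hold on the subset of \'etale trivializable bundles, but the natural way to prove it is Galois descent to representations --- and once that dictionary is available, the entire moduli-theoretic detour becomes unnecessary. The standard (Lange--Stuhler) proof runs: choose a connected Galois cover $h:Y\to X$ with group $\Gamma$ trivializing $E$; since $H^0(Y,\O_Y)=k$, a descent datum for $\O_Y^{\oplus r}$ along $h$ is a constant cocycle, i.e.\ a homomorphism $\rho:\Gamma\to \GL_r(k)$ with $E\simeq E_\rho$; because $k=\bar\FF_p$, the finitely many matrix entries of $\rho$ lie in some $\FF_{p^n}$; and since the absolute Frobenius raises constants to their $p$-th powers, $F_X^*E_\rho\simeq E_{\rho^{(p)}}$ (entry-wise $p$-th powers), whence $(F_X^n)^*E\simeq E_{\rho^{(p^n)}}=E_\rho\simeq E$. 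This also locates exactly where $k=\bar\FF_p$ enters: over a larger algebraically closed field the cocycle can have transcendental entries, no Frobenius twist of $\rho$ returns to $\rho$, and that is Laszlo's counterexample --- the phenomenon you correctly anticipated but whose resolution your finiteness argument, as written, does not reach.
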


\begin{Proposition} \emph{(see \cite{BD})} \label{prop:BD}
If there exists a finite degree $d$ \'etale Galois covering $f:
Y\to X$ such that $f^*E$ is trivial and $E$ is stable, then one
has an isomorphism $\alpha: (F_X^d)^*E\simeq E$.
\end{Proposition}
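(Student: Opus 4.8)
The plan is to translate the geometric hypotheses into the representation theory of the Galois group $\Gamma=\mathrm{Gal}(Y/X)$ (of order $d$) by descent, exploiting that the absolute Frobenius is a natural transformation of the identity functor on $\FF_p$-schemes. The first, and crucial, observation is that $f$ trivialises not merely $E$ but all of its Frobenius pullbacks: since $F_X^n\circ f=f\circ F_Y^n$, one gets $f^*(F_X^n)^*E=(F_Y^n)^*f^*E=(F_Y^n)^*\sO_Y^{\oplus r}=\sO_Y^{\oplus r}$ for every $n\in\NN$, where $r=\rk E$. Hence each $(F_X^n)^*E$ is again \'etale trivialisable (in particular numerically flat), and differs from $E$ only through a change of descent datum along the fixed cover $f$.

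Next I would make the descent explicit. Assuming $Y$ connected, so that $H^0(Y,\sO_Y)=k$, a bundle on $X$ trivialised by $f$ is the same datum as a $k$-representation of $\Gamma$: to $E$ there corresponds $\rho\colon\Gamma\to\GL_r(k)$, and the $\Gamma$-equivariant structure on $\sO_Y^{\oplus r}$ underlying $(F_X^n)^*E$ is obtained by twisting the descent datum through the $p^n$-power Frobenius of $k$, i.e. $(F_X^n)^*E$ corresponds to the Frobenius twist $\rho^{(p^n)}$. Under this dictionary stability of $E$ is equivalent to irreducibility of $\rho$, since a proper locally free subsheaf of degree $0$ corresponds exactly to a proper subrepresentation; this is precisely the subsheaf criterion recalled just before the proposition.

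Now I would invoke finiteness. Because Frobenius twisting is induced by a field automorphism, it preserves irreducibility, so every $\rho^{(p^n)}$ is again irreducible; and the finite-dimensional algebra $k[\Gamma]$ has only finitely many isomorphism classes of simple modules. Therefore the family $\{(F_X^n)^*E\}_{n\ge 0}$ meets only finitely many isomorphism classes, and since $F_X^*$ acts bijectively on this finite set, $E$ lies in a finite $F_X^*$-orbit. This already recovers Proposition~\ref{La-St} in our situation and produces an isomorphism $(F_X^s)^*E\simeq E$ for some integer $s\ge 1$.

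The heart of the matter, and the step I expect to be the main obstacle, is to pin this exponent down to the degree $d$, that is, to prove $\rho^{(p^d)}\cong\rho$ and hence $(F_X^d)^*E\simeq E$. The finiteness argument by itself yields only \emph{some} period $s$ (the length of the Frobenius orbit of $\rho$); improving this to the specific value $d$ forces one to use $|\Gamma|=d$ in order to control the field of definition of the irreducible representation $\rho$ over $\FF_p$. Extracting exactly the exponent $d$ from the order of the Galois group is the delicate representation-theoretic point, and it is precisely this input that I would take from \cite{BD}.
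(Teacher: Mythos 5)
The paper contains no proof of this proposition at all: it is quoted from \cite{BD}, so there is no internal argument to measure your proposal against, and it has to be judged on the mathematics. Your first three paragraphs are correct and are exactly the classical Lange--Stuhler mechanism that underlies \cite{BD}: after reducing to $Y$ connected, bundles trivialized by $f$ correspond to representations $\rho\colon\Gamma\to\GL_r(k)$, Frobenius pullback corresponds to the twist $\rho^{(p)}$, stability of $E$ forces irreducibility of $\rho$, and since $k[\Gamma]$ has only finitely many simple modules the twist orbit of $\rho$ is finite. This correctly produces an isomorphism $(F_X^s)^*E\simeq E$ for some $s\ge 1$, with $s$ bounded by the number of simple $k[\Gamma]$-modules, hence $s\le d$.

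The step you defer to \cite{BD} --- upgrading ``some $s$'' to the exponent $d$ itself --- is not a gap you failed to close; it cannot be closed, because the statement with exponent exactly $d$ is false. Take $X$ an elliptic curve over $k=\bar\FF_3$ and $E=L$ a line bundle of exact order $7$: the associated Kummer cover $f\colon Y\to X$ is connected, \'etale, Galois of degree $d=7$, it trivializes $L$, and $L$ is stable. But $(F_X^7)^*L\simeq L^{\otimes 3^7}$ and $3^7\equiv 3 \pmod 7$, so $(F_X^7)^*L\simeq L^{\otimes 3}\not\simeq L$; the minimal exponent here is $s=\mathrm{ord}_7(3)=6$, which does not divide $d=7$. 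In your language: the Frobenius-twist orbit length of an irreducible representation of $\Gamma$ need not divide $|\Gamma|$, only be at most the number of simple modules. The true statement --- and the one your argument proves, and the only one the paper actually uses downstream (e.g.\ for the corollary that \'etale trivializable line bundles are torsion of order prime to $p$) --- is existential: \emph{there exists} $n\ge 1$ with $(F_X^n)^*E\simeq E$. So your instinct that the last step is ``the delicate point'' was exactly right; the honest conclusion is that the exponent $d$ in the proposition as printed (and likewise the specific exponent claimed in Corollary~\ref{control}) should be read as ``some positive integer,'' not as the degree of the covering, and your proof, stopped at the end of your third paragraph, is a complete proof of that corrected statement.
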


As a corollary we see that a line bundle on $X/k$ is \'etale
trivializable if and only if it is torsion of order prime to $p$.
One implication follows from the above proposition. The other one
follows from the fact that $(F_X^d)^*L\simeq L$ is equivalent to
$L^{\otimes (p^d-1)}\simeq \O_X$ and for any integer $n$ prime to
$p$ we can find $d$ such that $ p^d-1$ is divisible by $n$.

We recall that if $E$ is any vector bundle on $X$ such that there
is a $d\in \mathbb{N}\setminus \{0\}$ and an isomorphism $\alpha:
(F_X^d)^*(E)\cong E$, then $E$ carries an {\it absolute}
stratified structure $\EE_{\alpha}$, i.e. a stratified structure
relative to $\FF_p$ by the procedure of Proposition~\ref{prop1}.
On the other hand, any stratified stratified structure $\{E_i, \sigma_i\}$
relative to $\FF_p$ induces in an obvious way a stratified structure relative
to $k$:  the absolute Frobenius $F^n_X: X\to X$ factors through
$W^n_{X/k}: X^{(n)} \to X$, so $\{ (W^n_{X/k})^* E_n, (W^n_{X/k})^*\sigma_n\}$
is the relative stratified structure, denoted by  $\EE_{\alpha/k}$.
 Proposition~\ref{prop:BD}
together with Lemma~\ref{lem:finiteness} 2) show
\begin{Corollary} \label{control}
 Under the assumptions of Proposition~\ref{prop:BD}, we can take $d=
{\rm length}_k k[\pi(\langle \EE_{\alpha/k}  \rangle, \omega_x)]$.
\end{Corollary}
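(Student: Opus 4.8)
The plan is to exploit the isomorphism handed to us by Proposition~\ref{prop:BD} in order to manufacture a \emph{smaller} trivializing cover, namely the one attached to the monodromy group scheme of the associated stratified bundle, and then to feed that cover back into Proposition~\ref{prop:BD}.

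First I would record the output of Proposition~\ref{prop:BD}: the hypothesised degree $d$ Galois \'etale cover $f\colon Y_0\to X$, which trivializes the stable bundle $E$, produces an isomorphism $\alpha\colon (F_X^d)^*E\cong E$, hence the absolute stratified bundle $\EE_\alpha$ and its $k$-relative version $\EE_{\alpha/k}\in {\sf Strat}(X/k)$ with level-zero term $E$. Put $G:=\pi(\langle \EE_{\alpha/k}\rangle,\omega_x)$. The first thing to check is that $G$ is a finite group scheme. This does not quite follow from $f^*E$ being a trivial bundle, since $f$ only turns $\alpha$ into a constant (but possibly non-identity) automorphism and so need not trivialize the whole stratified datum; however, \'etale triviality of $E$ together with Proposition~\ref{La-St} supplies a finite \'etale cover on which all of the $E_i$ and all of the transition maps become trivial simultaneously — exactly the Lang-torsor trivialization used in the proof of Proposition~\ref{prop3} — so that Lemma~\ref{lem:finiteness}~1) applies and $G$ is finite.

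Knowing $G$ is finite, I would invoke Lemma~\ref{lem:finiteness}~2) to get a $G$-torsor $h\colon Y\to X$ with $h^*\EE_{\alpha/k}$ trivial in ${\sf Strat}(Y/k)$, so in particular $h^*E\cong \O_Y^{\oplus r}$. Since $G$ is an \'etale group scheme (\cite[Proposition~13]{dS}), $h$ is finite \'etale, and because $G$ is by construction the image of $\pi({\sf Strat}(X/k),\omega_x)$, the classifying map onto $G$ is surjective; hence $Y$ is connected and $h\colon Y\to X$ is a Galois cover with group $G$, of degree exactly ${\rm length}_k\, k[G]$. Applying Proposition~\ref{prop:BD} once more, now to the Galois cover $h$ of degree ${\rm length}_k\, k[G]$ trivializing the stable bundle $E$, yields an isomorphism $(F_X^{{\rm length}_k k[G]})^*E\cong E$, which is precisely the claim.

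The step I expect to be the main obstacle is pinning down the \emph{exact} degree of $h$, i.e.\ the connectedness of the torsor $Y$: a priori a $G$-torsor could be disconnected, in which case one would only obtain a Galois subcover of smaller degree and recover $d\mid {\rm length}_k k[G]$ rather than the asserted equality. The resolution is that $G$ is the honest image monodromy group, so the classifying homomorphism is surjective and $Y$ is connected; combined with $G$ being \'etale this forces $\deg h = {\rm length}_k k[G]$, which is what makes Proposition~\ref{prop:BD} return precisely the exponent ${\rm length}_k k[G]$.
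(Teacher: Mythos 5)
Your proposal is correct and is essentially the paper's own (one-line) proof, namely ``Proposition~\ref{prop:BD} together with Lemma~\ref{lem:finiteness}~2)'': produce $\alpha$ and hence $\EE_{\alpha/k}$, obtain from Lemma~\ref{lem:finiteness}~2) a $\pi(\langle \EE_{\alpha/k}\rangle,\omega_x)$-torsor of degree ${\rm length}_k\,k[\pi(\langle \EE_{\alpha/k}\rangle,\omega_x)]$ trivializing $E$, and feed that cover back into Proposition~\ref{prop:BD}. The two points you single out --- that finiteness of the monodromy needs the Lang-torsor trivialization of the whole stratified structure (not merely $f^*E\cong \O_{Y_0}^{\oplus r}$), and that the torsor is connected (the classifying map being onto the monodromy group), hence Galois of degree exactly ${\rm length}_k\,k[\pi(\langle \EE_{\alpha/k}\rangle,\omega_x)]$ --- are exactly the details the paper leaves implicit, and you resolve them correctly.
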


Let us also recall that there exist examples of \'etale
trivializable bundles such that $(F_X^n)^*E\not\simeq E$ for every
positive integer $n$ (see Laszlo's example in \cite{BD}).

\begin{Proposition} \label{Deligne} \emph{(Deligne;  see \cite[3.2]{Ls})}
Let $X$ be an { $\FF _{p^n}$}-scheme. If $G$ is a connected linear
algebraic group defined over a finite field $\FF _{p^n}$ then the
embedding $G(\FF _{p^n})\hookrightarrow G$ induces an equivalence
of categories between the category of $G(\FF _{p^n})$-torsors on
$X$ and $G$-torsors $P$ over $X$ with an isomorphism
$(F_X^n)^*P\simeq P$.
\end{Proposition}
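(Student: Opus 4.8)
The plan is to construct the two functors explicitly and check that they are mutually quasi-inverse; the only non-formal ingredient is the Lang--Steinberg theorem, and the connectedness hypothesis on $G$ is used precisely there. Throughout, write $\mathrm{Fr}\colon G\to G$ for the $\FF_{p^n}$-Frobenius of $G$, so that its fixed points are $G^{\mathrm{Fr}}=G(\FF_{p^n})$.

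First I would build the forward functor. To a $G(\FF_{p^n})$-torsor $Q$ on $X$ I associate the contracted product $P:=Q\times^{G(\FF_{p^n})}G$, the $G$-torsor obtained by pushing out along $G(\FF_{p^n})\hookrightarrow G$. To equip $P$ with an isomorphism $(F_X^n)^*P\simeq P$ I would use two observations. Since $\FF_{p^n}$ has $F^n_{\FF_{p^n}}=\id$, the morphism $F_X^n$ is the $n$-th relative Frobenius over $\FF_{p^n}$, hence a universal homeomorphism inducing the identity on the étale topos; as $G(\FF_{p^n})$ is a constant group scheme, this gives a canonical isomorphism $(F_X^n)^*Q\simeq Q$ of $G(\FF_{p^n})$-torsors (the inverse of the relative Frobenius of the étale scheme $Q/X$). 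Likewise, because $G$ is defined over $\FF_{p^n}$, the defining equations have coefficients fixed by $a\mapsto a^{p^n}$, so $(F_X^n)^*G_X\simeq G_X$ canonically as $X$-group schemes. Pullback commutes with contracted products, so $(F_X^n)^*P\simeq((F_X^n)^*Q)\times^{G(\FF_{p^n})}((F_X^n)^*G_X)\simeq Q\times^{G(\FF_{p^n})}G_X=P$; the equivariance of the first isomorphism uses exactly that $\mathrm{Fr}$ restricts to the identity on $G(\FF_{p^n})=G^{\mathrm{Fr}}$, which is where the choice of structure group enters. Functoriality in $Q$ is formal.

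Next I would construct the quasi-inverse. Given a $G$-torsor $P$ with $\phi\colon(F_X^n)^*P\xrightarrow{\sim}P$, form the $X$-endomorphism $\psi:=\phi\circ F_{P/X}^n\colon P\to P$, where $F_{P/X}^n\colon P\to(F_X^n)^*P$ is the $n$-th relative Frobenius of $P$ over $X$; since $F_{P/X}^n$ is equivariant for $\mathrm{Fr}$, the map $\psi$ is $\mathrm{Fr}$-semilinear, i.e. $\psi(p\cdot g)=\psi(p)\cdot\mathrm{Fr}(g)$. I would then set $\Psi(P,\phi):=P^{\psi}$, the fixed-point subscheme. The claim is that $P^{\psi}$ is a $G(\FF_{p^n})$-torsor, and this is checked étale-locally: trivializing $P\simeq G_X$ turns $\psi$ into $g\mapsto g_0\,\mathrm{Fr}(g)$ for some $g_0\in G(X)$, so that $P^{\psi}$ is the fiber over $g_0$ of the Lang map $g\mapsto g\,\mathrm{Fr}(g)^{-1}$ (up to the usual left/right conventions). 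By Lang--Steinberg, for $G$ connected this map is surjective and étale, with each fiber a $G(\FF_{p^n})$-torsor under right translation; hence $P^{\psi}$ is étale-locally, and therefore by descent globally, a $G(\FF_{p^n})$-torsor. This is the same Lang-torsor mechanism already invoked in the proof of Proposition~\ref{prop3}.

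Finally I would verify that $\Phi$ and $\Psi$ are quasi-inverse. For $\Psi\Phi$, the canonical Frobenius on $P=Q\times^{G(\FF_{p^n})}G$ has fixed locus $Q\times^{G(\FF_{p^n})}G^{\mathrm{Fr}}=Q$, recovering $Q$ naturally. For $\Phi\Psi$, the natural map $P^{\psi}\times^{G(\FF_{p^n})}G\to P$ is an isomorphism compatible with $\phi$, as one sees étale-locally from the Lang-torsor description, and it is clearly natural. I expect the genuine obstacle to be the middle step: identifying $P^{\psi}$ with a Lang fiber requires careful bookkeeping with the semilinearity of $\psi$ and with the Frobenius conventions, and connectedness of $G$ is indispensable here, since without surjectivity of the Lang map the fixed locus $P^{\psi}$ could fail to be a torsor. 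The remaining verifications are routine descent-theoretic bookkeeping.
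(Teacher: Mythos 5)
The paper offers no proof of this proposition: it is imported as a known result of Deligne via the citation to Laszlo \cite[3.2]{Ls}. Your argument is correct and is essentially the standard proof underlying that reference — pushout along $G(\FF_{p^n})\hookrightarrow G$ in one direction, and in the other the fixed-point scheme of the Frobenius-semilinear endomorphism $\psi=\phi\circ F^n_{P/X}$, identified \'etale-locally with a fiber of the Lang map, where connectedness of $G$ enters exactly through the Lang--Steinberg theorem.
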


In particular, if $G$ is a connected reductive algebraic group
defined over an algebraically closed field $k$ and $P$ is a
principal $G$-bundle on $X/k$ such that there exists an
isomorphism $(F_X^n)^*P\simeq P$  for some natural number $n>0$,
then there exists a Galois \'etale cover $f:Y\to X$ with Galois
group $G(\FF_{p^n})$ such that $f^*P$ is trivial.  Indeed, every
reductive group has a $\ZZ$-form so we can use the above
proposition.

\section{Preliminaries on relative moduli spaces of
sheaves\label{moduli}}

Let $S$ be a scheme of finite type over a universally Japanese
ring $R$. Let $f: X\to S$ be a projective morphism of $R$-schemes
of finite type with geometrically connected fibers and let
$\O_X(1)$ be an $f$-very ample line bundle.

A \emph{family of pure Gieseker semistable sheaves on the fibres
of} $X_T=X\times_S T\to T$ is a $T$-flat coherent
$\O_{X_T}$-module $E$ such that for every geometric point $t$ of
$T$ the restriction of $E$ to the fibre $X_t$ is pure (i.e., all
its associated points have the same dimension)  and Gieseker
semistable  (which is semistability with respect to the growth of
the Hilbert polynomial  of subsheaves defined by $\O_X(1)$ (see
\cite[1.2]{HL}). We introduce an equivalence relation $\sim $ on
such families in the following way. $E\sim E'$ if and only if
there exist filtrations $0=E_0\subset E_1\subset ... \subset
E_m=E$ and $0=E_0'\subset E_1'\subset ... \subset E_m'=E'$ by
coherent $\O_{X_T}$-modules such that $\oplus
_{i=0}^m{E_i/E_{i-1}}$ is a family of pure Gieseker semistable
sheaves on the fibres of $X_T$ and there exists an invertible
sheaf $L$ on $T$ such that {$\oplus _{i=1}^m{E_i'/E_{i-1}'}\simeq
\left(\oplus _{i=1}^m{E_i/E_{i-1}}\right)\otimes _{\O_T} L$.}

Let us define the moduli functor
$$\M _{P}(X/S) : (\hbox{\rm Sch/}S) ^{o}\to \hbox{Sets} $$
from the category of locally noetherian schemes over $S$ to the
category of sets by
$$\M _{P}(X/S) (T)=\left\{
\aligned
&\sim\hbox{equivalence classes of families of pure Gieseker}\\
&\hbox{semistable sheaves on the fibres of }T\times_S X\to T,\\
&\hbox{which have Hilbert polynomial }P.\\
\endaligned
\right\} .$$

Then we have the following theorem (see {
\cite[Theorem~0.2]{La1}}).

\begin{Theorem}  Let us fix a polynomial $P$. Then there
exists a projective $S$-scheme $M_P(X/S)$ of finite type over $S$
and a natural transformation of functors
$$\theta :\M _P(X/S)\to \Hom _S (\cdot, M_P(X/S)),$$
which uniformly corepresents the functor $\M _{P}(X/S)$. For every
geometric point $s\in S$ the induced map $\theta (s)$ is a
bijection. Moreover, there is an open scheme
$M^{s}_{X/S}(P)\subset M_{P}(X/S)$ that universally corepresents
the subfunctor of families of geometrically Gieseker stable
sheaves.
\end{Theorem}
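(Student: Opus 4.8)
The plan is to construct $M_P(X/S)$ as a GIT quotient of a suitable relative Quot scheme, following the Gieseker--Maruyama--Simpson strategy adapted to arbitrary (possibly mixed) characteristic and to a base over a universally Japanese ring; this is precisely the content of \cite[Theorem~0.2]{La1}. The first and deepest step is \emph{boundedness}: one must show that the set of pure Gieseker semistable sheaves with Hilbert polynomial $P$ on the geometric fibres of $X/S$ is bounded, uniformly in the fibre. In characteristic zero this follows from Grothendieck's lemma via a bound on $\mu_{\max}$, but in positive characteristic the restriction of a semistable sheaf to a hyperplane need not be semistable and Frobenius pullback can destabilize, so one needs the effective restriction theorems and the Bogomolov-type inequality for $\mu_{\max}$ valid in all characteristics. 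The crucial output is a single integer $m$, depending only on $P$ and on $X/S$ with its polarization, such that every such sheaf $E$ on any fibre is $m$-regular; then $E(m)$ is globally generated with $h^i(E(m))=0$ for $i>0$ and $h^0(E(m))=P(m)=:N$.

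Granting this, the second step realizes every semistable $E$ equipped with a chosen isomorphism $k^N\cong H^0(E(m))$ as a point of the relative Quot scheme $\quot(\sO_X(-m)^{\oplus N},P)$, which is projective over $S$ by Grothendieck. Openness of semistability cuts out a bounded open subscheme $R$, and the group $G=\GL(N)$ (acting through $\SL(N)$, resp. $\mathrm{PGL}(N)$) acts on $R$ by changing the framing, with two points lying in one orbit exactly when the quotient sheaves are isomorphic. The moduli space must therefore be the quotient $R/\!/G$.

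The third step is the GIT construction of this quotient. One embeds $R$ $\,G$-equivariantly into a Grassmannian of quotients of $H^0(\sO_X(m'-m))^{\oplus N}$ for $m'\gg m$, with its natural Pl\"{u}cker linearization, and one shows --- this is the heart of the argument --- that for $m'$ large enough a point of $R$ is GIT (semi)stable for this linearization if and only if the corresponding sheaf is Gieseker (semi)stable. The equivalence is proved through the Hilbert--Mumford numerical criterion, comparing the weight of a one-parameter subgroup with the Hilbert polynomials of the subsheaves generated by the corresponding subspaces of sections. Seshadri's extension of Mumford's GIT to bases over a universally Japanese ring (whose hypothesis guarantees finite generation of the invariants) then produces the projective $S$-scheme $M_P(X/S)$ together with the natural transformation $\theta$, and shows that it uniformly corepresents $\M_P(X/S)$; compatibility with the equivalence relation $\sim$ identifies the geometric points of the quotient with $S$-equivalence classes, so each $\theta(s)$ is a bijection. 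On the Gieseker stable locus the automorphisms are only scalars, so $\mathrm{PGL}(N)$ acts freely, the quotient is geometric and \emph{universally} corepresents the stable subfunctor, yielding the open subscheme $M^s_{X/S}(P)$. Because $m$ and the whole construction are uniform in the fibre, formation of the quotient commutes with base change to any geometric point $s$, which is what makes $\theta(s)$ a bijection onto the fibre.

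The main obstacle is the boundedness/uniform-regularity step: once a fibre-independent regularity bound $m$ is available everything else is essentially formal GIT, but producing such a bound in positive characteristic --- where naive restriction arguments fail --- requires the full strength of the $\mu_{\max}$ estimates and restriction theorems. A secondary subtlety is ensuring that all these bounds, and the matching of GIT with Gieseker stability, are uniform over $S$ rather than over a single field, so that the GIT quotient is compatible with arbitrary base change and $\theta(s)$ is a bijection for every geometric point, including those of positive characteristic.
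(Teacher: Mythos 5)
Your proposal is correct and coincides with the actual proof: the paper itself does not prove this theorem but quotes it as \cite[Theorem~0.2]{La1}, and that reference proceeds exactly along the lines you describe --- the key new input being boundedness via $\mu_{\max}$-estimates and restriction theorems valid in positive characteristic, followed by the standard Quot-scheme/GIT construction (Hilbert--Mumford matching of GIT and Gieseker (semi)stability, Seshadri's geometric reductivity over a universally Japanese base), yielding uniform corepresentability and the universally corepresenting open stable locus. So you have faithfully reconstructed the strategy of the cited source; there is nothing to correct.
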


Let us recall that $M_P(X/S)$ {\it uniformly corepresents}
$\M_P(X/S)$ means that for every flat base change $T\to S$ the
fiber product $ M_P(X/S) \times _S T$ corepresents the fiber
product functor $\Hom_S (\cdot , T)\times _{\Hom _S(\cdot
,S)}\M_P(X/S)$.   For the notion of corepresentability, we refer
to \cite[Definition 2.2.1]{HL}. In general, for every $S$-scheme
$T$ we have a well defined morphism $M_P(X/S)\times _S T\to M_P(X_T/T)$
which for a geometric point $T=\Spec k(s)\to S$ is bijection
on points.

 The moduli space $M_P(X/S)$ in general depends on the choice of
polarization $\O_X(1)$.

 \begin{Definition} \label{numflat}
Let $k$ be a field and let $Y$ be a projective $k$-variety. A
coherent $\O_Y$-module $E$ is called \emph{numerically flat}, if
it is locally free and both $E$ and {its dual $E^*=\mathcal{H}om
(E, \O_Y)$} are numerically effective on $Y\otimes \bar k$, where $\bar k$ is
an algebraic closure of $k$.
\end{Definition}

Assume that $Y$ is smooth. Then a numerically flat sheaf is
strongly slope semistable  of degree $0$ with respect to
any polarization (see \cite[Proposition 5.1]{La2}). But such a
sheaf has  a filtration with quotients which are numerically flat
and slope stable (see \cite[Theorem 4.1]{La2}). Let us recall that
a slope stable sheaf is Gieseker stable and any extension of
Gieseker semistable sheaves with the same Hilbert polynomial is
Gieseker semistable. Thus a numerically flat sheaf is Gieseker
semistable with respect to any polarization.

Let $P$ be the Hilbert polynomial of  the trivial sheaf of rank
$r$. In case $S$ is a spectrum of a field we  write $M_X(r)$ to
denote the subscheme of the moduli space $M_P(X/k)$ corresponding
to locally free sheaves. For a smooth projective morphism $X\to S$
we also define the moduli subscheme $M(X/S, r)\to S$ of the
relative moduli space $M_P(X/S)$ as a union of connected
components which contains points corresponding to numerically flat
sheaves of rank $r$. Note that in positive characteristic
numerical flatness is not an open condition.      More precisely,
on  a smooth projective variety $Y$  with an ample divisor $H$, a
locally free sheaf with {\it numerically trivial Chern classes,
that is with Chern classes  $c_i$ in the Chow group of codimension
$i$ cycles intersecting trivially $H^{{\rm dim}(Y)-i}$ for all
$i\ge 1$,} is numerically flat if and only if it is strongly slope
semistable (see \cite[Proposition~5.1]{La2}).

By definition for every family $E$ of pure Gieseker semistable
sheaves on the fibres of $X_T$ we have a well defined morphism
$\varphi _E=\theta ([E]): T\to M_P(X/S)$, which we call a
\emph{classifying morphism.}

\begin{Proposition}\label{semi-trivializability}
Let $X$ be a smooth projective variety defined over an
algebraically closed field $k$ of positive characteristic. Let $S$
be a $k$-variety and let $E$ be a rank $r$ locally free sheaf on
$X\times _k S$ such that for every $s\in S(k)$ the restriction
$E_s$ is Gieseker semistable with numerically trivial Chern
classes. Assume that the classifying morphism $\varphi _E:S\to
M_X(r)$ is constant and for a dense subset $S'\subset S(k)$ the
bundle $E_s$ is \'etale trivializable for $s\in S'$. Then
$E_{\bar \eta}$  is \'etale trivializable.
\end{Proposition}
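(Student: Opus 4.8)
The plan is to reduce to a family of \emph{unipotent} numerically flat bundles and then to recognise \'etale triviality as a Zariski closed condition on extension data. First I would fix a single point $s_0\in S'$ together with a finite \'etale cover $h\colon W\to X$, which I may take connected, such that $h^*E_{s_0}\cong\O_W^{\oplus r}$. Since $\varphi_E$ is constant, the semisimplification $\gr(E_s)$ is the same numerically flat bundle $G:=\gr(E_{s_0})$ for every $s\in S(k)$. Pulling back the Jordan--H\"older filtration of $E_{s_0}$ along $h$ exhibits $\O_W^{\oplus r}=h^*E_{s_0}$ as a successive extension of the bundles $h^*G_i$, where the $G_i$ are the stable factors of $G$; as $H^0(W,\O_W)=k$, a numerically flat slope-$0$ subbundle of a trivial bundle is again trivial, so each $h^*G_i$ is trivial and $G$ is trivialized by the fixed cover $h$. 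Consequently $h^*E_s$ has trivial graded, i.e.\ is a successive extension of copies of $\O_W$, for \emph{every} $s$. Pulling the whole family back along $h\times\id_S$, and using that $E_{\bar\eta}$ is \'etale trivializable iff $h^*E_{\bar\eta}$ is, I am reduced to a family $E'$ on $W\times_kS$ all of whose fibres are unipotent numerically flat bundles, with $E'_{s_0}$ trivial and $E'_s$ \'etale trivializable for the dense set $s\in S'$, the goal being to show $E'_{\bar\eta}$ is \'etale trivializable.

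Over a dense open $U\subseteq S$ the cohomology groups built from $H^\bullet(W,\O_W)$ are constant, so by cohomology and base change the canonical filtration of the unipotent family, by successive maximal trivial subbundles, globalises over $U$ with locally free graded pieces of constant rank. This packages $E'|_{W\times U}$ into a \emph{morphism} $\psi\colon U\to V$, where $V$ is the affine space of iterated extension data assembled from copies of $H^1(W,\O_W)$. Inside $V$ lies the locus $V_{\mathrm{good}}$ of classes whose associated unipotent bundle is \'etale trivializable. Over $\bar\FF_p$, Proposition~\ref{La-St} identifies $V_{\mathrm{good}}$ with the periodicity locus $(F_W^n)^*E'\cong E'$; since the absolute Frobenius acts $p$-linearly on $H^1(W,\O_W)$, and a finite-dimensional space with a $p$-linear endomorphism splits canonically into a bijective (unit-root, \'etale) part and a nilpotent part, $V_{\mathrm{good}}$ is the unit-root subspace, an $\FF_p$-rational linear, hence Zariski closed, subvariety. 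The Artin--Schreier description of this unit-root structure descends to the $\FF_p$-form of $W$, so the same closed locus should govern \'etale triviality over an arbitrary algebraically closed $k$.

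I would then conclude by density. The preimage $\psi^{-1}(V_{\mathrm{good}})$ is closed in $U$ and contains the dense set $S'\cap U$; as $U$ is irreducible it therefore contains the generic point of $S$. Evaluating $\psi$ at $\bar\eta$ places the extension class of $E'_{\bar\eta}$ in $V_{\mathrm{good}}$, so $E'_{\bar\eta}$, and hence $E_{\bar\eta}$, is \'etale trivializable.

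The hard part will be the closedness of $V_{\mathrm{good}}$. The naive socle filtration jumps at the special fibre $s_0$, which forces the passage to a dense open $U$ on which the graded ranks stabilise, and for rank $>2$ one must control the iterated, a priori varying, flags of the unipotent filtration rather than a single group $H^1$. Moreover over a general algebraically closed $k$ one cannot invoke the Frobenius-periodicity criterion of Proposition~\ref{La-St}, available only over $\bar\FF_p$; the decisive point to nail down is that the unit-root (\'etale) part is cut out by equations defined over the prime field, so that ``\'etale trivializable'' persists as a closed condition after base change to $k$ and the elementary density argument applies.
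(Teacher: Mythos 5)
Your opening reduction is sound, and it is a genuinely different opening from the paper's: since $\varphi_E$ is constant, every fibre has the same polystable graded $G=\gr (E_{s_0})$, and the Tannakian fact that a degree-zero numerically flat subsheaf (or subquotient) of a trivial bundle is trivial shows that $G$, hence every $\gr (E_s)$, is trivialized by the single cover $h$; so $h^*E_s$ is unipotent for all $s$. (The paper instead goes through the moduli space: in the stable case the \'etale-local universal bundle together with constancy of $\varphi_E$ makes $E$ the pullback of one fixed bundle on $X$; the general case is reduced to that stable case via relative flag schemes, Chevalley's theorem and a generically finite base change, and then Lemma~\ref{splitting} is invoked.) The problem is that everything after your reduction is missing or rests on claims that are not available.

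Concretely, two steps fail as stated. First, for rank $r>2$ there is no affine space $V$ ``of iterated extension data assembled from copies of $H^1(W,\O_W)$'': the extension group at stage $i$ is $\Ext^1(\O_W,\mathrm{Fil}_i)\simeq H^1(W,\mathrm{Fil}_i^{\vee})$, where $\mathrm{Fil}_i$ is the nontrivial unipotent bundle already built, which itself varies with the parameter; so the classifying morphism $\psi$, the locus $V_{\mathrm{good}}$, and above all its Zariski-closedness --- the single assertion on which your density argument rests --- are neither constructed nor proved. You flag this as ``the hard part'', but it is not a refinement of the proof; it \emph{is} the proof. Second, your description of $V_{\mathrm{good}}$, even for one extension class, is obtained by applying Proposition~\ref{La-St} over $\bar\FF_p$ and then ``descending to the $\FF_p$-form of $W$''. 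There is no such form: $X$, hence $W$, is an arbitrary smooth projective variety over an arbitrary algebraically closed field $k$ and need not be definable over any finite field. Moreover the equivalence ``\'etale trivializable $\Leftrightarrow$ Frobenius-periodic'' of Proposition~\ref{La-St} genuinely fails outside $\bar\FF_p$ (the paper recalls this right after Proposition~\ref{prop:BD}; a unipotent instance is the class $c_1\alpha_1+c_2\alpha_2$ with $\alpha_i$ Frobenius-fixed and $c_1/c_2$ transcendental, which is killed by the compositum of two Artin--Schreier covers but is never Frobenius-periodic), so it cannot be the bridge to general $k$. The fact you are really after --- a unipotent extension class is killed by a finite \'etale cover if and only if it lies in $H^1_{\et}(W,\ZZ/p)\otimes _{\FF_p}k\subset H^1(W,\O_W)$ --- is true over any algebraically closed $k$, but it must be proved directly (via Artin--Schreier theory and the observation that an \'etale-trivializable unipotent bundle comes from a representation of a finite $p$-group quotient of $\pi_1^{\et}(W)$), then extended to iterated extensions, and finally shown compatible with the base change from $k$ to $\bar K$; none of this is in the proposal.
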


\begin{proof}
If $E_s$ is stable for some $k$-point $s\in S$ then there exists
an open neighbourhood $U$ of $\varphi _E(s)$, a finite \'etale
morphism $U'\to U$ and a locally free sheaf $\U$ on $X\times _k
U'$ such that the pull backs of $E$ and $\U$ to $X\times _k
(\varphi _E ^{-1}(U)\times _U U')$ are isomorphic (this is called
existence of a universal bundle on the moduli space in the \'etale
topology). But $\varphi _E(S)$ is a point, so this proves that
there exists a vector bundle on  $X$ such that $E$ is its pull
back by the projection $X\times _k S\to X$. In this case the
assertion is obvious.

Now let us assume that $E_s$ is not stable for all $s\in S(k)$. If
$0=E_0^s\subset E_1^s\subset ... \subset E_m^s=E_s$ is a
Jordan--H\"older filtration  (in the category of slope
semistable torsion free sheaves), then by assumption the
isomorphism classes of semi-simplifications $\oplus
_{i=1}^m{E_i^s/E_{i-1}^s}$ do not depend on $s\in S(k)$. Let
$(r_1, ..., r_m)$ denote the sequence of ranks of the components
$E_i^s/E_{i-1}^s$ for some $s\in S(k)$. Since there is only
finitely many such sequences (they differ only by permutation), we
choose some permutation that appears for a dense subset
$S''\subset S'$.

Now let us consider the scheme of relative flags $f:\Flag (E/S;
P_1,..., P_m)\to S$, where $P_i$ is the Hilbert polynomial of
$\O_X^{r_i}$. By our assumption the image of $f$ contains $S''$.
Therefore by Chevalley's theorem it contains an open subscheme $U$
of $S$. Let us recall that the scheme of relative flags $\Flag
(E|_{X\times _k U}/U; P_1,..., P_m)\to U$ is projective. In
particular,  using Bertini's theorem ($k$ is algebraically closed)
we can find a generically finite morphism $W\to U$ factoring
through this flag scheme. Let us consider pull back of the
universal filtration $0=F_0\subset F_1\subset ...\subset F_m=E_W$
to $X\times _k W$. Note that the quotients $F^i=F_i/F_{i-1}$ are
$W$-flat and by shrinking $W$ we can assume that they are families
of Gieseker stable locally free sheaves (since by
assumption $F^i_s$ is Gieseker stable  and locally free for
some points $s\in W(k)\cap S'$). This and the first part of the
proof implies that $E_{\bar \eta}$ has a filtration by subbundles
such that the associated graded sheaf is \'etale trivializable. By
Lemma \ref{splitting} this implies that $E_{\bar \eta}$ is \'etale
trivializable.
\end{proof}

\section{Laszlo's example} \label{s:laszlo}

\medskip

Let us describe  Laszlo's example of a line in the moduli space of
bundles on a curve fixed by the second Verschiebung morphism (see
\cite[Section 3]{Ls}).

Let us consider a smooth projective genus $2$ curve $X$ over
$\FF_2$ with affine equation
$$y^2+x(x+1)y=x^5+x^2+x.$$
In this case the moduli space $ M_X(2, \O_X)$ of rank $2$ vector
bundles on $X$ with trivial determinant is an $\FF _2$-scheme
isomorphic to $\PP ^3$. The pull back of bundles by the relative
Frobenius morphism defines the Verschiebung map
$$V: M_{X^{(1)}}(2, \O_{X^{(1)}})\simeq \PP ^3\dashrightarrow M_X(2,{\O_X})\simeq \PP ^3$$
which in appropriate coordinates can be described as
$$[a:b:c:d]\to [a^2+b^2+c^2+d^2:ab+cd:ac+bd:ad+bc].$$
The restriction of $V$ to the line $\Delta\simeq \PP ^1$ given by
$b=c=d$ is an involution and it can be described as $[a:b]\to
[a+b:b]$.

Using a universal bundle on the moduli space (which exists locally
in the \'etale topology around points corresponding to stable
bundles) and taking a finite covering $S\to \Delta$  we obtain the
following theorem:

\begin{Theorem} \label{Laszlo} \emph{(\cite[Corollary 3.2]{Ls})}
There exist a smooth quasi-projective curve $S$ defined over some
finite extension of $\FF _2$ and  a locally free sheaf $E$ of rank
$2$ on $X\times  S$ such that $(F^2\times \id _S)^*E\simeq E$,
 $\det E\simeq \O_{X\times S}$ and the classifying morphism
$\varphi _E: S\to M_X(2, \O_X)$ is not constant. Moreover, one can
choose $S$ so that $E_s$ is stable for every closed point $s$ in
$S$.
\end{Theorem}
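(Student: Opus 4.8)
The plan is to realize the desired family as a twist of the pull-back of a universal bundle over a finite cover of the line $\Delta$, and to extract the Frobenius-invariance from the fact that $V|_\Delta$ is an involution. First I would pass to the stable locus. Classically the strictly semistable classes form a proper closed subset of $M_X(2,\O_X)\cong\PP^3$ (the Kummer surface of the Jacobian), so it meets the line $\Delta$ in only finitely many points; removing these gives a dense open $\Delta^s\subset\Delta$ consisting of stable bundles. Since a universal bundle exists in the \'etale topology around stable points, after replacing $\Delta^s$ by a suitable finite cover $S_0\to\Delta^s$ I obtain a locally free rank-$2$ sheaf $E_0$ on $X\times S_0$ whose classifying morphism $\varphi$ is the composite $S_0\to\Delta^s\hookrightarrow M_X(2,\O_X)$; this is dominant onto the curve $\Delta^s$, hence non-constant. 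By seesaw $\det E_0\cong p_{S_0}^*N$ for some $N\in\Pic(S_0)$, and after a further base change trivializing $N$ and twisting $E_0$ by a line bundle from the base I may assume $\det E_0\cong\O_{X\times S_0}$.

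Next I would identify the operator governing the classifying map of $(F^2\times\id)^*E_0$. Over $\FF_2$ the relative Frobenius coincides with the absolute Frobenius $F_X$ and $X^{(1)}\cong X$, so the Verschiebung $V$ is exactly ``pull-back by $F_X$'', and $V^2$ is pull-back by $F_X^2$. A direct check on coordinates shows $V$ maps $\Delta$ into $\Delta$ and restricts there to the morphism $[a:b]\mapsto[a+b:b]$ of $\PP^1$, which is an involution in characteristic $2$; in particular $V$ is a morphism on all of $\Delta$, so $(F_X^2)^*\mathcal{E}$ is semistable for every $[\mathcal{E}]\in\Delta$. Hence for each $s$ the bundle $((F^2\times\id)^*E_0)_s=(F_X^2)^*(E_0)_s$ is semistable with moduli point $V^2(\varphi(s))=\varphi(s)$; since $(E_0)_s$ is stable and a stable moduli point has a unique semistable representative, these two bundles are isomorphic. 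Thus $E_0$ and $(F^2\times\id_{S_0})^*E_0$ are families of stable bundles with the same classifying morphism.

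It remains to upgrade this fibrewise agreement to a global isomorphism. By uniqueness of the universal family up to twisting by a line bundle pulled back from the base, $(F^2\times\id_{S_0})^*E_0\cong E_0\otimes p_{S_0}^*L$ for some $L\in\Pic(S_0)$. Taking determinants and using $\det E_0\cong\O_{X\times S_0}$ gives $p_{S_0}^*L^{\otimes 2}\cong\O_{X\times S_0}$, hence $L^{\otimes 2}\cong\O_{S_0}$; that is, $L$ is $2$-torsion. I then pass to a finite cover $S\to S_0$ on which $L$ becomes trivial, let $E$ be the pull-back of $E_0$ (renormalized so that $\det E\cong\O_{X\times S}$), and observe that on $X\times S$ one has $(F^2\times\id_S)^*E\cong E$, the classifying morphism stays non-constant (every cover is dominant onto $\Delta^s$), and every fibre $E_s$ is stable. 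Removing the finitely many bad points and taking a smooth model of $S$ produces the smooth quasi-projective curve required, over a finite extension of $\FF_2$.

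The main obstacle is the last step: controlling the line-bundle ambiguity $L$ of the universal family, and the determinant, so as to convert the fibrewise isomorphism into an honest isomorphism of families. The computation that $L$ is $2$-torsion is what makes this tractable, but one must be careful in characteristic $2$, where killing a $2$-torsion line bundle corresponds to a $\mu_2$-cover that need not be \'etale; since the theorem only asks for $S$ to be a smooth quasi-projective curve, a possibly ramified Kummer cover followed by normalization suffices.
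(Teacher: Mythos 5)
Your proposal is correct and follows essentially the same route as the paper, which itself only recalls Laszlo's setup (the Verschiebung formula and the invariant line $\Delta$ on which $V$ restricts to an involution) and then cites \cite[Corollary 3.2]{Ls} for exactly the construction you spell out: an \'etale-local universal bundle over the stable part of $\Delta$, a finite cover, and elimination of the line-bundle ambiguity, whose $2$-torsion you correctly identify and kill by a (necessarily inseparable, in characteristic $2$) cover --- note that the cleanest such cover is pullback along the absolute Frobenius of the base, since $F_{S_0}^*L\simeq L^{\otimes 2}\simeq \O_{S_0}$, which avoids any normalization step. One inaccuracy to repair: the unique indeterminacy point of $V$ (the Frobenius-destabilized ``Gunning'' point, which in these coordinates is $[1:1:1:1]$) lies \emph{on} $\Delta$, so the fact that the coordinate formula extends to a morphism $[a:b]\mapsto[a+b:b]$ of $\PP^1$ does not imply that $(F_X^2)^*E$ is semistable for \emph{every} $[E]\in\Delta$; this is harmless, because you only need the fibrewise identification over a dense open subset of $\Delta$, and your construction deletes finitely many bad points anyway. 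Similarly, a finite cover cannot ``trivialize'' a base line bundle $N$ of nonzero degree; instead either shrink $S_0$ Zariski-locally so that $N$ becomes trivial, or pass to a cover on which $N$ becomes a square and twist --- either suffices for the determinant normalization.
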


Now note that the map $(F_X)^*: M_X(2,{\O_X})\dashrightarrow
M_X(2,{\O_X})$ defined by pulling back bundles by the absolute
Frobenius morphism can be described on $\Delta$ as  $[a:b]\to
[a^2+b^2:b^2]$. In particular, the map $(F_X^{2n})^*|_{\Delta}$ is
described as $[a:b]\to [a^{2n}, b^{2n}]$. It follows that if a
stable bundle $E$ corresponds to a modular point of $\Delta
(\FF_2^{n})\backslash \Delta (\FF_2^{n-1})$ (or, equivalently, $E$
is defined over $\FF_{2^{n}}$) then $(F_X^{2n})^*E\simeq E$ and
$(F_X^{m})^*E\not\simeq E$ for $0<m<2n$.

This implies that for $k=\bar \FF_2$ and for every $s\in S(k)$,
the bundle $E_s$ which is the restriction to $X\times_{\FF_2} s$
of the  bundle $E$ from Theorem \ref{Laszlo}, is \'etale
trivializable.

\medskip

Let $X,S$ be varieties defined over an algebraically closed field
$k$ of positive characteristic. Assume that $X$ is projective. Let
us set $K=k(S)$. Let $\bar \eta$ be a generic geometric point of
$S$.

\begin{Proposition} \label{Esnault}
Let $E$ be a   bundle on   $X_S=X\times _k S\to S $  which is
numerically flat on the closed fibres of $X_S=X\times _k S\to S $.
Assume that for some $s\in S$ the bundle $E _s$ is stable and the
classifying morphism $\varphi _E: S\to M_X(r)$ is not constant.
Then $E_{\bar \eta}=E|_{X_{\bar \eta}}$ is  not \'etale
trivializable.
\end{Proposition}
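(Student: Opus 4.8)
The plan is to argue by contradiction: assuming that $E_{\bar\eta}$ is \'etale trivializable, I will produce a \emph{uniform} Frobenius period over a dense open of $S$ and deduce that $\varphi_E$ is constant, contradicting the hypothesis. First I would record that geometric stability is an open condition in the flat family $E$ on $X_S\to S$; since $E_s$ is stable for some $s$, this open locus is nonempty, hence dense, so it contains the generic point. In particular $E_{\bar\eta}$ is stable, and $E_s$ is stable for every $s$ in a dense open $U\subseteq S$.

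Next I would spread out the trivialization. If $E_{\bar\eta}$ is \'etale trivializable, choose a finite \'etale cover $h\colon Y\to X_{\bar\eta}$ with $h^*E_{\bar\eta}$ trivial; passing to the Galois closure I may assume $h$ is Galois of some degree $N$. The cover $h$, together with the isomorphism $h^*E_{\bar\eta}\cong\sO^{\,r}$, is defined over a finite extension of $K=k(S)$ and therefore spreads out: after replacing $S$ by a connected finite cover $S'\to S$ and shrinking to a dense open, I obtain a finite \'etale Galois cover $\mathcal Y\to X_{U}$ of degree $N$ whose restriction to each fibre $X_s$ (for $s\in U(k)$) is a finite \'etale Galois cover of degree $N$ trivializing $E_s$. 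Replacing $S$ by $S'$ is harmless: the new classifying morphism is $\varphi_E$ precomposed with the dominant map $S'\to S$, so its non-constancy is equivalent to that of $\varphi_E$.

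The key point is that the period is now uniform. For every $s\in U(k)$ the bundle $E_s$ is stable and is trivialized by a Galois \'etale cover of the \emph{fixed} degree $N$, so Proposition~\ref{prop:BD} yields an isomorphism $(F_X^N)^*E_s\cong E_s$ with the \emph{same} $N$ for all such $s$. By Proposition~\ref{Deligne} applied to $\GL_r$ (which has a $\ZZ$-form, so is defined over $\FF_{p^N}$), every stable bundle $E$ on $X$ with $(F_X^N)^*E\cong E$ is trivialized by a Galois \'etale cover with the finite group $\GL_r(\FF_{p^N})$; since $X$ is projective, its \'etale fundamental group $\pi_1^{\et}(X)$ is topologically finitely generated, whence there are only finitely many such covers and only finitely many such bundles up to isomorphism. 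Thus $\{[E_s]:s\in U(k)\}$ is a finite subset of $M_X(r)$. Because $U(k)$ is dense in $S$, the morphism $\varphi_E$ is continuous, and $S$ is irreducible, the image $\varphi_E(S)$ is contained in this finite set and is irreducible, hence a single point. So $\varphi_E$ is constant, a contradiction.

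The main obstacle is obtaining the uniform Frobenius period across the fibres. This is exactly what fails in Laszlo's example, where all closed fibres $E_s$ are \'etale trivializable but only with unbounded periods $n(s)$, so no single power of $F_X$ and no finiteness statement is available; accordingly the generic fibre there is \emph{not} \'etale trivializable, consistent with the Proposition. What rescues uniformity here is the combination of stability of $E_{\bar\eta}$ with the boundedness in Proposition~\ref{prop:BD}: spreading out a single Galois cover of fixed degree $N$ forces the period $N$ to be independent of $s$, and Proposition~\ref{Deligne} then converts this uniform period into the decisive finiteness of the set of possible moduli points. I expect the only delicate bookkeeping to be the spreading-out step (descending $h$ and its trivialization to a model over a finite cover of $S$ and checking that the Galois and \'etale properties persist on a dense open fibrewise), which is routine once the uniformity is isolated as above.
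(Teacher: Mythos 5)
Your argument is correct in substance, but it takes a genuinely different route from the paper's. The paper never needs a uniform Frobenius period: since $X_S=X\times_k S$ is a product and $X$ is proper over the algebraically closed field $k$, the base-change isomorphism $\pi_1(X)\xrightarrow{\cong}\pi_1(X_{\bar K})$ lets one descend the trivializing cover of $X_{\bar\eta}$ to a \emph{single} finite \'etale cover $\pi\colon Y\to X$ defined over $k$; spreading out only the trivialization isomorphism, one gets a finite $T\to U$ such that $\pi^*E_t$ is trivial for all $t\in T(k)$, so every such $E_t$ embeds in the one fixed bundle $\pi_*\O_Y^{\oplus r}$. Flat base change along the cartesian Frobenius square for the \'etale $\pi$ gives $F_X^*(\pi_*\O_Y)\simeq\pi_*\O_Y$, so $\pi_*\O_Y$ is strongly semistable of degree $0$, and each stable $E_t$ is then a Jordan--H\"older factor of $\pi_*\O_Y^{\oplus r}$; uniqueness of the JH graded object gives the finiteness of moduli points at once, with no appeal to Proposition~\ref{prop:BD}, Proposition~\ref{Deligne}, or Lang--Serre. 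You instead spread the cover out relatively over a finite cover of $S$, use the resulting bounded fibrewise degree plus stability to extract a uniform period via Proposition~\ref{prop:BD}, and then run Proposition~\ref{Deligne} together with Lang--Serre --- in effect reducing the statement to the finiteness mechanism that the paper uses to prove Proposition~\ref{reason}. Both routes end the same way: finitely many moduli points on a dense set of closed points, plus irreducibility of $S$, forces $\varphi_E$ to be constant. The paper's route buys economy (one cover, one ambient semistable bundle); yours buys the conceptual point that the dichotomy with Laszlo's example is exactly boundedness of the period, and exhibits the Proposition as a formal consequence of Proposition~\ref{reason}. Two spots in your write-up need a little more care, though neither is fatal: the restriction of the spread-out Galois cover to a closed fibre may be disconnected, so a priori you only get a connected Galois cover of degree \emph{dividing} $N$ (harmless, since iterating the isomorphism of Proposition~\ref{prop:BD} still yields $(F_X^N)^*E_s\simeq E_s$); and your assertion that finitely many covers gives finitely many stable bundles itself requires the Jordan--H\"older step --- each stable $E_s$ embeds in $(\alpha_*\O_{Y_\alpha})^{\oplus r}$ with $\alpha_*\O_{Y_\alpha}$ strongly semistable of degree $0$ --- which is precisely the ingredient the paper makes explicit.
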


\begin{proof}
Assume that there exists a finite \'etale cover $\pi':Y' \to
X_{\bar \eta}$ such that $(\pi')^*E _{\bar \eta}\simeq \O_
{Y'}^r$.  As $k$ is algebraically closed, one has the base change
$\pi_1(X)\xrightarrow{\cong} \pi_1(X_{\bar K})$ for the \'etale
fundamental group {(\cite[Exp.~X,~Cor.1.8]{SGA1}),} so there  exists a
finite \'etale cover $\pi:
Y\to X$ such that $\pi'=\pi\otimes \bar K$. Hence there exists  a
finite morphism  $T\to U$ over some open subset $U$ of $S$, such
that $\pi_T^*(E_T)$ is trivial where $\pi_T=\pi \times _k \id _T:
Y\times _kT\to X\times _k T$ and $E_T=$pull back by $X\times_k T \to
X\times_k U$ of $E|_{X\times_kU}$.

So for any $k$-rational point $t \in T$, one has $\pi^*
E_t\subset \O_Y ^r$, where $r$ is the rank of $E$. Hence
$E_t\subset \pi_*\pi^* E_t \subset \pi_*\O_{Y}^r$, i.e., all the
bundles $E_t$ lie in one fixed bundle $\pi_*\O_Y^r$.

Since $\pi$ is \'etale, the diagram
$$ \xymatrix{ &Y\ar[d]^{\pi}\ar[r]^{F_Y}& Y\ar[d]^{\pi}\\
&X\ar[r]^{F_X}&X }$$ is cartesian (see, e.g., { \cite[Exp. XIV,
{\S}1, Prop.~2]{SGA5}}). Since $X$ is smooth, $F_X$ is flat. By
flat base change we have isomorphisms $F_X^*(\pi_*\O_Y)\simeq
\pi_*(F_Y^*\O_Y)\simeq \pi_*\O_Y$. In particular, this implies
that $\pi_*\O_Y$ is strongly semistable of degree $0$. Therefore
if $E_t$ is stable then it appears as one of the factors in a
Jordan--H\"older filtration of $\pi_*\O_Y$. Since the direct sum
of factors in a Jordan--H\"older filtration of a semistable sheaf
does not depend on the choice of the filtration, there are only
finitely many possibilities for the isomorphism classes of stable
sheaves $E_t$ for  $t\in T(k)$.

It follows that in $U\subset S$ there is an infinite sequence of
$k$-rational points $s_i$ with the property that $E_{s_i}$ is
stable (since stability is an open property) and $E_{s_i}\cong
E_{s_{i+1}}$. This contradicts our assumption that the classifying
morphism $\varphi _E$ is not constant.
\end{proof}

\medskip

\begin{Corollary} \label{counterexample}
There exist smooth curves $X$ and $S$ defined over an algebraic
closure $k$ of $\FF _2$ such that $X$ is projective and there
exists a locally free sheaf $E$ on $X\times _k S\to S$ such
that for every $s\in S(k)$, the bundle $E_s$ is \'etale
trivializable but $E_{\bar \eta}$ is not \'etale trivializable.
Moreover, on $E$ there exists a structure of a relatively
stratified sheaf $\EE$ such that for every $s\in S(k)$, the bundle
$\EE_s$ has finite monodromy but the monodromy group of $\EE_{\bar
\eta}$ is infinite.
\end{Corollary}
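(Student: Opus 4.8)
The plan is to assemble Corollary~\ref{counterexample} directly from the machinery already developed, using Laszlo's family as the input. First I would invoke Theorem~\ref{Laszlo} to obtain, over a finite extension $\FF_{2^a}$ of $\FF_2$, a smooth quasi-projective curve $S_0$ and a rank $2$ bundle $E^0$ on $X_0\times_{\FF_{2^a}} S_0$ with $(F^2\times\id)^*E^0\simeq E^0$, with trivial determinant, with $E^0_{s_0}$ stable for every closed point, and with non-constant classifying morphism $\varphi_{E^0}$. I would then base change everything to $k=\bar\FF_2$, setting $X=X_0\otimes_{\FF_{2^a}}k$, $S=S_0\otimes_{\FF_{2^a}}k$, and $E=E^0\otimes k$. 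Since $X_0$ is defined over $\FF_2$ and the Verschiebung involution on $\Delta$ is the one computed in the excerpt, the displayed formula for $(F_X^{2n})^*|_\Delta$ gives, for every closed point $s\in S(k)$, an integer $n(s)$ with $(F_X^{n(s)})^*E_s\simeq E_s$; by Proposition~\ref{La-St} each $E_s$ is \'etale trivializable. This gives the first half of the ``moreover''-free assertion.

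Next I would show $E_{\bar\eta}$ is \emph{not} \'etale trivializable by applying Proposition~\ref{Esnault}. Its hypotheses are met: $E_s$ is numerically flat on closed fibres (an \'etale trivializable bundle is numerically flat, as recalled after Definition~\ref{numflat}, and Proposition~\ref{La-St} applies fibrewise), some $E_s$ is stable (indeed all are, by Theorem~\ref{Laszlo}), and $\varphi_E$ is non-constant because $\varphi_{E^0}$ is non-constant and base change to $k$ cannot collapse its image. Proposition~\ref{Esnault} then yields directly that $E_{\bar\eta}$ is not \'etale trivializable, which is exactly the contrast we want against the fibrewise triviality just established.

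For the ``Moreover'' clause I would produce the relative stratified structure from the given Frobenius isomorphism. The isomorphism $\tau\colon (F^2\times\id_S)^*E\simeq E$ is precisely the hypothesis \eqref{2} of Proposition~\ref{prop1} (with $r=1$, $p=2$, $n=2$, working over $\FF_2$ rather than $\FF_{2^a}$ after descending $\tau$ as in the proof of Proposition~\ref{prop3}), so that proposition endows $E$ with a relative stratified structure $\EE=\EE_\tau$ with $E_0=E$. Proposition~\ref{prop3} applies, since $X$ is proper and $\FF_2\subset k\subset\bar\FF_2$, to conclude that for every closed point $s$ the monodromy group $\pi(\EE_s,\omega)$ is finite, i.e. $\EE_s$ has finite monodromy. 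It remains to see that $\EE_{\bar\eta}$ has infinite monodromy: were it finite, Lemma~\ref{lem:finiteness} 2) would furnish a finite \'etale cover of $X_{\bar\eta}$ trivializing $\EE_{\bar\eta}$, hence trivializing its underlying bundle $E_{\bar\eta}$, contradicting the non-\'etale-triviality established via Proposition~\ref{Esnault}.

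The main obstacle is a matter of bookkeeping rather than of a hard new idea: one must make sure the descent of $\tau$ to the absolute Frobenius over $\FF_2$ is done cleanly, so that Proposition~\ref{prop1} produces a stratification relative to $S$ over $\bar\FF_2$ (not merely over a finite field), and one must confirm that the integer $n(s)$ governing $(F_X^{n(s)})^*E_s\simeq E_s$ is genuinely unbounded as $s$ ranges over $S(\bar\FF_2)$ — this unboundedness, forced by the field of definition of $s$ as explained in the introduction, is exactly why the \emph{relative} Frobenius period $2$ over $\FF_2$ does not globalize to a single absolute period over $\bar\FF_2$, and hence why the example is subtle. Once these points are verified, the two halves of the statement are immediate consequences of Propositions~\ref{Esnault} and~\ref{prop3} together with Lemma~\ref{lem:finiteness}.
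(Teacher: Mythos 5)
Your proposal is correct and follows essentially the same route as the paper: the first assertion is exactly the paper's combination of the discussion after Theorem~\ref{Laszlo} (each $E_s$ is \'etale trivializable via Proposition~\ref{La-St}) with Proposition~\ref{Esnault}, and the ``Moreover'' clause is the paper's appeal to Proposition~\ref{prop1}, with Proposition~\ref{prop3} and Lemma~\ref{lem:finiteness} supplying the finite monodromy on closed fibres and the contradiction at $\bar\eta$, respectively. The only inessential difference is that you flag the unboundedness of $n(s)$ as something to verify, whereas it is not needed for the argument (it is an a posteriori consequence, via Proposition~\ref{reason}, of the non-constancy of the classifying morphism).
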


The second part of the corollary follows from Proposition
\ref{prop1}. The above corollary should be compared to the
following fact:

\begin{Proposition} \label{reason}
Let $X$ be a projective variety defined over an algebraically
closed field $k$ of positive characteristic. Let $S$ be a
$k$-variety and let $E$ be a rank $r$ locally free sheaf on
$X\times _k S$. Assume that there exists a positive integer $n$
such that for every $s\in S(k)$ we have $(F_X^n)^*E_s\simeq E_s$,
where $F_X$ denotes the absolute Frobenius morphism. Then the
classifying morphism $\varphi _E:S\to M_X(r)$ is constant and
 $E_{\bar \eta}$ is \'etale trivializable.
\end{Proposition}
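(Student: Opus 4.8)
The plan is to use the uniform Frobenius period $n$ to force the classifying morphism $\varphi_E$ into a finite set, and then to deduce the \'etale trivializability of $E_{\bar\eta}$ from Proposition~\ref{semi-trivializability}. First I would record that the hypothesis places us in the setting of Proposition~\ref{semi-trivializability}. Since $(F_X^n)^*E_s\simeq E_s$ for every $s\in S(k)$, Proposition~\ref{La-St} shows that each $E_s$ is \'etale trivializable, hence numerically flat (Definition~\ref{numflat}), hence Gieseker semistable with numerically trivial Chern classes. In particular $E_s$ has the Hilbert polynomial of the trivial rank $r$ sheaf, so $\varphi_E\colon S\to M_X(r)$ is well defined, and all fibres $E_s$ are \'etale trivializable. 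Thus once $\varphi_E$ is shown to be constant, the \'etale trivializability of $E_{\bar\eta}$ follows directly from Proposition~\ref{semi-trivializability}.

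The heart of the argument is the constancy of $\varphi_E$. Here I would consider the operation of pull-back along the $n$-th power of the absolute Frobenius, $[G]\mapsto[(F_X^n)^*G]$, on the numerically flat locus $M_X(r)$ (note that Frobenius pull-back preserves numerical flatness). The key structural fact I would exploit is that this operation is a genuine $k$-morphism $\Phi\colon M_X(r)\to M_X(r)$ which factors through the relative Frobenius of the moduli space: writing $F_X^n=W_X^n\circ F_{X/k}^n$ as in Section~\ref{s:strat}, pull-back along $W_X^n$ induces the $n$-fold relative Frobenius $F_{M_X(r)/k}^n\colon M_X(r)\to M_X(r)^{(n)}$, while pull-back along $F_{X/k}^n$ induces a $k$-morphism (a Verschiebung) $M_X(r)^{(n)}\to M_X(r)$. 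In the rank $1$ case this is just the standard decomposition $[p^n]=V^n\circ F^n$ on $\Pic^0(X)$, since $(F_X^n)^*L\simeq L^{\otimes p^n}$. As the relative Frobenius has vanishing differential, it follows that $d\Phi_x=0$ at every point $x$.

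Now by hypothesis $\Phi(\varphi_E(s))=[(F_X^n)^*E_s]=[E_s]=\varphi_E(s)$ for every $s\in S(k)$, so every point of $\varphi_E(S)$ is fixed by $\Phi$. Let $Z=\overline{\varphi_E(S)}_{\mathrm{red}}$, an integral closed subscheme of $M_X(r)$ on which $\Phi$ restricts to the identity: the fixed locus $(\id,\Phi)^{-1}(\Delta)$ is closed and contains the dense set $\varphi_E(S)(k)$. At a smooth point $z$ of $Z$ the equality $\Phi|_Z=\id_Z$ gives $d\Phi_z|_{T_zZ}=\id_{T_zZ}$, whereas $d\Phi_z=0$; hence $T_zZ=0$ and $\dim Z=0$. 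Since $S$ is irreducible, $\varphi_E(S)$ is irreducible, so $Z$ is a single point and $\varphi_E$ is constant. Combined with the first paragraph and Proposition~\ref{semi-trivializability}, this proves that $E_{\bar\eta}$ is \'etale trivializable.

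I expect the delicate point to be the justification that $\Phi$ factors through the relative Frobenius $F_{M_X(r)/k}^n$, equivalently that $d\Phi=0$: one must check that pull-back along $W_X^n$ really is the relative Frobenius on the moduli scheme. This is precisely the phenomenon stressed in the introduction, that pulling back by the absolute Frobenius is purely inseparable on moduli, and it is what makes the \emph{uniform} bound $n$ force constancy, in contrast with Laszlo's example, where the period $n(s)$ is unbounded and the moduli points genuinely move.
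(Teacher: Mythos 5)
Your strategy is genuinely different from the paper's, and in outline it is viable. The paper argues group-theoretically: by the strong (Lange--Stuhler/Deligne) form of Proposition~\ref{La-St}, each isomorphism $(F_X^n)^*E_s\simeq E_s$ produces a trivializing Galois cover of $X$ with the \emph{fixed} group $\GL_r(\FF_{p^n})$; the Lang--Serre theorem says $X$ has only finitely many \'etale covers of bounded degree, so every $\gr_{JH}E_s$ embeds into one of finitely many sheaves $(\gr_{JH}\alpha_*\O_Y)^{\oplus r}$, whence $\varphi_E(S(k))$ is a finite set and, $S$ being connected, a single point; then Proposition~\ref{semi-trivializability} finishes, exactly as in your last step. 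Your replacement of this by a fixed-point argument for the Verschiebung is attractive, and your identification of which factor of $F_X^n=W_X^n\circ F_{X/k}^n$ induces the relative Frobenius of the moduli space and which induces the Verschiebung is the correct one (it matches $[p^n]=V^n\circ F^n$ on $\Pic^0$). However, as written there is a genuine gap: the assertion that $[G]\mapsto[(F_X^n)^*G]$ is ``a genuine $k$-morphism $\Phi\colon M_X(r)\to M_X(r)$'' is false. Frobenius pull-back does not preserve Gieseker semistability, and $M_X(r)$ is \emph{not} the numerically flat locus --- the paper stresses that numerical flatness is not an open condition, so $M_X(r)$ contains semistable bundles that are not strongly semistable, at which $(F_X^n)^*$ yields an unstable sheaf and $\Phi$ has no value. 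This is visible in Laszlo's example itself, where the Verschiebung is only a rational map $\PP^3\dashrightarrow\PP^3$. Your parenthetical justification (Frobenius pull-back preserves numerical flatness) therefore does not apply to the points where it would be needed.

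The gap is repairable because your argument is local around $Z=\overline{\varphi_E(S)}_{\mathrm{red}}$ and all the points $[E_s]$, $s\in S(k)$, are numerically flat: one restricts to the open locus $U\subset M_X(r)$ where $(F_X^n)^*$ remains semistable (openness is checked on the Quot-scheme presentation, using that the condition depends only on the Jordan--H\"older factors, so the locus is saturated), and runs the fixed-locus and zero-differential argument on $Z\cap U$, which is dense in $Z$ and contains smooth points of $Z$. Two further points then need care. First, $M_X(r)$ is only a coarse moduli space, with no universal family, so $\Phi$ must be constructed on $U$ from a natural transformation of functors via (uniform) corepresentability, not by a formula on points. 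Second, since $\theta(k[\epsilon])$ is not a bijection, $d\Phi=0$ cannot be read off from the behavior of the functor on $k[\epsilon]$-points; one needs the scheme-level factorization $\Phi=V\circ F^n_{M_X(r)/k}$. This does follow from the decomposition of $F_X^n\times\id_T$ as a $T$-relative Frobenius followed by a map covering $F^n_T$: pulling back a $T$-family by $F_X^n\times\id_T$ gives a family obtained by base change along $F^n_T$, so its classifying map factors through the Frobenius of $T$, and uniqueness in corepresentability forces $\Phi$ to factor through $F^n_{M_X(r)/k}$. With these repairs your proof is correct; it trades the paper's appeal to Lang--Serre finiteness for moduli-theoretic input, and it isolates nicely why the \emph{uniform} exponent $n$ is essential, in the same way the paper contrasts the statement with Laszlo's example.
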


\begin{proof}
By  Proposition \ref{La-St}, if $(F_X^n)^*E_s\simeq E_s$ then
there exists a finite \'etale Galois cover $\pi _s: Y_s\to X$ with
Galois group $G=\GL _r (\FF _{p^n})$ such that $\pi _s^*E_s$ is
trivial (in this case it is essentially due to Lange and Stuhler;
see \cite{LSt}). This implies that $E_s\subset (\pi_s)_*\pi
_s^*E_s\simeq ((\pi_s)_* \O_Y)^{\oplus r}$ and hence $\gr _{JH}
E_s \subset (\gr _{JH}(\pi_s)_*\O_Y)^{\oplus r}$.

Since $X$ is proper, the \'etale fundamental group of $X$ is
topologically finitely generated and hence there exists only
finitely many finite \'etale coverings of $X$ of fixed degree (up
to an isomorphism). This theorem is known as the Lang--Serre
theorem (see \cite[Th\'eor\`eme 4]{LS}). Let $\cS$ be the set of
all Galois coverings of $X$ with Galois group $G$. Then for every
closed $k$-point $s$ of $S$ the semi-simplification of $E_s$ is
contained in $(\gr _{JH}\alpha _*\O_Y)^{\oplus r}$ for some
$\alpha \in \cS$. Therefore there are only finitely many
possibilities for images of $k$-points $s$ in $M_X(r)$. Since $S$
is connected, it follows that $\varphi _E:S\to M_X(r)$ is
constant.

 The remaining part of the proposition follows from
Proposition \ref{semi-trivializability}.

\end{proof}

\medskip

Note that by Proposition~\ref{Esnault} together with
Corollary~\ref{control},  the monodromy groups of $E_s$ in
Theorem~\ref{Laszlo}
 for
$s\in S(k)$ are not uniformly bounded. In fact, only if $k$ is an
algebraic closure of a finite field do we know that the
monodromy groups of $E_s$ are finite because then $E_s$ can be
defined over some finite subfield of $k$ and the isomorphism
$(F^2)^* E_s\simeq E_s$ implies that for some $n$ we have
$(F_X^n)^*E_s\simeq E_s$ (see the paragraph following Theorem
\ref{Laszlo}).

Moreover, the above proposition shows that in
Theorem~\ref{Laszlo}, we cannot hope to replace $F$ with the
absolute Frobenius morphism $F_X$.

\section{Analogue of the Grothendieck-Katz conjecture
in positive equicharacteristic}

As Corollary \ref{counterexample} shows,   the positive
equicharacteristic version of the Grothendieck--Katz conjecture
which requests
 a relatively stratified bundle to have finite monodromy group on the
geometric generic fiber once it does on all closed fibers, does
not hold in general. But one can still hope that it holds for a
family of bundles coming from representations of the prime-to-$p$
quotient of the \'etale fundamental group. In this section we
follow Andr\'e's approach \cite[Th\'eor\`eme~7.2.2]{An} in the
equicharacteristic zero case to show that this is indeed the case.

\medskip

Let $k$ be an algebraically closed field of positive
characteristic $p$. Let $f: X\to S$ be a smooth projective
morphism of $k$-varieties (in particular, integral $k$-schemes).
Let $\eta$ be the generic point of $S$.  { In particular, {
$X_{\bar \eta}$} is smooth (see \cite[Defn~1.1]{SGA1}). }

\begin{Theorem} \label{GK-etale}
Let $E$ be a locally free sheaf of rank $r$ on $X$. Let us assume
that there exists  a dense subset $U\subset S(k)$ such that for
every $s$ in $U$, there is a finite Galois \'etale covering
$\pi_s: Y_s\to X_s$ of Galois group of order prime-to-$p$ such
that $\pi_s^*(E_s)$ is trivial.
\begin{itemize}
 \item[1)]  Then   there exists a finite Galois \'etale covering $\pi
_{\bar
\eta}: Y_{\bar \eta}\to X_{\bar \eta}$  of order
prime-to-$p$ such that $\pi _{\bar \eta}^*E_{\bar \eta}$ is a
direct sum of line bundles.
\item[2)] If  $k$ is not algebraic over its prime field and $U$ is open in $S$, then
$E_{\bar \eta}$ is \'etale trivializable on a finite \'etale
cover $Z_{\bar \eta}\to X_{\bar \eta}$ which factors as a Kummer
(thus finite abelian of order prime to $p$) cover $Z_{\bar
\eta}\to Y_{\bar \eta}$ and a Galois cover $Y_{\bar \eta}\to
X_{\bar \eta}$ of order prime to $p$.
\end{itemize}

\end{Theorem}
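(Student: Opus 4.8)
The plan is to follow André's strategy \cite[Thm~7.2.2]{An}, replacing Jordan's theorem by Brauer--Feit \cite{BF} and Simpson's moduli spaces by the relative moduli spaces of Section~\ref{moduli}. The first step is a reduction to abelian monodromy. For each $s\in U$ the trivializing prime-to-$p$ Galois cover exhibits $E_s$ as the bundle associated to a representation of the Galois group with finite image $G_s\subset \GL_r(k)$ of order prime to $p$. Since this order is prime to $p$, the Sylow $p$-contribution in Brauer--Feit \cite{BF} is trivial, so that theorem produces a normal abelian subgroup $A_s\trianglelefteq G_s$ of index bounded by a constant $J(r)$ depending only on $r$. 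The intermediate cover attached to $A_s$ is a Galois cover $W_s\to X_s$ of degree $\le J(r)$, prime to $p$, on which $E_s$ becomes a direct sum of line bundles; these summands are torsion of order prime to $p$, because an abelian prime-to-$p$ subgroup of $\GL_r(k)$ is diagonalizable.

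Next I would make the cover uniform. As $f$ is smooth projective with geometrically connected fibres, the prime-to-$p$ quotients of the geometric fundamental groups of all fibres are identified with that of $X_{\bar\eta}$ by Grothendieck's specialization theorem \cite{SGA1}, and a topologically finitely generated profinite group has only finitely many open subgroups of index $\le J(r)$. Hence the $W_s$ arise, up to isomorphism, from finitely many prime-to-$p$ Galois covers of $X_{\bar\eta}$, each of which spreads out over a dense open $U'\subset S$; by the pigeonhole principle one of them, say $\pi\colon Y\to X$, has the property that $E':=\pi^*E$ restricts to a direct sum of torsion prime-to-$p$ line bundles on a Zariski-dense set $U''$ of closed points of $U'$. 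Replacing $\pi$ by its Galois closure, I may assume $\pi_{\bar\eta}\colon Y_{\bar\eta}\to X_{\bar\eta}$ is Galois of bounded degree prime to $p$.

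To prove 1) I would run a moduli argument on $Y$. Each $E'_s$ ($s\in U''$) is numerically flat (Definition~\ref{numflat}), hence Gieseker semistable with numerically trivial Chern classes; after shrinking $U'$ (semistability being open) all fibres of $E'$ are semistable with the Hilbert polynomial of $\O^{r}$, so $E'$ defines a classifying morphism $\varphi_{E'}\colon U'\to M(Y/U',r)$. The $S$-equivalence classes of direct sums of numerically flat line bundles form a \emph{closed} subset $Z$: it is the image of $\mathrm{Sym}^r\!\bigl(\Pic^{\tau}_{Y/U'}\bigr)$, which is proper over $U'$ since $\Pic^{0}_{Y/U'}$ is an abelian scheme and $\Pic^{\tau}/\Pic^{0}$ is finite. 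As $\varphi_{E'}^{-1}(Z)$ is closed and contains the dense set $U''$, it equals $U'$, so $\gr_{JH}(E'_{\bar\eta})=\bigoplus_i M_i$ with $M_i\in\Pic^{\tau}(Y_{\bar\eta})$. Promoting this to an honest splitting $E'_{\bar\eta}\cong\bigoplus_i M_i$ is the first delicate point; here I would analyze the sublocus of $M(Y/U',r)$ through these polystable points as in Proposition~\ref{semi-trivializability}, together with Lemma~\ref{splitting}, to see that the generic member is again a direct sum. This yields statement 1), with $\pi_{\bar\eta}$ of bounded prime-to-$p$ degree.

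For 2), with $k$ not algebraic over $\FF_p$ and $U$ open, it remains to show the $M_i$ are torsion of order prime to $p$; granting this, a Kummer (abelian, prime-to-$p$) cover $Z_{\bar\eta}\to Y_{\bar\eta}$ simultaneously trivializes all $M_i$, so $E'_{\bar\eta}$ becomes trivial on $Z_{\bar\eta}$ and $E_{\bar\eta}$ is étale trivializable via $Z_{\bar\eta}\to Y_{\bar\eta}\to X_{\bar\eta}$, which is exactly the asserted factorization. To produce the torsion I would spread each $M_i$ to a section $\sigma_i$ of $\Pic^{\tau}_{Y/U'}$ over a dense open; by construction $\sigma_i(s)$ is torsion of order prime to $p$ for every $s$ in the (after shrinking, dense open) locus $U''$. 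The main obstacle is the following specialization principle: a section of the relative Picard scheme that is torsion of order prime to $p$ on a Zariski-dense set of closed points is itself torsion of order prime to $p$, \emph{provided $k$ is not algebraic over $\FF_p$}. Over an uncountable $k$ this is transparent, since otherwise the torsion locus would be the countable union $\bigcup_{(N,p)=1}\sigma_i^{-1}(\Pic^{\tau}[N])$ of proper closed subschemes, which cannot exhaust a dense set of $k$-points; the refinement to all countable $k\neq\bar\FF_p$ (such as $\overline{\FF_p(t)}$) is the technical heart, and I expect it to require a height- or spreading-out argument that detects non-torsion sections once a transcendental over $\FF_p$ is present. Over $k=\bar\FF_p$ no such device exists—every closed fibre lives over a finite field, so the torsion hypothesis is automatic and carries no information—which is exactly why 2) must exclude $\bar\FF_p$ and is consistent with the counterexample of Corollary~\ref{counterexample}.
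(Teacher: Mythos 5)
Your part 1) follows essentially the paper's route (Brauer--Feit with trivial $p$-Sylow contribution, finiteness of bounded-index subgroups of the topologically finitely generated $\pi_1^{p'}(X_{\bar\eta})$, a properness argument inside the relative moduli space, then the splitting Lemma~\ref{splitting} plus the fact that degree-zero extensions of degree-zero line bundles split), and for 1) your weakening is harmless because only density of good closed points is used there. Two repairs are needed even in 1): the cover of $X_{\bar\eta}$ spreads out only after a generically finite base change of $S$, not merely over a dense open; and your properness justification ``$\Pic^0_{Y/U'}$ is an abelian scheme'' is false in positive characteristic for fibre dimension $>1$ (Picard schemes can be non-reduced) --- what saves the claim is that $\Pic^\tau(Y/U')\to U'$ is projective by Kleiman, or one works with $M(X'/S,1)$ as the paper does.

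The genuine gap is in 2), and it is structural, not a missing detail. The paper constructs \emph{one} cover $h\colon X'\to X$ that works for \emph{every} $s\in U$: it takes $G$ to be the intersection of \emph{all} subgroups of index $\le j(r)$ in $\pi_1^{p'}(X_{\bar\eta})$ (so $G\subset\ker\tilde\rho_s$ for all $s\in U$ at once), and extends it to a finite-index open subgroup of $\pi_1'(X)$ using the section $\sigma$ and the Nikolov--Segal theorem. Your pigeonhole instead selects, among the finitely many covers, one that works only on a \emph{dense} subset $U''\subset U$, thereby destroying the openness hypothesis. This is fatal, because statement 2) is actually \emph{false} for dense non-open $U$, over any algebraically closed $k$ (even uncountable): take $S=E$ an elliptic curve, $X=E\times_k S\to S$, and $L$ the Poincar\'e-type bundle with $L_s\simeq \O_E(s-0)$; the prime-to-$p$ torsion points of $E(k)$ form a Zariski-dense set of closed points on which $L_s$ is \'etale trivializable, yet $L_{\bar\eta}$ corresponds to a non-torsion point of $E$ over $\overline{k(S)}$ and is not \'etale trivializable. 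Equivalently, the diagonal section of the constant abelian scheme $E\times_k E\to E$ refutes the ``specialization principle'' you formulate for dense sets of closed points; so no argument can fill that gap without first restoring openness. For the same reason your uncountability heuristic is wrong as stated: a countable union of proper closed subvarieties can perfectly well contain a Zariski-dense set of $k$-points (any infinite set of points on a curve is dense); it only cannot exhaust the $k$-points of a non-empty \emph{open} set. Finally, even with openness restored you have only conjectured the key specialization statement; the paper proves it (Lemma~\ref{dJ}) by descending to a finitely generated transcendental subfield $k'\subset k$ and applying N\'eron's specialization theorem over Hilbertian fields (Hilbert sets contain infinitely many closed points), and then handles relative dimension $>1$ not through $\Pic^\tau$ but by Bertini-type hyperplane induction down to relative curves, precisely because the relative $\Pic^0$ is an abelian scheme only in the curve case.
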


\begin{proof}
Without loss of generality, shrinking $S$ if necessary, we may
assume that  $ S$ is smooth. Moreover, by
passing to a finite cover of $S$ and replacing $U$ by its inverse image,
we can assume that $f$ has a
section $\sigma: S\to X$.

By assumption for every  $s\in U$ there exists a finite \'etale
Galois covering $\pi _s: Y_s\to X_s$ with Galois group $\Gamma _s$
of order prime-to-$p$ and such that $\pi_s^*E_s$ is trivial. To
these data one can associate a representation $\rho _s : \pi^{p'}
_1(X_s, \sigma (s))\to \Gamma _s\subset \GL _r(k)$ of the
prime-to-$p$ quotient of the \'etale fundamental group.

By the Brauer--Feit version of Jordan's theorem (see
\cite[Theorem]{BF}) there exist a constant $j(r)$ such that
$\Gamma _s $ contains an abelian normal subgroup $A_s$ of index
$\le j(r)$ (here we use assumption that the $p$-Sylow subgroup of
$\Gamma _s$ is trivial).

For a $k$-point $s$ of $S$ we have a homomorphism of
specialization
$$\alpha _s: \pi _1 (X_{\bar \eta}, \sigma (\bar\eta ))\twoheadrightarrow \pi _1 (X_s, \sigma (s)),$$
which induces an isomorphism of the prime-to-$p$ { quotients of
the \'etale fundamental groups. }

 So for every $s\in U$ we can define the composite
morphism
\ga{}{ \tilde\rho _s: \pi _1 ^{p'}(X_{\bar \eta},
\sigma (\bar\eta )) \xrightarrow{\alpha_s} \pi _1^{p'} (X_s,
\sigma (s)) \xrightarrow{\rho_s} \Gamma _s\twoheadrightarrow
\Gamma_s/A_s. \notag }

 Let $K$ be the kernel of the canonical homomorphism $\pi_*:
\pi _1 (X, \sigma (\bar \eta)) {\longrightarrow} \pi _1 (S, \bar
\eta)$, {  let $K^{p'}$  be its maximal pro-$p'$-quotient. Then by
}
 \cite[Exp. XIII, Proposition 4.3 and Exemples 4.4]{SGA1},
{  one has $ K^{p'}=\pi _1 ^{p'}(X_{\bar \eta}, \sigma ({\bar
\eta}))$, the maximal pro-$p'$-quotient of   $\pi _1(X_{\bar
\eta}, \sigma ({\bar \eta}))$, } and one has a short exact
sequence
$$ \{1\}\to \pi _1 ^{p'}(X_{\bar \eta}, \sigma ({\bar \eta})) {\longrightarrow}
\pi _1' (X, \sigma (\bar\eta )) \mathop{\longrightarrow}^{\pi_*}
\pi _1 (S, \bar\eta)\to \{1\},$$ { where $\pi _1' (X, \sigma
(\bar\eta )) $ is defined as the push-out of $\pi _1 (X, \sigma
(\bar\eta )) $ by $K\to K^{p'}$. }

Since $X_{\bar \eta}$ is proper, $\pi _1 (X_{\bar \eta}, \sigma
(\bar\eta ))$ is topologically finitely generated. Therefore $\pi
_1^{p'} (X_{\bar \eta}, \sigma (\bar\eta ))$ is also topologically
finitely generated and hence it contains only finitely many
subgroups of indices $\le j(r)$. Let $G$ be the intersection of
all such subgroups  in $\pi
_1^{p'} (X_{\bar \eta}, \sigma (\bar\eta ))$. It is a normal subgroup of
finite index.  Since $\ker
(\tilde\rho _s)$ is a normal subgroup of index $\le j(r)$ in $\pi
_1^{p'} (X_{\bar \eta}, \sigma (\bar\eta ))$ we have
$$G\subset \bigcap _{ s\in U}\ker (\tilde\rho _s).$$

Now let us consider the commutative diagram
$$\xymatrix{
& &  \pi _1 (X_{\bar \eta}, \sigma (\bar \eta)) \ar[r] \ar[d] &
\pi
_1 (X, \sigma (\bar\eta)) \ar[r] \ar[d] & \pi _1 (S, \bar\eta ) \ar[r]\ar[d] &\{1\} \\
&\{1\}\ar[r]&  \pi _1^{p'} (X_{\bar\eta}, \sigma (\bar\eta))
\ar[r] & \pi _1^{'} (X, \sigma (\bar\eta))\ar[r] & \pi _1 (S,
\bar\eta ) \ar[r]& \{ 1 \} }$$ Then  $G \cdot \sigma _* (\pi _1
(S, \bar \eta))\subset \pi _1' (X, \sigma (\bar \eta))$ is a
subgroup of finite index. It is open by the Nikolov--Segal theorem
\cite[Theorem 1.1]{NS}. So  the pre-image $H$ of  this subgroup
under the quotient homomorphism $\pi _1 (X, \sigma (\bar\eta))\to
\pi _1^{'} (X, \sigma (\bar\eta))$ defines a finite \'etale
covering $h: X' \to X$.

Let us take $s\in S(k)$. Since the composition
$$H\subset \pi _1
(X, \sigma (\bar \eta)) \to \pi _1 (X, \sigma (s)) \to \pi_1(S,
s)$$ is surjective, the geometric fibres of $X'\to S$ are
connected. Let us choose a $k$-point in $X'$ lying over $\sigma
(s)$. By abuse of notation we call it $\sigma '(s)$. Similarly,
let us choose a geometric point $\sigma '(\bar\eta)$ of $X'_{\bar
\eta}$ lying over $\sigma (\bar\eta)$. Then for any  $s\in U$ we
have the following commutative diagram:
$$\xymatrix{
&  \pi _1^{p'} (X'_{\bar \eta}, \sigma '(\bar
\eta))\ar@(ur,ul)[rr]^{0} \ar[r]^{h _*}\ar[d]^{ \simeq} &  \pi
_1 ^{p'}(X_{\bar \eta}, \sigma (\bar\eta)) \ar[r] \ar[d]^{\alpha_s
\ \simeq} & \pi _1^{p'} (X_{\bar\eta}, \sigma (\bar\eta))/ G
\ar[d] \\
&  \pi _1^{p'} (X'_{s}, \sigma '(s)) \ar[r]^{h_*} &  \pi _1^{p'}
(X_{s}, \sigma (s))\ar[r] & \Gamma _s/A_s}$$ This diagram shows
that $ \pi _1^{p'} (X'_{s}, \sigma '(s))\to \Gamma _s$ factors
through $A_s$ and hence $E'_s=(h ^*E)_s$ is trivialized by a
finite \'etale Galois covering { $\pi _s': Y_s'\to X_s'$} with an
abelian Galois group  of order prime to $p$, which is a subgroup
of $A_s$. Since
$$E'_s\subset (\pi _s')_* (\pi _s')^*E'_s \simeq ((\pi _s')_*\O_{Y'_s})^{\oplus
r},$$ and $(\pi _s')_*\O_{Y'_s}$ is a direct sum of torsion line
bundles of orders prime to $p$, it follows that  for every $s\in
U$ the bundle $E'_s$ is also a direct sum of torsion line bundles
of order prime to $p$.

 We consider the union $M(X'/S, r)$ of the components of
$M_P(X'/S)$ containing moduli points of numerically flat bundles,
as defined in Section \ref{moduli}.  Let us consider the
$S$-morphism $\psi: M(X'/S,1)^{\times _S r}\to M(X'/S)$  given by
$([L_1],..., [L_r])\to [\oplus L_i]$ (in fact we give it by this
formula on the level of functors; existence of the morphism
follows from the fact that moduli schemes corepresent these
functors). The bundle $E'$ gives us a section $\tau : S\to
M(X'/S,r)$, and by the above  for every $k$-rational point $s$ of
$U$, the point $\tau (s)$ is contained in the image of $\psi$.
Therefore $\tau (S)$ is contained in the image of $\psi$  as
$\psi$ is projective (thus proper).

Let us consider the fibre product
$$
\xymatrix{
&M(X'/S,1)^{\times _{S} r} \times _{M(X'/S,r)} S\ar[r]\ar[d]& S\ar[d]^{\tau}\\
&M(X'/S,1)^{\times _{S} r} \ar[r]&  M(X'/S,r)\\
}
$$
Let us recall that in positive characteristic the canonical map
$M(X'\times _SS '/S',r) \to  M(X'/S, r)\times _{S}S'$ need not be
an isomorphism (although it is an isomorphism for $r=1$). Anyway
we can find an \'etale morphism $S'\to S$ over some non-empty open
subset of $S$, such that there exists a map $\upsilon: S'\to
M(X'\times _SS '/S',1)^{\times _{S'} r} $ which composed with $
M(X'\times _SS '/S',1)^{\times _{S'} r}\to M(X'\times _SS '/S',r)
\to  M(X'/S, r)$ gives the composition of $S'\to S$ with $\tau$.
This shows that the pull back $E''$ of $E'$ to $X'\times _S S'$
has a filtration whose quotients { are line bundles which are } of
degree $0$ on the fibres of $X'\times _S S'\to S'$. Now let us
note the following lemma:

\begin{Lemma}{\label{splitting}}
Let $f: X\to S$ be a projective morphism of $k$-varieties. Let
$0\to G_1\to G\to G_2\to 0$ be a sequence of locally free sheaves
on $X$. Assume that  there exists a dense subset $U\subset S(k)$
such that for each $s\in U$ this sequence splits after restricting
to $X_s$. Then it splits on the fibre $X_{\eta}$ over the generic point $\eta$
of $S$.
\end{Lemma}

\begin{proof}
By shrinking $S$ if necessary, we may assume that $S$ is affine
and  the relative cohomology sheaf $R^1p_* \cHom (G_2, G_1)$ is
locally free. The above short exact sequence defines a class
$\lambda \in {\rm Ext} ^1 (G_2,G_1)\simeq  H^0(S, R^1f_* \cHom
(G_2, G_1))$, such that $\lambda (s)=0$ for every $k$-rational
point $s$ of $U$. It follows that $\lambda =0$ and hence the
sequence is split over the generic point of $S$.
\end{proof}

\medskip

Now let us note that on a smooth projective variety every short
exact sequence of the form $0\to G_1\to G\to G_2\to 0$ in which
$G$ is a direct sum of line bundles of degree $0$ and $G_2$ is a
line bundle of degree $0$ splits. { So the  filtration of $E''$ restricted
to the closed fibers splits}. Therefore the above lemma and
easy induction show that $E''_{\eta'}$  is a direct sum of line
bundles, { where $\eta'$ is the generic point of $S'$}. This shows the
first part of the theorem.

\medskip

To prove the second part of the theorem, we may assume that $U=S$.
 Let us take a line bundle $L$ on $X$ such that for every
$k$-rational point $s$ the line bundle $L_s$ is \'etale
trivializable. We need to prove that there exists a positive
integer $n$ prime to $p$ and such that $L^{\otimes n}_{\eta}\simeq
\O_{X_{\eta}}$.

\medskip

{We thank the referee for showing us the following lemma.}

\begin{Lemma} \label{dJ}
Let $g: A\to S$ be an abelian scheme and let $\sigma$ be a section
of $g$ such that for all $s\in S(k)$,  $ \sigma(s)$ is torsion of
order prime to $p$. Then $\sigma$ is torsion of order prime to
$p$.
\end{Lemma}

\begin{proof}
We may assume that $S$ is normal and affine. Let us choose a
subfield $k'\subset k $ that is finitely generated and
transcendental over $\FF _p$ and such that $A\to S$ and $\sigma$
come {by} base change $\Spec k\to \Spec k'$ from an abelian scheme
$g':A'\to S'$ and a section $\sigma'$ defined over $k'$.
 Let $m>1$
be prime to $p$ and let $\Gamma$ be the subgroup $A'(S')\cap
[m]^{-1}(\ZZ .\sigma ')$ of $A'(S')$.  Then
$\Gamma$ is a finitely generated group. Note that assumptions of
N\'eron's specialization theorem \cite[Chapter 9, Theorem 6.2]{L}
are satisfied and therefore there exists a Hilbert set $\Sigma$ of
points $s'\in S'$ for which the specialization map $A'(S')\to
A'_{s'}(k(s'))$ is injective on $\Gamma$. Since the Hilbert subset
$\Sigma \subset S'$ contains infinitely many closed points (see
\cite[Chapter 9, Theorems 5.1, 5.2 and 4.2]{L}), there is a closed
point $s\in S$ { the  image of which} in $S'$ lies in $\Sigma$.
The specialization of $\ZZ .\sigma$ at $s$ is injective and hence
$\sigma$ is torsion of order dividing the order of $\sigma (s)$, {
which is prime to $p$.}
\end{proof}

\medskip

Let us first assume that $X\to S$ is of relative dimension $1$. By
passing to a finite cover of $S$ we can assume that $f$ has a
section. The relative Picard scheme $A=\Pic ^0 (X/S)\to S$ is
smooth. Using the above lemma to the section corresponding to the
line bundle $L$ we see that there exists some positive integer $n$
prime to $p$ and a line bundle $M$ on $S$ such that $L^{\otimes
n}\simeq f^*M$. In particular, $L^{\otimes n}_{\eta}\simeq
\O_{X_{\eta}}$.

Now we use induction on the relative dimension of $f:X\to S$ to
prove  the theorem in the general case. Note that our assumptions
imply that $L_{\bar \eta}$ is numerically flat and therefore the
family $\{L^{\otimes n}_{\bar \eta}\} _{n\in \ZZ}$ is bounded.
Thus  for any sufficiently ample divisor $H$ on $X_{\bar \eta}$ we
have $H^1(X_{\bar \eta}, L_{\bar \eta}^{\otimes n}(-H))=0$ for all
integers $n$. We consider such an $H$ which is defined over
$\eta$.

Using Bertini's theorem we can find a very ample divisor $Y\subset
X$ in the linear system $|H|$ such that $f|_Y: Y\to S$ is smooth
(possibly after shrinking $S$) and such that for every positive
integer $n$ we have $H^1(X_{\eta}, L^{\otimes
n}(-Y)|_{Y_{\eta}})=0$. Indeed, shrinking $S$ and
using semicontinuity of cohomology, we may assume that $H$ is
defined over $S$, that  the function $\dim H^0(X_s,
\O_{X_s}(H))$ is constant and $S$ is affine. Let us choose a
$k$-rational point $s$ in $S$. Then by Grauert's
theorem (see \cite[Chapter III, Corollary 12.9]{Ha}) the
restriction map
$$H^0(X, \O_X(H))\to H^0(X_{s}, \O_{X_s}( H))$$
is surjective. By Bertini's theorem in the
linear system $| \O_{X_s}( H)|$ there exists a smooth divisor. By
the above we can lift it to a divisor $Y\subset X$, which after
shrinking $S$ is the required divisor.

Applying our induction assumption to $L|_Y$ on $Y\to X$ we see
that there exists a positive integer $n$  prime to $p$ such that
$(L|_Y)_{\eta} ^{\otimes n}\simeq \O_{Y_\eta}$. Using the short
exact sequence
$$0\to L_{\eta}^{\otimes n}(-Y_{\eta})\to L_{\eta}^{\otimes n}\to (L_Y^{\otimes n})_{\eta}\to 0$$
we see that the map
$$H^0(X_{\eta}, L_{\eta}^{\otimes n})\to H^0(Y_{\eta}, (L_Y^{\otimes n})_{\eta})$$
is surjective. In particular, $L_{\eta}^{\otimes n}$ has a section
and hence it is trivial.
\end{proof}

\begin{Remarks} \label{Remarks}
\begin{enumerate}
\item Laszlo's example shows that the first part of the theorem is false if one does not
assume that orders of the monodromy groups of $E_s$ are prime to
$p$ (in this example $E_{\bar \eta}$ is a stable rank $2$ vector
bundle). Note that in this example, $E$ has { even the richer structure of
 a}
relatively stratified bundle (see Proposition~\ref{prop1}).
\item
Let $E$ be  a supersingular elliptic curve defined over
$k=\bar\FF_p$. Let $M$ be a line bundle of degree $0$ and of infinite
order on $E_{\overline{\FF _p (t)}}$. Then one can find a smooth
curve $S$ defined over $k$ such that there exists a line bundle
$L$ on $X=S\times _kE\to S$  such that $L_{\bar \eta}\simeq M$. In
this example  the line bundle $L_s$ is torsion for every
$k$-rational point $s$ of $S$ { as it is defined over a finite
field}. Since $E$ is a supersingular elliptic curve, there are no
torsion line bundles of order  divisible by $p$. So in this case
all line bundles $L_s$ for $s\in S(k)$ are \'etale trivializable
(and the monodromy group has order prime to $p$).

This shows that the second part of Theorem \ref{GK-etale} is no
longer true if $k$ is an algebraic closure of a finite
field.
\end{enumerate}
\end{Remarks}

\medskip

Let us keep the notation from the beginning of the section, i.e.,
$k$ is an algebraically closed field of positive characteristic
$p$ and $f: X\to S$ is a smooth projective morphism of
$k$-varieties (in particular connected) with geometrically
connected fibers. For simplicity, we also assume that $f$ has a
section $\sigma: S\to X$.

\begin{Lemma}
Let $E$ be a locally free sheaf on $X$. If there exists  a  point $s_0\in S(k)$ such that
$E_{s_0}$ is numerically flat then $E_{\bar \eta}$ is also numerically flat.
In particular, if there exists  a point  $s_0\in S(k)$ such that there is a finite covering
$\pi_{s_0}: Y_{s_0}\to X_{s_0}$ such that $\pi_{s_0}^*(E_{s_0})$ is trivial,
then $E_{\bar \eta}$ is also numerically flat.
\end{Lemma}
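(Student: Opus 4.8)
The plan is to avoid working with numerical flatness directly (it is \emph{not} an open condition in positive characteristic, as stressed after Definition~\ref{numflat}, so a naive semicontinuity/openness argument for nefness of $E$ and $E^*$ cannot succeed) and instead to use the characterization recalled from \cite[Proposition~5.1]{La2}: on a smooth projective variety over an algebraically closed field, a locally free sheaf with numerically trivial Chern classes is numerically flat if and only if it is strongly slope semistable. I would then prove the two ingredients for $E_{\bar\eta}$ by two different and well-behaved mechanisms. Throughout I fix an $f$-ample line bundle $\O_X(1)$ and polarize each fibre by its restriction; recall that $S$ is integral, so every non-empty open subset contains the generic point $\eta$ and hence the geometric generic fibre $X_{\bar\eta}$. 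Note also that since $E_{s_0}$ is numerically flat, its Chern classes are numerically trivial and it is strongly slope semistable of degree $0$.

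First I would dispose of the Chern-class condition. Since $E$ is locally free on $X$ and $X\to S$ is smooth projective, for each $n$ the relative Frobenius pullback $(F_{X/S}^n)^*E$ is a locally free, $S$-flat sheaf whose fibre over a $k$-point $s$ is $(F_{X_s}^n)^*E_s$, up to the Frobenius automorphism of $k$, which is harmless as $k$ is perfect. For $n=0$ this is $E$ itself, and in a flat family over the connected base $S$ the intersection numbers $\int_{X_s} c_i(E_s)\cdot(\O_X(1)|_{X_s})^{\dim X_s-i}$ are constant in $s$. Hence their vanishing at $s_0$ propagates to every fibre, in particular to $E_{\bar\eta}$, so $E_{\bar\eta}$ has numerically trivial Chern classes (and the same computation, using $(F^n)^*c_i=p^{ni}c_i$, shows each $(F^n)^*E_{\bar\eta}$ does too, so that slope and Gieseker semistability agree for all the sheaves below).

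The main point, and the step I expect to be the real obstacle, is strong slope semistability of $E_{\bar\eta}$, precisely because strong semistability is an infinite family of conditions and numerical flatness as a whole is not open. The resolution is to decouple it: strong semistability of $E_{s_0}$ means $(F_{X_{s_0}}^n)^*E_{s_0}$ is slope semistable for \emph{every} $n\ge 0$, and I would impose these one at a time. Fix $n$ and apply openness of slope (equivalently Gieseker) semistability, the standard fact underlying the moduli spaces of Section~\ref{moduli}, to the flat family $(F_{X/S}^n)^*E$ over $S$: the locus $U_n\subset S$ of points whose geometric fibre is semistable is open. It contains $s_0$, hence is non-empty, hence contains $\eta$ and therefore the geometric generic point $\bar\eta$; thus $(F_{X_{\bar\eta}}^n)^*E_{\bar\eta}$ is slope semistable. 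Running this for all $n$ gives that $E_{\bar\eta}$ is strongly slope semistable. Combining this with the previous paragraph and \cite[Proposition~5.1]{La2} yields that $E_{\bar\eta}$ is numerically flat.

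Finally, the ``in particular'' clause is immediate: if $\pi_{s_0}^*(E_{s_0})$ is trivial then $E_{s_0}$ is \'etale trivializable, hence numerically flat by the remark preceding Proposition~\ref{La-St}, and the first assertion applies to give numerical flatness of $E_{\bar\eta}$.
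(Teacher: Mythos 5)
Your proposal is correct and takes essentially the same route as the paper's own proof: both reduce, via the characterization in \cite[Proposition 5.1]{La2}, to (i) numerical triviality of the Chern classes of $E_{\bar\eta}$, obtained from constancy of intersection numbers in the $S$-flat family, and (ii) strong slope semistability of $E_{\bar\eta}$, obtained by applying openness of slope semistability to each Frobenius pullback $(F^n)^*E$ one $n$ at a time. The only cosmetic difference is that the paper first deduces semistability of $(F^n_{X_\eta/K})^*E_\eta$ and then invokes \cite[Corollary 1.3.8]{HL} to pass to $\bar\eta$, whereas you fold that step into the openness statement for geometric fibres.
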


\begin{proof}
Let us fix a relatively ample line bundle.
If $E_{s_0}$ is numerically flat then it is strongly semistable with
numerically trivial Chern classes (see \cite[Proposition 5.1]{La2}).  Since  $E$
is $S$-flat, the restriction of $E$ to any fiber  has  numerically trivial Chern
classes (as intersection numbers remain constant on fibres). Now note that
for any $n$ the sheaf $(F^n_{X_{s_0}/k})^*E_{s_0}$ is slope semistable. Since
slope semistability is an open property, it follows that
$(F^n_{X_{\eta}/K})^*E_{\eta}$ is also slope semistable. By \cite[Corollary 1.3.8]{HL} it follows that
$(F^n_{X_{\bar \eta}/\bar K})^*E_{\bar \eta}$ is also slope semistable.
Thus $E_{\bar \eta}$ is strongly semistable with vanishing Chern classes
and hence it is numerically flat  by \cite[Proposition 5.1]{La2}.
\end{proof}

\medskip

Let us recall that numerically flat sheaves on a proper
$k$-variety $Y$ form a Tannakian category.  A rational point  $y\in
Y(k)$ neutralizes it. Thus we can define
\emph{S-fundamental group scheme of $Y$  at the point $y$} (see
\cite[Definition 6.1]{La2}).
For a  numerically flat sheaf $E$ on $Y$, we
consider the Tannaka $k$-group $\pi _S(\langle E\rangle , y):={\rm
Aut}^{\otimes}(\langle E\rangle , y)\subset \GL (E_y)$, where now
$\langle E\rangle$ is the full tensor subcategory of numerically flat bundles
spanned by $E$.
We call it the \emph{S-monodromy group scheme}.
Using this language we can reformulate Theorem \ref{GK-etale} in the following way
(for simplicity we reformulate only the second part of the theorem).

\begin{Theorem}
Let $E$ be  an $S$-flat family of numerically flat sheaves on the
fibres of $X\to S$. Let us assume that $k$ is not algebraic over
its prime field and there exists a non-empty open subset
 $U\subset S(k)$ such that for
every $s$ in $U$, the S-monodromy group scheme $\pi _S(\langle
E_s\rangle , \sigma (s))$ is finite \'etale of order prime-to-$p$.
Then $\pi _S(\langle E_{\bar \eta}\rangle , \sigma (\bar \eta))$
is also  finite \'etale.

\end{Theorem}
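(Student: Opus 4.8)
The plan is to read this statement as a Tannakian reformulation of Theorem~\ref{GK-etale} 2): all the geometric content already lies there, and what remains is to translate, for the S-fundamental group scheme of \cite[Definition~6.1]{La2}, between finiteness of the monodromy and trivialization by a finite \'etale cover. This is the same dictionary as in Lemma~\ref{lem:finiteness}, now carried out in the Tannakian category of numerically flat bundles rather than in ${\sf Strat}$.

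First I would translate the hypothesis. Fix $s\in U$ and set $G_s:=\pi _S(\langle E_s\rangle , \sigma (s))$, a finite \'etale $k$-group scheme of order prime to $p$. Running Nori's construction exactly as in the proof of Lemma~\ref{lem:finiteness} 2), but inside the category of numerically flat bundles spanned by $E_s$, the regular representation of $G_s$ yields a $G_s$-torsor $\pi _s: Y_s\to X_s$; since $G_s$ is \'etale this torsor is a finite \'etale Galois covering with Galois group $G_s(k)$ of order ${\rm length}_k\, k[G_s]$ prime to $p$, and $\pi _s^*E_s$ is trivial. Hence the hypotheses of Theorem~\ref{GK-etale} 2) hold (recall $U$ is open and $k$ is not algebraic over its prime field), and I would apply it.

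Theorem~\ref{GK-etale} 2) then produces a finite \'etale cover $Z_{\bar \eta}\to X_{\bar \eta}$ of order prime to $p$, factoring as a Kummer cover of a prime-to-$p$ Galois cover, over which $E_{\bar \eta}$ becomes a direct sum of trivial line bundles, hence trivial. To translate this conclusion back I would use that the S-fundamental group scheme surjects onto the \'etale fundamental group scheme and that a numerically flat bundle trivialized by a finite \'etale cover is precisely one whose associated representation factors through this \'etale quotient. Since $E_{\bar \eta}$ is trivialized by $Z_{\bar \eta}\to X_{\bar \eta}$, the group $\pi _S(\langle E_{\bar \eta}\rangle , \sigma (\bar \eta))$ is the image of a finite \'etale group scheme, hence is itself finite \'etale, which is the assertion. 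Concretely, after replacing $Z_{\bar \eta}$ by its Galois closure inside the maximal prime-to-$p$ quotient of $\pi _1(X_{\bar \eta}, \sigma (\bar \eta))$ --- which keeps it finite \'etale of prime-to-$p$ order --- the inclusion $E_{\bar \eta}\subset (h_*\O_{Z_{\bar \eta}})^{\oplus r}$ with $h: Z_{\bar \eta}\to X_{\bar \eta}$ exhibits the monodromy as a quotient of the constant finite group $\mathrm{Gal}(Z_{\bar \eta}/X_{\bar \eta})$.

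The only point requiring genuine care is this dictionary in the S-fundamental group setting. In the forward direction one must know that a \emph{finite \'etale} monodromy group scheme produces an honest finite \'etale covering rather than a merely infinitesimal torsor; this is precisely where the \'etaleness of $G_s$ enters, mirroring the role of \cite[Proposition~13]{dS} in Lemma~\ref{lem:finiteness}. In the converse direction one must exclude a local part of $\pi _S(\langle E_{\bar \eta}\rangle , \sigma (\bar \eta))$, which follows from the compatibility of $\pi _S$ with the \'etale fundamental group scheme recalled above. Everything else is formal, so I expect no obstacle beyond careful bookkeeping of these Tannakian identifications.
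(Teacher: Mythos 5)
Your proposal is correct and is exactly the argument the paper intends: the paper states this theorem without a separate proof, presenting it as a Tannakian reformulation of Theorem~\ref{GK-etale}~2), and your write-up simply makes that dictionary explicit (finite \'etale prime-to-$p$ monodromy $\Leftrightarrow$ trivialization by a prime-to-$p$ Galois \'etale cover, via the Nori-type construction of Lemma~\ref{lem:finiteness}). The points you flag as needing care --- \'etaleness of $G_s$ to get an honest covering, and excluding an infinitesimal part of the generic monodromy --- are indeed the only non-formal ingredients, and your treatment of them is sound.
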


\section{Verschiebung divisible points on abelian varieties: on the
theorem by M. Raynaud} \label{Raynaud}

Let $K$ be an arbitrary field of positive characteristic $p$ and
let $A$ be an abelian variety defined over $K$. The multiplication
by $p^n$ map $[p^n]: A\to A$ factors through the relative
Frobenius morphism $F_{A/K}^n: A\to A^{(n)}$ and hence  defines
the \emph{ Verschiebung morphism} $V^n: A^{(n)} \to A$ such that
$V^nF_{A/K}^n=[p^n]$.

\begin{Definition}
A $K$-point $P$ of  $A$ is said to be \emph{V-divisible} if for
every positive integer $n$ there exists a $K$-point $P_n$ in
$A^{(n)}$ such that $V^n(P_n)=P$.
\end{Definition}

 Let $T$ be an integral noetherian separated scheme of
dimension $1$ with field of rational functions $K$. Let us recall
that a smooth, separated group scheme of finite type $\A\to T$ is
called a \emph{N\'eron model} of $A$ if the general fiber of
$\A\to T$ is isomorphic to $A$ and for every smooth morphism $X\to
T$, a morphism $X_K\to \A_K$ extends  {(then uniquely)} to a
$T$-morphism $X\to \A$.

\medskip
Assume that the base field $K$ is the function field of a normal
projective variety $S$ defined over a field $k$ of positive
characteristic $p$.

We say that $A$ has \emph{a good reduction} at a codimension $1$
point $s\in S$ if the N\'eron model of $A$ over $\Spec \O _{S,s}$
is an abelian scheme { (the usual definition is slightly different
as it assumes that the identity component of the special fibre of
the N\'eron model is an abelian variety; it is equivalent to the
above one by \cite[7.4, Theorem 5]{BLR}).}
 We say that $A$
has \emph{ potential good reduction} at a codimension $1$ point
$s\in S$ if there exists a finite Galois extension $K'$ of $K$
such that if $S'$ is the normalization of $S$ in $K'$ then
$A_{K'}$ has good reduction at every codimension $1$ point $s'\in
S'$ lying over $s$.

We say that $A$ has \emph{(potential) good reduction} if it has
(potential) good reduction at every codimension $1$ point of $S$.
Assume
that $A$ has good reduction at every codimension $1$ point of $S$.
Then there exists a {\it big open} subset $U\subset S$ (i.e., the
codimension of the complement of $U$ in $S$  is $\ge 2$) and an
abelian $U$-scheme $\A \to U$. Note that the group $A(K)$ of
$K$-points of $A$ is isomorphic via the restriction map to the
group of rational sections $ U\dashrightarrow \A $ of $\A\to U$
defined over some big open subset of $U$.  The section
corresponding to $P\in A(K)$ will be denoted by $\ti P: U
\dashrightarrow \A.$

Let $c\in \Pic A$ be a class of a line bundle $L$. By the
theorem of the cube $c$, satisfies the following equality:
$$m_{123}^*c-m_{12}^*c-m_{13}^*c-m_{23}^*c+m_{1}^*c+m_{2}^*c+m_{3}^*c=0,$$
where $m_I$ for $I\subset \{1,2,3\}$ is the map $A\times _KA\times
_KA\to A$ defined by addition over the factors in $I$.  (In
particular, $m_i$ is the $i$-th projection).
Combining \cite[Chapter III,~3.1]{MB} (relying on
\cite[Chapter II,~Proposition~1.2.1]{MB}), the line bundle $L \in \Pic(A)$ extends
uniquely (at least if we fix a rigidification) to a line bundle $\ti L$ over
$\A_V$ such that the class $\tilde{c}=[\ti L]\in
\Pic(\A_V)$ is cubical, i.e.,  satisfies the relation
$$\ti m_{123}^*\ti c-\ti m_{12}^*\ti c-\ti m_{13}^*\ti c-\ti m_{23}^*\ti c+\ti
m_{1}^*\ti c+\ti m_{2}^*\ti c+\ti m_{3}^*\ti c=0,$$ where
$V\subset U$ is a big open subset and where $\ti m_I$ for
$I\subset \{1,2,3\}$ is the map $\A\times _S\A\times _S\A \to \A$
defined by addition over the factors in $I$.

Now let us choose an ample line bundle $H$ on $S$. Then
the map ${\hat h}_c :A(K)\to \ZZ$ given by
$$ {\hat h}_c (P)= {\deg}_H {(\tilde P -\tilde 0)}^*\tilde{c} $$
is well defined as $\tilde P$ is defined on a big open subset of $S$ and
${\ti P}^*\ti L$ extends to a rank $1$ reflexive sheaf on
$S$. This map is the canonical (N\'eron--Tate) height of $A$ corresponding to $c$
(see \cite[Chapter III, Section 3]{MB}).

\medskip

The following theorem was suggested to the authors by M. Raynaud
(in the good reduction case over a curve $S$,  and with a somewhat
different proof).

\begin{Theorem} \label{thm:height}
Assume that $A$ has potential good reduction.
If $P\in A(K)$ is V-divisible and $c$ is symmetric then
$\hat{h}_c (P)=0$.
\end{Theorem}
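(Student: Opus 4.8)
The plan is to compare the canonical height $\hat h_c$ on $A$ with the canonical heights on the Frobenius twists $A^{(n)}$, and to exploit the fact that such heights take values in $\tfrac1e\ZZ$ for a single fixed denominator $e$, so that an identity forcing arbitrarily high $p$-divisibility collapses the height to $0$.

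First I would record the relevant morphisms and classes. For each $n$ the projection $W^n=W^n_{A/K}\colon A^{(n)}\to A$ is a homomorphism with $F_A^n=W^n\circ F^n_{A/K}$, where $F_A$ is the absolute Frobenius, and I set $c^{(n)}:=(W^n)^*c\in\Pic(A^{(n)})$. Since $c$ is symmetric and $W^n$ intertwines $[-1]_{A^{(n)}}$ with $[-1]_A$, the class $c^{(n)}$ is again symmetric. Because good reduction is preserved under Frobenius base change, $A^{(n)}$ again has potential good reduction, so the canonical height $\hat h_{c^{(n)}}$ is defined by the same cubical construction over the (Frobenius twist of the) abelian model used for $A$; in particular there is a single integer $e\ge 1$, independent of $n$, with $e\,\hat h_{c^{(n)}}(Q)\in\ZZ$ for all $Q\in A^{(n)}(K)$, since after a fixed finite cover $S'\to S$ of degree $e$ the height is the $H$-degree of a reflexive rank one sheaf on $S'$. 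I also recall the two identities $V^nF^n_{A/K}=[p^n]_A$ and $F^n_{A/K}V^n=[p^n]_{A^{(n)}}$, together with the line bundle identity $(F^n_{A/K})^*c^{(n)}=(F_A^n)^*c=p^n c$, the last equality because the absolute Frobenius acts on line bundles by the $p^n$-th tensor power.

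Next I would establish the height transformation formula
\[
\hat h_c\bigl(V^n(Q)\bigr)=p^n\,\hat h_{c^{(n)}}(Q)\qquad\text{for all }Q\in A^{(n)}(K).
\]
This follows by combining three standard properties of the canonical height for the symmetric class $c^{(n)}$: quadraticity gives $\hat h_{c^{(n)}}\bigl([p^n]_{A^{(n)}}Q\bigr)=p^{2n}\hat h_{c^{(n)}}(Q)$; the identity $[p^n]_{A^{(n)}}=F^n_{A/K}\circ V^n$ rewrites the left-hand argument as $F^n_{A/K}(V^nQ)$; and functoriality of the canonical height under the homomorphism $F^n_{A/K}$ together with $(F^n_{A/K})^*c^{(n)}=p^n c$ gives $\hat h_{c^{(n)}}\bigl(F^n_{A/K}(R)\bigr)=p^n\hat h_c(R)$ with $R=V^nQ$. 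Equating the two expressions yields $p^{2n}\hat h_{c^{(n)}}(Q)=p^n\hat h_c(V^nQ)$, which is the claimed formula after dividing by $p^n$. Finally I would feed in the hypothesis: by $V$-divisibility there is, for each $n$, a point $P_n\in A^{(n)}(K)$ with $V^n(P_n)=P$, so the formula gives $\hat h_c(P)=p^n\,\hat h_{c^{(n)}}(P_n)$. Since $e\,\hat h_{c^{(n)}}(P_n)\in\ZZ$, the fixed integer $e\,\hat h_c(P)$ is divisible by $p^n$ for every $n$, hence is $0$, whence $\hat h_c(P)=0$. (Equivalently, the nonzero elements of $\tfrac1e\ZZ$ have absolute value at least $1/e$, while $\hat h_{c^{(n)}}(P_n)=p^{-n}\hat h_c(P)\to 0$.)

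The \textbf{main obstacle} is the bookkeeping in the first paragraph: one must check that the canonical heights on the twists $A^{(n)}$ arise from a \emph{uniform} cubical datum, i.e. that potential good reduction, and hence the denominator $e$, can be taken independent of $n$, and that the canonical (cubical) height is functorial under the relative Frobenius $F^n_{A/K}$ at the level of the extended line bundles over the abelian model rather than merely on generic fibres. Once this compatibility is secured, the quadratic identity and the closing $p$-divisibility argument are routine.
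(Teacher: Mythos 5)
Your proposal is correct and follows essentially the same route as the paper: both derive the key identity $\hat h_c(P)=p^n\,\hat h_{c_n}(P_n)$ from the Frobenius--Verschiebung factorization of $[p^n]$, the symmetry of $c$, the formula $(F^n_{A/K})^*c_n=p^nc$, and functoriality of the canonical height, and both conclude by playing the resulting $p^n$-divisibility against the (essentially integral) values of the height. The only cosmetic differences are that the paper obtains the identity by showing the class $(V^n)^*c-p^nc_n$ is killed by the isogeny $F^n_{A/K}$, hence is torsion and has vanishing height, rather than via your quadraticity argument, and it handles potential good reduction by a final descent through the finite cover $S'\to S$ rather than by carrying your uniform denominator $e$ throughout.
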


\begin{proof}
Let us first assume that $A$ has good reduction. By assumption
there exists a $K$-point $P_n$ of $A^{(n)}$ such that
$V^n(P_n)=P$. { Since $\A\to U$ is an abelian scheme, so is
$\A^{(n)}\to U$, thus $P_n$ is the restriction to $\Spec K$  of
$\tilde P_n\in \A^{(n)}(U)$.   }

Let us factor the absolute Frobenius morphism $F^n_A$ into the
composition of the relative Frobenius morphism $F^n_{A/K}: A\to
A^{(n)}$ and $W_n: A^{(n)}\to A$. Let us set $c_n=W_n^*c$. { Its
cubical extension $\tilde c_n\in \Pic(\A^{(n)}_{V_n})$, for some
big open $V_n\subset U$, together with $H$ allows one to define
$\hat{h}_{c_n} (P_n)$ by the corresponding formula.  } Since
$(F^n_A)^*c=p^nc$, we have $(F^n_{A/K})^*c_n=p^nc$. On the other
hand, since $c$ is symmetric, we have $[p^n]^*c=p^{2n}c$ and hence
$(F^n_{A/K})^* ((V^n)^*c)=p^{2n}c$. Therefore
$$(F^n_{A/K})^* ((V^n)^*c-p^nc_n)=0.$$
Since $F^n_{A/K}$ is an isogeny this implies that the class
$d=(V^n)^*c-p^nc_n$ is torsion. By additivity and functoriality of the canonical height (see
\cite[Theorem, p.~35]{Se}) we have
$$\hat{h}_c (P)= \hat{h}_{(V^n)^*c}(P_n)=
\hat{h}_{p^nc_n}(P_n)+\hat{h}_{d}(P_n) = p^n \cdot \hat{h}_{c_n}
(P_n)$$ (note that additivity implies that
$\hat{h}_{md}=m\hat{h}_{d}$, so since $md=0$ for some $m$, we get
$\hat{h}_{d}=0$).
Therefore if $\hat{h}_c (P)\ne 0$ then $|\hat{h}_c (P)|\ge {p^n}$ and we get a contradiction if $n$ is
sufficiently large.

Now let us consider the general case. Since there { exist} only
finitely many codimension $1$ points $s\in S$ at which $A$ has bad
reduction, one can find a finite Galois extension $K'$ of $K$ such
that if $S'$ is the normalization of $S$ in $K'$ then $A_{K'}$ has
good reduction at every codimension $1$ point $s'\in S'$. { On the
other hand, if $P \in A(K)$ is $V$-divisible on $A$, $P\otimes K'
\in A(K')$ is $V$ divisible on $A_{K'}$.} Then by the above we
have $\hat{h}_{\pi^*c} (P')=0$ and functoriality of the canonical
height implies that $\hat{h}_c (P)=0$.
\end{proof}

\medskip

\begin{Remark}
It is an interesting problem { whether}  {
Theorem~\ref{thm:height} }holds for an arbitrary abelian variety
$A/K$. {  Its} proof shows that one can use the semiabelian
reduction theorem to reduce the general statement to the case when
$A$ has semiabelian reduction (see \cite[7.4, Theorem 1]{BLR}).
\end{Remark}

\medskip

Now assume that $S$ is geometrically connected. Then the extension
$k\subset K$ is regular (i.e.,  $K/k$ is separable and $k$ is
algebraically closed in $K$). Let $(B, \tau)$ be the $K/k$-trace
of the abelian $K$-variety $A$, where $B$ is an abelian
$k$-variety and $\tau : B_K\to A$ is a homomorphism of abelian
$K$-varieties (it exists by \cite[Theorem~6.2]{Co}). Let us recall that by
definition $(B, \tau)$ is a final object in the category of pairs
consisting of an abelian $k$-variety and a $K$-map from the scalar
$K$-extension of this variety to $A$.

Since the extension $k\subset K$ is regular, the kernel $K$-group
scheme of $\tau$ is connected (with connected dual)
(\cite[Theorem~6.12]{Co}). Therefore $\tau$ is injective on
$K$-points and in particular we can treat $B(k)$ as a subgroup of
$A(K).$

\begin{Corollary}\label{V-divisibility}
Assume that $A$ has potential good reduction.
If $P\in A(K)$ is V-divisible then  $[P]\in (A(K)/B(k))_{\tors}$.
In particular, if $k$ is algebraically closed then $P\in
B(k)+A(K)_{\tors}\subset A(K)$.
\end{Corollary}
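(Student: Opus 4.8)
The plan is to deduce the corollary from Theorem~\ref{thm:height} together with the standard positivity of the N\'eron--Tate height over a function field. First I would fix a \emph{symmetric ample} class $c\in\Pic A$: starting from any ample line bundle $H_0$ on $A$, the class $c=[H_0]+[(-1)^*H_0]$ is ample (since $(-1)$ is an automorphism, $(-1)^*H_0$ is again ample, and a sum of ample classes is ample) and manifestly symmetric. Because $P$ is $V$-divisible and $A$ has potential good reduction, Theorem~\ref{thm:height} applies to this $c$ and yields $\hat{h}_c(P)=0$.

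The second step is to translate the vanishing $\hat{h}_c(P)=0$ into the asserted membership. Here I would invoke function-field Mordell--Weil theory: by the Lang--N\'eron theorem the group $A(K)/\tau(B(k))$ is finitely generated, and the canonical height attached to the symmetric ample class $c$ is a positive semi-definite quadratic form on $A(K)$ whose kernel is exactly $\tau(B(k))+A(K)_{\tors}$; equivalently, $\hat{h}_c$ induces a positive-definite form on $(A(K)/\tau(B(k)))\otimes_{\ZZ}\RR$ modulo torsion (see \cite{L}, \cite{Se}). Combined with $\hat{h}_c(P)=0$, this forces $P\in\tau(B(k))+A(K)_{\tors}$. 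Since $\tau$ is injective on $K$-points we identify $B(k)$ with $\tau(B(k))\subset A(K)$, and the previous line says precisely that $[P]\in(A(K)/B(k))_{\tors}$, which is the first assertion.

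For the final clause, suppose $k$ is algebraically closed, so that $B(k)$ is a divisible group (multiplication by any $m$ is a surjective isogeny of an abelian variety over an algebraically closed field, hence surjective on $k$-points). From $[P]\in(A(K)/B(k))_{\tors}$ choose $m\ge 1$ with $mP\in B(k)$; by divisibility pick $b\in B(k)$ with $mb=mP$. Then $m(P-b)=0$, so $t:=P-b\in A(K)_{\tors}$ and $P=b+t\in B(k)+A(K)_{\tors}$, as claimed.

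The main obstacle is really the input to the second step: one must know that a symmetric \emph{ample} class produces a height that is positive-definite modulo the trace and torsion, so that height zero is equivalent to triviality in $A(K)/\tau(B(k))$ up to torsion. Everything else---choosing a symmetric ample $c$, quoting Theorem~\ref{thm:height}, and the divisibility argument---is formal. I would also take care to confirm that the height $\hat{h}_c$ constructed before Theorem~\ref{thm:height} via cubical extensions agrees with the classical N\'eron--Tate height, so that its positivity may legitimately be invoked.
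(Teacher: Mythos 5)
Your proposal is correct and takes essentially the same route as the paper: choose a symmetric ample class, apply Theorem~\ref{thm:height} to get $\hat{h}_c(P)=0$, invoke positivity of the canonical height modulo trace and torsion to get the first assertion, and finish with the identical divisibility argument in $B(k)$. The only difference is bibliographic: for the key positivity input (kernel of $\hat{h}_c$ equals $\tau(B(k))+A(K)_{\tors}$) the paper cites \cite[Theorem~9.15]{Co}, which is stated precisely for regular extensions $K/k$ in positive characteristic, rather than the classical references \cite{L}, \cite{Se}.
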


\begin{proof}
We can choose the class $c\in \Pic (A)$ so that it is ample and
symmetric. Then the first part of the corollary follows from
Theorem~\ref{thm:height}  and \cite[ Theorem~9.15]{Co} (which is
true for regular extensions $K/k$).

To prove the second part take positive integer $m$ such that
$mP=Q\in B(k)$. Since $k$ is algebraically closed, the set $B(k)$
is divisible and there exists $Q'\in B(k)$ such that $mQ'=Q$. Then
$P=Q'+(P-Q')$, where $m(P-Q')=0$.
\end{proof}

\medskip

Let us assume that the field $k$ is algebraically closed. It is an
interesting question { whether}  a $V$-divisible $K$-point $P$ of
$A$ can be written as a sum of $Q+R$, where $Q\in B(k)$ and $R\in
A(K)_{\tors}$ is torsion {\it of order prime-to-}$p$.

By the Lang--N\'eron theorem (\cite[Theorem~2.1]{Co}), the  groups
${A}^{(i)}(K)/ {B}^{(i)}(k)$ are finitely generated. It follows
that the groups $G_i=({A}^{(i)}(K)/ {B}^{(i)}(k))_{\tors}$ are
finite.

Note that the homomorphism $ {B}(k)\to {B}^{(i)}(k)$ induced by
$F^i_{B/k}$ is a bijection.{ One has a factorization $F^i_{A/K}:
{A}(K^{1/{p^i}})\to  A^{(i)}(K) \to {A}^{(i)}(K^{1/p^i})$,
inducing a bijection ${A}(K^{1/{p^i}})\to A^{(i)}(K)$. Thus in
particular,
$$F_i:{A}(K)/{B}(k)\to { A}^{(i)}(K)/{B}^{(i)}(k)$$
is injective. }

Moreover, the Verschiebung morphism induces the homomorphisms
$$V_i:{A} ^{(i)}(K)/{B}^{(i)}(k)\to {A}(K)/{B}(k)$$
such that $V_iF_i=p^i$ and $F_iV_i =p^i$. This shows that
prime-to-$p$ torsion subgroups of groups $G_i$ are isomorphic and
in particular have the same order $m$.

Now let us assume that orders of the $p$-primary torsion subgroups
of the abelian groups $G_i$ are uniformly bounded by some $p^e$.
Then for all $i\ge e$
$$F_i(m[P])=F_i(V_i(m[P_i]))=p^im[P_i]=0 .$$
This implies that $m[P]=0$, so $mP\in B(k)$. Now $B(k)$ is a
divisible group so there exists some $Q'\in B(k)$ such that
$mP=mQ$. Then $R=P-Q\in A(K)$ is torsion of order prime to $p$. So
we conclude

\begin{Lemma}
If the order of the $G_i$ is bounded as $i$ goes to infinity, under the
assumption the Theorem~\ref{thm:height}, there exists a positive
integer $m$, prime to $p$ and such that  $m\cdot P_i \in B(k)$ for every integer
$i$.
\end{Lemma}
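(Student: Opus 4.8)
The plan is to prove the equivalent statement that $m\cdot[P_i]=0$ in $G_i$ for every $i$, where $m$ is the common order of the prime-to-$p$ torsion subgroup $(G_i)_{p'}$ (independent of $i$, as shown above); via the canonical bijection $B(k)\xrightarrow{\sim}B^{(i)}(k)$ induced by $F^i_{B/k}$, this is exactly the assertion $m\cdot P_i\in B(k)$. I would first reduce the boundedness hypothesis to a usable form: since $(G_i)_{p'}$ already has fixed order $m$, ``$|G_i|$ bounded'' is equivalent to the existence of a single $e$ with $|(G_i)_p|\le p^e$, hence with $(G_i)_p$ of exponent $\le p^e$, for all $i$. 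Under the hypotheses of Theorem~\ref{thm:height} (so that $A$, and each twist $A^{(i)}$, has potential good reduction and $c$ may be taken ample and symmetric), Corollary~\ref{V-divisibility} gives that $[P]$ is torsion, and the computation preceding the Lemma even yields $m[P]=0$, so $[P]\in(G_0)_{p'}$.

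Next I would separate the prime-to-$p$ part of each root. From $F_iV_i=p^i$ and $V_i([P_i])=[P]$ one gets $p^i[P_i]=F_i([P])$, which is torsion of order dividing $m$ because $F_i$ is injective; hence $[P_i]\in G_i$. Since $V_iF_i=F_iV_i=p^i$ is a unit on prime-to-$p$ parts and all $(G_i)_{p'}$ have the same order $m$, the induced maps $F_i,V_i$ restrict to isomorphisms on the groups $(G_\bullet)_{p'}$; thus the prime-to-$p$ component $x_i$ of $[P_i]$ is pinned down by $V_i(x_i)=[P]$ and is killed by $m$. Writing $[P_i]=x_i+y_i$ with $y_i\in(G_i)_p$, the identity $m[P_i]=m\,y_i$ shows the Lemma is equivalent to $y_i=0$: everything reduces to producing a Verschiebung root of $P$ at level $i$ with trivial $p$-primary class.

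The engine of the proof is a vanishing statement. Writing $F_{i,j},V_{i,j}$ for the maps between $G_i$ and $G_{i+j}$ induced by the $j$-fold Frobenius and Verschiebung (so $F_{i,j}V_{i,j}=V_{i,j}F_{i,j}=p^j$ and, by the same argument as for $F_i$ applied to $A^{(i)}$, $F_{i,j}$ is injective), the bound on the exponent forces, for $j\ge e$, that $p^j$ kills $(G_{i+j})_p$; then $F_{i,j}V_{i,j}=0$ on $(G_{i+j})_p$ and injectivity of $F_{i,j}$ give $V_{i,j}|_{(G_{i+j})_p}=0$. For $i=0$ this is precisely the computation $F_i(m[P])=p^i m[P_i]=0$ already used. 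To treat general $i$, I would compare roots across twists: from $V^{i+j}(P_{i+j})=P=V^i(P_i)$ and the factorisation $V^{i+j}=V^i\circ V^j$ (with $V^j\colon A^{(i+j)}\to A^{(i)}$) one gets $V^j(P_{i+j})=P_i+t_j$ with $t_j\in(\ker V^i)(K)$, a $p$-power torsion point annihilated by $p^i$ since $F^iV^i=p^i$. Passing to $G_i$ gives $[P_i]+[t_j]=V_{i,j}([P_{i+j}])$; taking $p$-primary parts, the right-hand side vanishes for $j\ge e$ by the displayed vanishing of $V_{i,j}$, so $y_i=-[t_j]$ lies in the image of $(\ker V^i)(K)$ in $(G_i)_p$. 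Replacing the root $P_i$ by $P_i+t_j$ — still a root of $P$, as $t_j\in\ker V^i$ — yields a Verschiebung root whose class in $G_i$ is $x_i\in(G_i)_{p'}$, whence $m\cdot P_i\in B^{(i)}(k)=B(k)$ after this admissible adjustment, with the single integer $m$ serving all $i$ simultaneously.

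The main obstacle is exactly the non-uniqueness of Verschiebung roots: the definition of $V$-divisibility only supplies, for each $i$, \emph{some} $P_i$ with $V^i(P_i)=P$, determined merely up to the finite $p$-power torsion group $(\ker V^i)(K)$, and a careless root may carry a nonzero $p$-primary class $y_i$ that $m$ does not kill. The decisive point is that the uniform bound $p^e$ on the $p$-primary parts — equivalently the boundedness of the full $G_i$ — is precisely what makes Verschiebung annihilate those $p$-parts after finitely many twists; this both pins down the prime-to-$p$ component and identifies the obstruction $y_i$ with the removable ambiguity $-[t_j]$, closing the argument. I expect the bookkeeping of these Frobenius/Verschiebung relations across the tower of twists, and the verification that the adjusted point remains a genuine root, to be the only delicate part.
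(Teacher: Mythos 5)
Your argument is correct, and its engine is exactly the paper's: the relations $V_{i,j}F_{i,j}=F_{i,j}V_{i,j}=p^j$ together with the injectivity of the $F_{i,j}$ show that all the groups $(G_i)_{p'}$ have the same order $m$, and the uniform bound $p^e$ on the $p$-primary parts forces $p^j m[P_{i+j}]=0$, hence $m[P_i]=0$, for $j\ge e$. Where you genuinely depart from the paper is in the treatment of the indices $i>0$. The paper runs this computation once, at level $0$ (obtaining $m[P]=0$ and the decomposition $P=Q'+R$), and tacitly applies it verbatim at every level; that is legitimate exactly when each $P_i$ is itself $V$-divisible, e.g.\ when the roots form a coherent tower $V(P_{i+1})=P_i$ --- which is what happens in the situation the lemma is designed for, the proof of Theorem~\ref{stratified-line-bundles}, where $P_i$ is the class of $(L_i)_\eta$ and coherence comes from the stratification. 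You instead allow arbitrary roots, observe that then $[P_i]$ is determined only modulo the image of $(\ker V^i)(K)$ in $G_i$, which is $p$-primary and therefore invisible to the prime-to-$p$ integer $m$, and you repair this by computing $y_i=-[t_j]$ and replacing $P_i$ by the admissible root $P_i+t_j$. Thus what you actually prove is the existence of a system of Verschiebung roots with $mP_i\in B(k)$ for all $i$; this is the correct formulation for incoherent roots, for which the literal statement is sensitive to the choice of the $P_i$ (a nonzero $K$-rational point of $\ker V^i$ not lying in $B^{(i)}(k)$ could be added to $P_i$ without disturbing $V^i(P_i)=P$, and $m$ cannot kill its class), while the paper's reading is the statement for the coherent towers it actually needs. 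The two versions deliver the same thing in the application; yours makes explicit, and correctly disposes of, an ambiguity about which the paper is silent.
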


Note that the above assumption on $G_i$ is satisfied, e.g., if $A$ is an
elliptic curve over the function field $K$ of a smooth curve over
$k=\bar k$. If $A$ is isotrivial then the assertion is clear. If
$A$ is not isotrivial then the $j$-invariant of $A$ is
transcendental over $k$. In this case $A(K^{\rm perf})_{\rm tors}$ is
finite (see \cite{Le}) so orders of the groups $G_i=A^{(i)}(K)_{\tors}$
are uniformly bounded.

\section{Stratified bundles}

In this section we use the height estimate of the previous section
and the fact that torsion stratified line bundles on a perfect
field have order prime to $p$ (apply Proposition~\ref{prop:BD}
together with Lemma~\ref{lem:finiteness}).

Let $k$ be an algebraically closed field of positive
characteristic $p$. Let $f: X\to S$ be a smooth projective
morphism of $k$-varieties with geometrically connected fibres.
Assume that $S$ is projective, { which surely is a very strong
assumption. Indeed, if $k \neq \FF_p$ , and in the statement of
Theorem~\ref{stratified-line-bundles}, $S'$ is open, then one
obtains the stronger Theorem~\ref{stratified-vb}. } For
simplicity, let us also assume that $f$ has a section $\sigma:
S\to X$. Consider the torsion component $\Pic ^{\tau} (X/S)\to S$
of identity of $\Pic (X/S)\to S$. Let $\varphi _n: \Pic (X/S)\to
\Pic (X/S)$ be the multiplication by $n$ map. Then there exists an
open subgroup scheme $\Pic ^{\tau} (X/S)$ of $\Pic (X/S)$ such
that every geometric point $s$ of $S$ the fibre of $\Pic ^{\tau}
(X/S)$ over $s$ is the union
$$\bigcup _{n>0}\varphi _n ^{-1} (\Pic ^0(X_s)),$$
where $\Pic ^0(X_s)$ is the connected component of the identity of
$\Pic (X_s/s)$. It is well known that $\Pic ^{\tau} (X/S)\to S$ is
also a closed subgroup scheme of $\Pic (X/S)$. Moreover, the
morphism $\Pic ^{\tau} (X/S)\to S$ is projective and the formation
of $\Pic ^{\tau} (X/S)\to S$ commutes with a base change of $S$
(see, e.g., \cite[Theorem 6.16~and~Exercise 6.18]{Kl}).

We assume that $\Pic ^{0}(X_{s})$ is reduced for every point $s\in
S$.

\begin{Theorem}\label{stratified-line-bundles}
Let $\LL=\{L_i, \sigma _i\}$ be a relatively stratified line
bundle on $X/S$. Assume that there exists a dense subset
$S'\subset S(k)$ such that for every $s\in S'$ the stratified
bundle $\LL _s=\LL |_{X_s}$ has finite monodromy. Then $\LL _{\bar
\eta}$ has finite monodromy.
\end{Theorem}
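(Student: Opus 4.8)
The plan is to translate the relative stratification into a Verschiebung-divisibility statement for a point of the relative Picard scheme, and then to run the height theorem of Section~\ref{Raynaud} together with the fibrewise hypothesis. First I would put myself in a convenient situation: after normalizing $S$ (which changes neither $K=k(S)$ nor $\bar\eta$) I may assume $S$ is normal projective, and since $\Pic ^0(X_s)$ is reduced for every $s$ the relative Picard scheme $A:=\Pic ^0(X/S)\to S$ is a smooth projective group scheme, i.e.\ an abelian scheme extending $A_K=\Pic ^0(X_K/K)$ over all of $S$; in particular $A$ has good reduction at every codimension~$1$ point, so the hypotheses of Theorem~\ref{thm:height} and Corollary~\ref{V-divisibility} are met, and $K/k$ is regular. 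Because each $\LL _s$ has finite monodromy, $L_s$ is torsion of order prime to $p$ (the opening remark of this section), hence numerically trivial; as the numerical class is locally constant in the flat family, $L_{\bar\eta}$ is numerically trivial as well, and after replacing $L$ by a prime-to-$p$ power (the order of its N\'eron--Severi class) I may assume $P:=[L_K]\in A(K)$, whose base change over $\bar K$ is $[L_{\bar\eta}]$.

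The crucial observation is that the relative stratification forces $P$ to be $V$-divisible. Indeed, pullback along the relative Frobenius $F_{X/S}\colon X\to X^{(1)}$ restricts on $\Pic ^0$ to a homomorphism $A^{(1)}=\Pic ^0(X^{(1)}/S)\to A$; by duality between $\Pic ^0(X/S)$ and $\Alb(X/S)$, and the fact that the dual of relative Frobenius is Verschiebung, this map is exactly the Verschiebung $V\colon A^{(1)}\to A$, and likewise for every twist $X^{(i)}$. Setting $P_i:=[L_{i,K}]\in A^{(i)}(K)$, the isomorphisms $\sigma _i\colon F_{X^{(i)}/S}^*L_{i+1}\xrightarrow{\sim}L_i$ then read $P_i=V(P_{i+1})$, so $P=V^{n}(P_n)$ for all $n$, i.e.\ $P$ is $V$-divisible. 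Corollary~\ref{V-divisibility} (available since $k=\bar k$) now yields a decomposition $P=Q+R$ with $R\in A(K)_{\tors}$ and $Q=\tau(Q_0)$, $Q_0\in B(k)$ lying in the image of the $K/k$-trace $(B,\tau)$.

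It remains to prove that $P$ is itself torsion of order prime to $p$, and this is exactly where the fibrewise hypothesis must be brought in to control the constant part $Q$. I would argue as follows. Since $\tau$ is injective on $K$-points its kernel is infinitesimal, so over a dense open $W\subset S$ the fibre map $\tau _s\colon B\to A_s$ is injective on $k$-points. Picking $s\in S'\cap W$ (non-empty, as $S'$ is dense and $W$ is open dense) and choosing $m$ with $mR=0$, specialization of the identity $mP=\tau(mQ_0)$ gives $\tau _s(mQ_0)=m[L_s]$, which is torsion; injectivity of $\tau _s$ then makes $mQ_0$, hence $Q$, torsion, so $P=Q+R$ is torsion. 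Finally the $p$-primary part $P''$ of $P$ is a section of some $A[p^k]$ over a big open of $S$ which, by the prime-to-$p$ hypothesis on the fibres, vanishes at every $s\in S'$; a section of a separated quasi-finite group scheme that vanishes on a dense subset of an integral base is the zero section, so $P''=0$ and $P$ is torsion of order prime to $p$. Consequently $L_{\bar\eta}$ is \'etale trivializable (the corollary to Proposition~\ref{prop:BD}), and $\LL _{\bar\eta}$ has finite monodromy by Lemma~\ref{lem:finiteness}.

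I expect the genuine obstacle to be the constant (trace) part: the height machinery of Theorem~\ref{thm:height} cannot detect it, since points coming from $B(k)$ have height zero, so one is forced to exploit the fibrewise data geometrically rather than through heights. The delicate step is to ensure that the prescribed dense set $S'$ actually meets the locus where $\tau _s$ is injective on $k$-points, so that torsion of $m[L_s]$ propagates to torsion of $Q_0$; for $k=\bar\FF_p$ this is automatic because $B(\bar\FF_p)$ is already torsion, while in general it rests on the infinitesimality of $\ker\tau$, and the interaction of the varying kernels $\ker\tau _s$ with an arbitrary dense subset is the point requiring the most care.
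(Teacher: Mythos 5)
Your overall strategy is the same as the paper's: convert the stratification into $V$-divisibility of the points $P_i=[L_{i,K}]\in \hat A^{(i)}(K)$ of the Picard scheme, invoke Corollary~\ref{V-divisibility}, kill the trace part using the prime-to-$p$ torsion hypothesis on closed fibres, and kill the $p$-primary part by a specialization/density argument. Your method for the trace part is genuinely different from the paper's: you spread out the trace morphism $\tau$ and use that its kernel, being infinitesimal at the generic point, has infinitesimal fibres over a dense open (so $\tau_s$ is injective on $k$-points), whereas the paper dualizes to the Albanese side, realizes the trace part $Q_i$ as a line bundle $M_i$ on $B^{(i)}$, and kills it by the projection formula along $\A_s^{(i)}\to B^{(i)}$. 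Your variant can be made to work (the key point, that a finite flat group scheme over an integral base with infinitesimal generic fibre has infinitesimal closed fibres, follows from lower semicontinuity of the number of geometric points in the fibres of a finite flat morphism), though note your inference runs backwards: injectivity of $\tau$ on $K$-points does not imply the kernel is infinitesimal; rather, connectedness of $\ker\tau$ is what \cite[Theorem~6.12]{Co} gives, and it implies both.

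However, there is a genuine gap at the very last step. You prove only that $P_0=[L_0|_{X_K}]$ is torsion of order prime to $p$, and then conclude that $\LL_{\bar\eta}$ has finite monodromy ``by Lemma~\ref{lem:finiteness}''. This implication is false. Lemma~\ref{lem:finiteness}~1) requires a finite \'etale cover trivializing the \emph{stratified} bundle, and finiteness of the monodromy of $\LL_{\bar\eta}$ is equivalent to the existence of a single $n\ge 1$ with $L_i^{\otimes n}\cong \O_{X^{(i)}_{\bar\eta}}$ for \emph{all} $i$ simultaneously. Prime-to-$p$ torsion of $L_0$ alone does not bound the orders of the higher $L_i$: from $V([L_{i+1}])=[L_i]$ and $m_0[L_0]=0$ one only gets $m_0[L_i]\in\ker (V^i)\subset \hat A^{(i)}[p^i]$, so the order of $[L_i]$ is a priori only bounded by $m_0p^i$. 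This is not an artifact: if $\Pic^0$ is ordinary, $\ker V$ has nontrivial \'etale points, and there exist $V$-divisible sequences with $P_0=0$ but $P_i$ of order $p^i$, i.e.\ stratified line bundles with $L_0\cong\O$ and monodromy $\GG_m$. The repair is exactly what the paper does: every $P_i$ (not just $P_0$) is $V$-divisible, and for $s\in S'$ the finite monodromy of $\LL_s$ bounds the orders of \emph{all} $(L_i)|_{X_s}$ by one prime-to-$p$ integer $a_s$; hence your trace and $p$-part arguments apply verbatim to each $P_i$, giving that each $P_i$ is torsion of order prime to $p$. One then needs the uniform bound: with $F_i:\hat A(K)\to \hat A^{(i)}(K)$ injective and $F_iV_i=p^i$, one has $p^im_0P_i=F_iV_i(m_0P_i)=F_i(m_0P_0)=0$, and since the order of $P_i$ is prime to $p$ this forces $m_0P_i=0$ for all $i$, which is precisely the statement that $\LL_{\bar\eta}^{\otimes m_0}$ is trivial.
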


\begin{proof}
Replacing $\LL$ by a power $\LL^{\otimes N}$, where $N$ is sufficiently large,
we may assume that $\LL_s\in \Pic^0(X_s)$ for all closed points $s$ in $S$
(see \cite[Corollary 6.17]{Kl}).

By assumption ${\hat \pi}: \hat \A=\Pic ^{0} (X/S)\to S$ is an
abelian scheme. Let us consider the dual abelian scheme $\A \to
S$. We have a well defined Albanese morphism $g: (X, \sigma )\to
(\A, e)$ (see \cite[Expos\'e~VI,~Th\'eor\`eme~3.3]{FGA}).
Moreover, the map $g^*: \Pic ^0(\A/S)\to \hat \A=\Pic ^0 (X/S)$ is
an isomorphism of $S$-schemes. Let us set  ${\hat A}={\hat
\A}_{\eta}$.

Let $P_i$ be the $K$-point of ${\hat A}^{(i)}$ corresponding to
$(L_i)_{\eta}$. Note that the $K$-point $P_0\in \hat A$ is
$V$-divisible. Indeed, by the definition of a relative
stratification we have $V^n(P_n)=P_0$ for all integers $n$.
Similarly, we see that all the points $P_i\in {\hat A}^{(i)}(K)$
are $V$-divisible. By Corollary \ref{V-divisibility} it follows
that $P_i\in {\hat B}^{(i)}(k)+{\hat A}^{(i)}(K)_{\tors}$, where
$({\hat B}/k, \hat \tau :{\hat B}_K\to \hat A)$ is the $K/k$-trace
of $\hat A$ (note that $({\hat B}^{(i)}/k, {\hat \tau} ^{(i)})$ is
the $K/k$-trace of ${\hat A}^{(i)}$). So for every $i\ge 0$ we can
write $P_i=Q_i+R_i$ for some $Q_i\in  {\hat B}^{(i)}(k)$ and
$R_i\in {\hat A}^{(i)}(K)_{\tors}$.

Now we transpose the above by duality. Let $A$ be the dual abelian
$K$-variety of $\hat A$ and $B$ the dual abelian $k$-variety of
$\hat B$. We have the $K/k$-images $\tau ^{(i)}: A_{\eta}^{(i)}\to
B^{(i)}_K$ and an $S$-morphism $\tau: \A \to B\times _k S$
(possibly after shrinking $S$). By abuse of notation we can treat
$L_i$ as line bundles on $\A$ because $g^*: \Pic ^0(\A/S)\to \Pic
^0 (X/S)$ is an isomorphism. Let $M_i$ be the line bundle on
$B^{(i)}$  corresponding to $Q_i$ and let $\pi _i: B^{(i)}\times
_k S\to B^{(i)}$ denote the projection. Let us fix a non-negative
integer $i$ and take a positive integer $n_i$ such that $n_i
R_i=0$. Then the line bundle $L_i^{\otimes n_i }\otimes
\tau^*\pi_i ^*M_i^{\otimes -n_i}$ has degree $0$ on every fiber of
$\A\to S$. Thus  it is trivial after restriction to $\A_{\eta}$.
Hence after shrinking $S$ we can assume that $L_i^{\otimes n_i
}\simeq \tau^*\pi_i ^*M_i^{\otimes n_i}$.

Let us fix a point $s\in S(k)$ and consider the morphism  $$
\pi_i'=(\tau^{(i)} \pi _i)_{\A^{(i)}_s}:\A^{(i)}_s\to B^{(i)}.$$
 Note that $\tau ^{(i)}$ has
connected fibres and hence  $(\pi
_i')_*\O_{\A^{(i)}_s}=\O_{B^{(i)}}$. By assumption there exists a
positive integer $a_s$, such that for every $i$ the order of the
line bundle $(L_i)_{\A_s}$ divides $a_s$. The important point is
that $a_s$ is prime to $p$.

Therefore $(\pi_i')^*M_i^{\otimes a_sn_i}\simeq \O_{A_s}$ and by
the projection formula
$$M_i^{\otimes a_sn_i}\simeq(\pi_i')_*(\pi_i')^*M_i^{\otimes
a_sn_i}\simeq (\pi_i')_*\O_{A_s}\simeq \O_{B}.$$ This implies that
$M_i$ is a torsion line bundle and hence $Q_i\in {\hat
A}^{(i)}(K)_{\tors}$. Therefore $$P_i=Q_i+R_i\in {\hat
A}^{(i)}(K)_{\tors}.$$

Let us recall that the set of $p$-torsion points of ${\hat A}(K)$
is finite. Assuming it is not empty, we can therefore find a
non-empty open subset $U\subset S$ such that for every $s\in U(k)$
and every $p$-torsion point $T\in {\hat A}(K)$ the section $\ti T$
is defined on $U$ and the point ${\ti T}(s)$ is non-zero.

Let us write the order of $P_i$ as $m_ip^{e_i}$, where $m_i$ is
not divisible by $p$. If $e_0\ge 1$ then the point
$m_0p^{e_0-1}P_0$ is $p$-torsion  in ${\hat A}(K)$. If we take
$s\in S'\cap U(k)$,  then $a_sm_0p^{e_0-1}{\ti
P}_0(s)=[L_0^{\otimes a_sm_0}]_s=0$, a contradiction. It follows
that $m_0P_0=0$. Similarly, the order of all $P_i$ is prime to
$p$.

As already mentioned in the last section,  the homomorphism ${\hat
A} (K^{1/{p^i}})\to {\hat A}^{(i)}(K)$ induced by $F^i_{A/K}$ is a
bijection. So we have an induced injection
$$F_i:{\hat A}(K)\to {\hat A}^{(i)}(K).$$
On the other hand, the Verschiebung morphism induces homomorphisms
$$V_i:{\hat A} ^{(i)}(K)\to {\hat A}(K)$$
such that $V_iF_i(P)=p^iP$ and  $F_iV_i (Q)=p^i Q$   for all $P\in
\hat A(K)$ and $Q\in {\hat A}^{(i)}(K)$. Hence
$$p^im_0P_i=F_iV_i(m_0P_i)=F_i(m_0P_0)=0$$
and since the order of $P_i$ is prime to $p$ we have $m_0P_i=0$
for all $i\ge 0$. Therefore $(L_i)_{\bar \eta}^{\otimes m_0}\simeq
\O_{X_{\bar\eta}}$ for all $i$ and the stratified line bundle $\LL
_{\bar \eta}$ has finite monodromy.
\end{proof}

\medskip

Now we fix the following notation: $k$ is an algebraically closed
field of positive characteristic $p$ and $f: X\to S$ is a smooth
projective morphism of $k$-varieties with geometrically connected
fibres.

\begin{Theorem}\label{stratified-vb}
Let $\EE=\{E_i, \sigma _i\}$ be a relatively stratified bundle on
$X/S$. Assume that there exists a dense subset $U\subset S(k)$
such that for every $s\in U$ the stratified bundle $\EE _s=\EE
|_{X_s}$ has finite monodromy of order prime to $p$.
\begin{itemize}
 \item[1)] Then there
exists a finite Galois \'etale covering $\pi _{\bar \eta}: Y_{\bar
\eta}\to X_{\bar \eta}$  of order prime-to-$p$ such that
$\pi _{\bar \eta}^*\EE_{\bar \eta}$ is a direct sum of stratified
line bundles.
\item[2)]  If {$k\neq \bar \FF_p$ } and $U$ is open in $S(k)$, then the monodromy
group
of $\EE_{\bar \eta}$ is finite,  and $\EE_{\bar \eta}$ trivializes on a finite
\'etale
cover $Z_{\bar \eta}\to X_{\bar \eta}$ which factors as a Kummer
(thus finite abelian of order prime to $p$) cover $Z_{\bar
\eta}\to Y_{\bar \eta}$ and a Galois cover $Y_{\bar \eta}\to
X_{\bar \eta}$ of order prime to $p$.
\end{itemize}

\end{Theorem}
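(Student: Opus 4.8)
The plan is to transport Andr\'e's strategy, in the form used for Theorem~\ref{GK-etale}, to the stratified setting, the bridge being Lemma~\ref{lem:finiteness}: a stratified bundle on a fibre $X_s$ with finite monodromy of order prime to $p$ is precisely a representation $\rho_s\colon\pi_1^{p'}(X_s,\sigma(s))\to\Gamma_s\subset\GL_r(k)$ of the prime-to-$p$ quotient of the \'etale fundamental group, with $\Gamma_s$ finite of order prime to $p$. For part~1 I would first run the construction of Theorem~\ref{GK-etale} verbatim for the underlying bundle $E=E_0$: Brauer--Feit produces an abelian normal subgroup $A_s\triangleleft\Gamma_s$ of index $\le j(r)$, and the specialization isomorphisms $\alpha_s$, the topological finite generation of $\pi_1^{p'}(X_{\bar\eta})$, and the Nikolov--Segal theorem together yield a finite Galois \'etale cover $h\colon X'\to X$ of order prime to $p$ for which every $\rho_s$ ($s\in U$) factors through $A_s$. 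The additional, purely formal observation is that this cover abelianizes the \emph{whole} stratified bundle: since $\EE'_s:=(h^*\EE)_s$ corresponds to a representation into the abelian group $A_s$, and $A_s$ is diagonalizable over the algebraically closed field $k$, each $\EE'_s$ is a direct sum of $r$ stratified line bundles for all $s\in U$.

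The substance of part~1 is to upgrade this fibrewise splitting to a splitting of $\EE'_{\bar\eta}$. Applying the moduli argument of Theorem~\ref{GK-etale} to each underlying bundle $E_i'$ on $(X')^{(i)}/S$ shows that every $(E_i')_{\bar\eta}$ is numerically flat of degree $0$ and splits as a direct sum of line bundles. The canonical decomposition of a polystable degree-$0$ bundle into its isotypic components $\bigoplus_\alpha L_{i,\alpha}^{\oplus m_{i,\alpha}}$ is functorial, hence preserved by relative Frobenius pull-back; consequently each $\sigma_i'\colon F^*_{(X')^{(i)}/S}(E_{i+1}')_{\bar\eta}\xrightarrow{\sim}(E_i')_{\bar\eta}$ must carry the $F^*$-image of an isotypic block onto an isotypic block. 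This block-diagonalizes the entire tower $\{\sigma_i'\}$ and writes $\EE'_{\bar\eta}$ as a direct sum of stratified sub-bundles, each of the form $\LL\otimes\M$ where $\LL$ is a stratified line bundle and $\M=\{\O^{\oplus m},g_i\}$ has trivial underlying bundle with constant transition matrices $g_i$. On a fibre $s\in U$ each such block is, by the previous paragraph, already a sum of stratified line bundles, so $\M_s$ is diagonalizable; the remaining task is to deduce that $\M_{\bar\eta}$ is diagonalizable as well, after which part~1 follows with $\pi_{\bar\eta}=h_{\bar\eta}$.

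For part~2, assume $k\neq\bar\FF_p$ and $U$ open. By part~1 it suffices to prove that each stratified line bundle in $\pi_{\bar\eta}^*\EE_{\bar\eta}$ has finite monodromy. Spreading the generic decomposition out over an open subset of a finite cover of $S$, I obtain relative stratified line bundles $\LL^{(j)}=\{L_i^{(j)},\sigma_i^{(j)}\}$ whose fibres over the dense open are direct summands of the $\EE'_s$ and therefore have finite monodromy of order prime to $p$; in particular each $(L_i^{(j)})_s$ is torsion of order prime to $p$. For each fixed level $i$ I would apply Theorem~\ref{GK-etale}~2) in rank $1$ to $L_i^{(j)}$ on $(X')^{(i)}/S$: since $k$ is not algebraic over its prime field and $U$ is open, the point $P_i:=(L_i^{(j)})_{\bar\eta}\in\Pic^0((X')^{(i)}_{\bar\eta})$ is \'etale trivializable, i.e.\ torsion of order prime to $p$. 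Writing $m_0$ for the order of $P_0$ and using the Frobenius--Verschiebung relations $V_iF_i=F_iV_i=p^i$ with $V_i(P_i)=P_0$, exactly as at the end of the proof of Theorem~\ref{stratified-line-bundles}, gives $p^im_0P_i=F_i(m_0P_0)=0$, whence $m_0P_i=0$ for all $i$ because each $P_i$ is prime-to-$p$ torsion. Thus $(L_i^{(j)})_{\bar\eta}^{\otimes m_0}\cong\O$ for all $i$, so $\LL^{(j)}_{\bar\eta}$ has finite monodromy, trivialized by the Kummer cover of degree $m_0$. Assembling these covers and composing with the prime-to-$p$ Galois cover $Y_{\bar\eta}\to X_{\bar\eta}$ of part~1 produces the factorization $Z_{\bar\eta}\to Y_{\bar\eta}\to X_{\bar\eta}$ and the finiteness of the monodromy of $\EE_{\bar\eta}$.

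I expect the main obstacle to be the final compatibility step of part~1, where the fibrewise diagonalizability of the residual block $\M$ must be promoted to generic diagonalizability. When the line bundles $L_{0,\alpha}$ appearing in $(E_0')_{\bar\eta}$ are pairwise non-isomorphic all the blocks have rank one and there is nothing to prove; the genuine difficulty is the repeated-factor case, where $\M_{\bar\eta}$ is a stratified bundle with trivial underlying bundle and transition matrices in $\GL_m(\bar K)$ that is known to be diagonalizable only after restriction to the closed fibres. Controlling this residual constant stratified structure---presumably by combining the fibrewise information with a rigidity or specialization argument for the relative family $\M$ over $S$---is the crux, and is precisely the point where the stratified problem goes beyond the bundle statement of Theorem~\ref{GK-etale}.
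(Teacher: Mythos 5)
Your architecture matches the paper's quite closely, with two deviations worth recording. For part 1), the paper applies Theorem~\ref{GK-etale}~1) level by level to each $E_i$ on $X^{(i)}/S$, observes that the degree of the resulting covers is bounded independently of $i$ (the bound only involves the Brauer--Feit constant $j(r)$ and $\pi_1^{p'}(X^{(i)}_{\bar\eta})$, which is insensitive to Frobenius twisting), invokes Lang--Serre to choose one compatible tower $Y^{(i)}_{\bar\eta}$, and then matches summands across levels by Krull--Schmidt; your variant (one cover $h\colon X'\to X$ over the family, pull back the whole stratified bundle, rerun the moduli argument at each level) is a legitimate reorganization of the same ingredients. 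For part 2), the paper simply reruns the proof of 1) with the cover $Z_{\bar\eta}$ of Theorem~\ref{GK-etale}~2) in place of $Y_{\bar\eta}$, whereas you reduce to rank one and redo the ending of Theorem~\ref{stratified-line-bundles} (prime-to-$p$ torsion of each $P_i$ from the rank-one case of Theorem~\ref{GK-etale}~2), then $F_iV_i=V_iF_i=p^i$ to get the uniform order $m_0$); that detour is correct and is arguably a more explicit justification than the paper's one-line proof of 2).

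The genuine gap is the step you flag as the crux, and the point you are missing is that it is not a crux at all: the residual block $\M=\{\sO^{\oplus m},g_i\}$ is automatically trivial. Everything here lives in ${\sf Strat}(X_{\bar\eta}/\bar K)$, so the transition maps are pulled back along the \emph{relative} Frobenius $F_{X^{(i)}_{\bar\eta}/\bar K}$, which is a morphism of $\bar K$-schemes; hence $F^*$ is a $\bar K$-linear functor. Since $X_{\bar\eta}$ is proper and geometrically connected, endomorphisms of a degree-zero line bundle are scalars in $\bar K$, and $F^*$ induces on them a $\bar K$-algebra endomorphism of $\bar K$, i.e.\ the identity. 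So the matrices $g_i\in\GL_m(\bar K)$ are \emph{not} Frobenius-twisted, and the condition for $\{h_i\}$ to define an isomorphism $\M\to\{\sO^{\oplus m},\id\}$, namely $F^*(h_{i+1})=h_ig_i$, reads $h_{i+1}=h_ig_i$; it is solved inductively from $h_0=\id$. Equivalently, ${\sf Strat}(\Spec\bar K/\bar K)$ is just ${\sf Vec}_{\bar K}$. (The difficulty you anticipate belongs to \emph{absolute} stratifications, where Frobenius raises scalars to $p$-th powers; even there Lang's theorem would dispose of it, but that is not the setting.) This is exactly the verification left implicit after the Krull--Schmidt reordering in the paper's proof. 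One further small correction: the isotypic decomposition is not literally preserved by $F^*$, because Verschiebung on $\Pic^0$ has nontrivial \'etale kernel, so non-isomorphic line bundles can become isomorphic after Frobenius pullback and blocks can merge as $i$ decreases; but a partition of $r$ can coarsen only finitely often, so the block pattern is constant for $i\gg 0$ and the untwisted-scalars argument above propagates the splitting through the finitely many remaining levels.
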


\begin{proof}
We prove 1). Let us first remark that the schemes $X^{(i)}_{\bar
\eta}$, $i\ge 0$, are all isomorphic (as schemes, not as
$k$-schemes). Therefore the relative Frobenius induces an
isomorphism on fundamental groups.

By the first part of  Theorem~\ref{GK-etale}  we know that there
exists a finite Galois \'etale covering $\pi _i: Y_{\bar \eta
,i}\to X^{(i)}_{\bar \eta}$ of degree prime to $p$ such that
$\pi_i^*(E _i)$ is a direct sum of line bundles $\oplus_1^r
L_{ij}$. Note that from the proof of Theorem~\ref{GK-etale} the
degree of $\pi _i$ depends only on $\pi _1^{p'}(X_{\bar
\eta}^{(i)}, \sigma ^{(i)}(\bar \eta))$ { and the Brauer-Feit
constant $j(r)$}, and therefore it can be bounded independently of
$i$. Using the Lang--Serre theorem (see \cite[{Th\'eor\`eme}
4]{LS}) we can therefore assume that $Y_{\bar \eta, i}=Y_{\bar
\eta}^{(i)}$, where $Y_{\bar \eta}=Y_{\bar\eta, 0}$. Now we know
that
$$\oplus_{j=1}^r L_{ij}\simeq
(F_{Y^{(i)}_{\bar\eta}/\bar\eta}^{i})^*\big(\oplus_{j'=1}^r
L_{i+1,j'}\big).$$ { By the Krull-Schmidt theorem, the set of
isomorphism classes of line bundles $\{L_{ij}\}_j $ is  the same
as the set of isomorphism classes of lines bundles which come by pull-back
$\{
(F_{Y^{(i)}_{\bar\eta}/\bar\eta}^{i})^*(L_{i+1,j'})  \}_{j'} .$
So we can reorder the indices $j'$  so that
$$(F_{Y^{(i)}_{\bar\eta}/\bar\eta}^{i})^*(L_{i+1,j}) \cong L_{i,j}.$$
This finishes the proof of 1).

To prove 2), we do the proof 1) replacing $Y_{\bar \eta}\to X_{\bar \eta}$ by
$Z_{\bar \eta}\to X_{\bar \eta}$ of Theorem~\ref{GK-etale} 2). This finishes
the proof of 2).

 }
\end{proof}

\medskip

\begin{Remarks}
\begin{itemize}
\item[1)] Case 2) of Theorem~\ref{stratified-vb} applied to a line bundle
extends Theorem~\ref{stratified-line-bundles}, where $S$ was
assumed to be projective, { $\Pic^0(X_s)$ reduced for all $s\in S$
closed,} $S'\subset S(k)$ dense, to the case when $S$ is not
necessarily projective and $S'\subset S(k)$ is open and dense, but
we have to assume that $k$ is uncountable.
\item[2)] If $Y_{\bar\eta}$ has a good projective model satisfying
assumptions of Theorem \ref{stratified-line-bundles} then it
follows that $\EE_{\bar \eta}$ has finite monodromy.
\end{itemize}
\end{Remarks}

\end{document}